\documentclass[a4paper,oneside,11pt]{amsart}

\usepackage{geometry}
\usepackage[english]{babel}
\usepackage{amsmath}
\usepackage[utf8]{inputenc}
\usepackage[T1]{fontenc}
 
\usepackage{amsthm}
\usepackage{amssymb}
\usepackage[all]{xy}
\usepackage[,colorlinks=true,citecolor= DarkBlue,linkcolor=Black]{hyperref}
\usepackage[svgnames,table]{xcolor}
\usepackage{graphicx}

\usepackage{pgf,tikz}\usepackage{mathrsfs}\usetikzlibrary{arrows}

\theoremstyle{definition}
\newtheorem{definition}{Definition}[section]

\theoremstyle{plain}
\newtheorem{theorem}{Theorem}
\newtheorem*{theorem*}{Theorem}
\newtheorem{proposition}[definition]{Proposition}
\newtheorem*{proposition*}{Proposition}
\newtheorem{lemma}[definition]{Lemma}
\newtheorem*{lemma*}{Lemma}

\newtheorem*{sublemma*}{Sub-lemma}
\newtheorem{corollary}[definition]{Corollary}

\newtheorem{conjecture}{Conjecture}

\newtheorem*{fact*}{Fact}

\newtheorem*{claim*}{Claim}

\theoremstyle{remark}
\newtheorem{remark}[definition]{Remark}

\newcommand\chech[1]{\mathaccent"7014{#1}}

\newcommand{\C}{\mathbf{C}}
\newcommand{\R}{\mathbf{R}}

\newcommand{\gl}{\mathfrak{gl}}
\renewcommand{\sl}{\mathfrak{sl}}

\newcommand{\heis}{\mathfrak{heis}}
\newcommand{\aff}{\mathfrak{aff}}

\renewcommand{\a}{\mathfrak{a}}
\newcommand{\g}{\mathfrak{g}}
\newcommand{\h}{\mathfrak{h}}    
\renewcommand{\k}{\mathfrak{k}}    
\newcommand{\m}{\mathfrak{m}}
\newcommand{\n}{\mathfrak{n}}
\newcommand{\so}{\mathfrak{so}}
\newcommand{\p}{\mathfrak{p}}
\newcommand{\gr}{\mathfrak{r}}
\newcommand{\s}{\mathfrak{s}}

\newcommand{\z}{\mathfrak{z}}

\renewcommand{\t}{\mathfrak{t}}

\renewcommand{\d}{\mathrm{d}}

\DeclareMathOperator{\Diff}{Diff}

\DeclareMathOperator{\Isom}{Isom}
\DeclareMathOperator{\Ad}{Ad}  
\DeclareMathOperator{\ad}{ad}

\DeclareMathOperator{\Span}{Span}
\DeclareMathOperator{\Aut}{Aut}

\DeclareMathOperator{\hol}{hol}

\DeclareMathOperator{\GL}{GL}

\DeclareMathOperator{\CO}{CO}

\DeclareMathOperator{\PO}{PO}

\DeclareMathOperator{\SO}{SO}

\DeclareMathOperator{\Ima}{Im}
\DeclareMathOperator{\Rea}{Re}

\DeclareMathOperator{\Rk}{Rk}
\DeclareMathOperator{\Conf}{Conf}

\DeclareMathOperator{\id}{id}
\DeclareMathOperator{\diag}{diag}

\DeclareMathOperator{\Heis}{Heis}

\renewcommand{\S}{\mathbf{S}}
\newcommand{\T}{\mathbf{T}}

\newcommand{\Ein}{\mathbf{Ein}}

\renewcommand{\epsilon}{\varepsilon}
\renewcommand{\geq}{\geqslant}
\renewcommand{\leq}{\leqslant}
\newcommand{\hx}{\hat{x}}

\renewcommand{\bar}{\overline}

\newcommand{\zn}{\mathfrak{z}(\mathfrak{n})}

\usepackage[percent]{overpic}

\setcounter{tocdepth}{1}

\title{Conformal actions of solvable Lie groups on closed Lorentzian manifolds}
\date{\today}
\author{Vincent Pecastaing}
\thanks{This work has been supported by the French government, through the UCA$^{\text{JEDI}}$ Investments in the Future project managed by the National Research Agency (ANR) with the reference number ANR-15-IDEX-01}
\address{Université Côte d’Azur, CNRS, LJAD, France}
\email{vincent.pecastaing@univ-cotedazur.fr}
\begin{document}

\maketitle

\begin{abstract}
We consider conformal actions of solvable Lie groups on closed Lorentzian manifolds.  With \cite{pecastaing_smooth}, in which we addressed similar questions for semi-simple Lie groups,  this work contributes to the understanding of the identity component $G$ of the conformal group of compact Lorentzian manifolds.  In the first part of the article, we prove that $G$ is inessential if and only if its nilradical is inessential.  In the second,  we assume the nilradical essential and establish conformal flatness of the metric on an open subset, under certain algebraic hypothesis on the solvable radical.  This is related to the Lorentzian Lichnerowicz conjecture.  Finally,  we consider the remaining situations where our methods do not apply to prove conformal flatness,  and conclude that for an essential compact Lorentzian $n$-manifold,  $n \geq 3$,  the radical of its conformal group admits a local embedding into $O(2,n)$.

\end{abstract}

\tableofcontents

\section{Introduction}

This work has two main motivations.  First,  we would like to obtain a classification,  up to local isomorphism, of conformal groups of compact Lorentzian manifolds,  that would generalize that of Adams-Stuck-Zeghib,  in which the identity component of their \textit{isometry} groups are classified.  Secondly,  the Lorentzian version of a conjecture of Lichnerowicz asserts that essential closed Lorentzian manifolds are conformally flat.  We aim to establish conformal flatness of these manifolds,  assuming that their conformal group satisfies extra algebraic assumption.  Let us first recall briefly these problems.

\subsection{General context}

A conformal diffeomorphism of a pseudo-Riemannian manifold $(M,g)$ is an element $f \in \Diff(M)$ for which there exists $\varphi \in \mathcal{C}^{\infty}(M)$,  $\varphi > 0$,  such that $f^*g = \varphi g$.  We denote by $[g]$ the conformal class of $g$ and we call $(M,[g])$ a pseudo-Riemannian conformal structure.  We also denote by $\Conf(M,[g])$ the group of conformal diffeomorphisms of $(M,[g])$.  

A natural question is to know if a conformal structure $(M,[g])$ admits conformal transformations which are \textit{not} isometries of any metric $g'$ in the conformal class.  

\begin{definition}
Let $(M,[g])$ be a  pseudo-Riemannian conformal structure.   A subgroup $H < \Conf(M,[g])$ is said to be \textit{inessential} if there exists $g' \in [g]$ such that $H <\Isom(M,g')$.  Otherwise,  it is said to be \textit{essential.}  If $\Conf(M,[g])$ is essential,  we say that $(M,[g])$ is essential.
\end{definition}

For positive-definite metrics,  a closed subgroup $H$ of conformal transformations is essential if and only if it acts non-properly on $M$.  So, for $M$ compact,  this is equivalent to the non-compactness of the conformal group.  These properties are no longer true for higher signatures,  so one of the first questions is how to characterize essentiality.

In dimension at least $3$,  a conformal class of pseudo-Riemannian metrics defines a \textit{rigid geometric structure},  be it in Gromov's sense \cite{gromov},  or in the sense that it is equivalent to a normalized \textit{Cartan geometry} modeled on $\Ein^{p,q}$,  the pseudo-Riemannian analogue of the Möbius sphere.  As a consequence,  $\Conf(M,[g])$ has the structure of a Lie transformation group (possibly non connected) and a natural problem is to determine which Lie groups arise in this way,  and on which geometry they can act.   We will concentrate here on conformal structures on compact manifolds and to the identity component of their conformal group.

Lichnerowicz conjectured in the 1960's that if a closed Riemannian manifold is essential,  then it is conformally equivalent to the round sphere.  More generally,  a ``Vague general conjecture'' of D'Ambra and Gromov (\cite{dambra_gromov}, §0.8) asserts that rigid geometric structures with large automorphism groups can be classified,  as it had been the case when Ferrand  and Obata  proved Lichnerowicz's conjecture,  first as formulated above,  and then its non-compact analogue \cite{ferrand71,ferrand96,obata71},  see also \cite{schoen} and the extension to the CR case.  Since then,  various people have been working on generalizations to other geometric structures, notably on a projective version of Lichnerowicz's conjecture (\textit{e.g.} \cite{matveev}).  It appeared that a pseudo-Riemannian analogue of Ferrand-Obata's theorem is not plausible \cite{frances_ferrand_obata_lorentz}.  One of the reasons is that it is deeply related to the ``rank $1$ nature'' of conformal Riemannian structures (\cite{frances_rang1}),  whereas other signatures are modeled on \textit{higher-rank parabolic geometries}.   In \cite{dambra_gromov},  the authors asked if general compact pseudo-Riemannian manifolds admitting an essential conformal group are conformally flat,  \textit{i.e.} locally conformally equivalent to the flat pseudo-Euclidean space $\R^{p,q}$.  Frances gave counter-examples to this question in all signatures of metric index at least $2$ (\cite{frances_counter_examples}).  The case of Lorentzian metrics remains however open,  and led to the following problem.

\begin{conjecture}
\label{conj:Lichnerowicz}
Let $(M,[g])$ be a compact,  conformal Lorentzian structure.  If it is essential, then it is conformally flat.
\end{conjecture}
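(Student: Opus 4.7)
The plan is to reduce Conjecture \ref{conj:Lichnerowicz} to an algebraic analysis of the identity component $G$ of $\Conf(M,[g])$ via its Levi decomposition $G = S \ltimes R$, where $S$ is a semi-simple Levi factor and $R$ is the solvable radical. Essentiality of $(M,[g])$ forces $G$ itself to be essential, and then at least one of $S$ or $R$ must act non-isometrically for every metric in $[g]$. If $S$ is essential, I would invoke the companion paper \cite{pecastaing_smooth} to conclude conformal flatness from the classification of essential semi-simple conformal actions on closed Lorentzian manifolds; that classification also constrains $S$ to embed locally into $\O(2,n)$. The bulk of the argument therefore concerns the case when $S$ is inessential while $R$ is essential.

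Assuming this, the first main theorem of the present paper reduces essentiality of the solvable radical to essentiality of its nilradical $\n$, and the second main theorem, under a suitable algebraic hypothesis on $R$, produces an open set $U \subset M$ on which the metric is conformally flat. Two tasks then remain. First, one must remove the algebraic hypothesis on $R$: the third main result of the paper gives, in the residual cases, a local embedding of $R$ into $\O(2,n)$, which reduces the problem to a finite list of Lie-algebraic models to be examined one by one, using the dynamics of each candidate on $\Ein^{1,n-1}$ to either rule it out on a compact manifold or force conformal flatness directly. Second, one must propagate conformal flatness from $U$ to all of $M$. For this I would exploit the $G$-invariance of the Weyl--Cotton obstruction together with the fact that the $G$-orbit of $U$ is open and, by essentiality plus a recurrence argument for the $\n$-action, dense in $M$; closedness of the vanishing locus of the obstruction then upgrades density to equality.

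The main obstacle, in my view, will be the first of these tasks. The residual algebraic configurations excluded by the hypotheses of the second main theorem are precisely those where the local embedding into $\O(2,n)$ still leaves room for nilpotent holonomy phenomena along orbits that are not obviously incompatible with compactness of $M$. Resolving them will likely require a delicate study of the normal Cartan connection along the orbits of an essential nilpotent flow, combined with an analysis of limits of rescaled return maps at recurrent points, with the additional difficulty that the ``rank one'' conformal collapse which drives the original Ferrand--Obata argument in Riemannian signature is unavailable in signature $(1,n-1)$, as explained in \cite{frances_ferrand_obata_lorentz} and \cite{frances_rang1}.
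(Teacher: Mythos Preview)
The statement you are attempting to prove is Conjecture~\ref{conj:Lichnerowicz}, and the paper does \emph{not} prove it: it is stated explicitly as an open problem, with only the $3$-dimensional real-analytic case and the real-analytic finite-$\pi_1$ case known. There is therefore no ``paper's own proof'' to compare against; what the paper proves are partial results (Theorems~\ref{thm:embedding}, \ref{thm:inessential_nilradical}, \ref{thm:nilpotent}) that support the conjecture but do not settle it. Your outline is, in broad strokes, a reasonable summary of the strategy the paper pursues as far as it goes, but both of your ``remaining tasks'' are genuinely open, and your sketch underestimates them.

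For the first task, the ``residual cases'' are precisely those described in Proposition~\ref{prop:cas-defavorables}: $\gr \simeq \a \ltimes_\rho \n$ with $\n$ abelian of dimension at most $n-1$, $\rho$ semi-simple, and all non-imaginary real parts of the weights equal. In these cases the paper does \emph{not} establish conformal flatness of any open set; it only constructs the embedding $\gr \hookrightarrow \so(2,n)$ by an explicit matrix realization (Section~\ref{s:proof_embedding}). Your proposal to ``examine one by one'' the resulting models on $\Ein^{1,n-1}$ and ``rule them out on a compact manifold or force conformal flatness directly'' is not a finite check: these are continuous families of solvable subalgebras, and nothing in the paper or the cited literature shows that their essential actions on compact Lorentzian manifolds must be conformally flat. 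This is exactly the gap that keeps the conjecture open.

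For the second task, your propagation argument is also a genuine gap. The paper produces conformal flatness only on \emph{some} open subset $U$ (see Propositions~\ref{prop:holonomy_uniptotent} and~\ref{prop:discrete_holonomy}), with no claim that $G\cdot U$ is dense. Your appeal to ``essentiality plus a recurrence argument for the $\n$-action'' does not give density: an essential action can have proper closed invariant sets, and the $N$-orbits in $K$ constructed in Section~\ref{ss:prop_abelian} are in fact closed. The only regime in which the paper upgrades local conformal flatness to global is the real-analytic one (see the corollary after Theorem~\ref{thm:nilpotent}), precisely because analytic continuation replaces the missing density argument. In the smooth category this step is not available.
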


Recently,  a proof for 3-dimensional,  real-analytic,  compact Lorentzian manifolds has been established in \cite{frances_melnick_lichne-anal3d},  and the main result of \cite{melnick_pecastaing} implies that it is also true for compact,  real-analytic Lorentzian manifolds with finite fundamental group.  

Apart from these general results,  various works have been dealing with compact pseudo-Riemannian manifolds with ``large'' conformal groups.   
It appeared that if a semi-simple Lie group $S$ of non-compact type acts conformally on  a closed pseudo-Riemannian manifold of signature $(p,q)$,  then $\Rk_{\R}(S) \leq \min(p,q)+1$ \cite{zimmer87} and if equality holds,  then the manifold is a quotient of the universal cover of $\Ein^{p,q}$ by a cyclic group \cite{bader_nevo,frances_zeghib}.  In \cite{pecastaing_rang1},  the opposite situation of actions by rank $1$ simple Lie groups is considered and similar conclusions are obtained when the metric index is minimal.  For the Lorentzian signature,  it is proved in \cite{pecastaing_smooth} that if a non-compact semi-simple Lie groups acts conformally essentially on a closed Lorentzian manifold,  then it is conformally flat (see §\ref{ss:intro_essentiality} below). 

In another direction,  it is natural to consider the ``solvable part'' of the conformal group of these manifolds and to derive geometric or dynamical information.  Frances and Melnick proved in \cite{frances_melnick_nilpotent} that if a nilpotent Lie group $N$, with nilpotence degree $k$, acts conformally on a closed pseudo-Riemannian manifold of signature $(p,q)$,  then $k \leq 2\min(p,q)+1$ and that if equality holds,  the manifold is again a quotient of the universal cover of $\Ein^{p,q}$ by a cyclic group.  Apart from this result,  to the best of our knowledge,  most results about solvable Lie group actions in pseudo-Riemannian geometry concern \textit{isometric actions} and not much is known about their conformal essential actions,  even in Lorentzian signature (see nonetheless \cite{belraouti_deffaf_raffed_zeghib} in the general homogeneous case and the announced results therein).  

\subsection{Embedding of the radical of the conformal group}

The present work focuses on the case of solvable Lie group actions on compact Lorentzian manifolds.  We obtain an optimal obstruction for an essential conformal action of a solvable Lie group on a closed Lorentzian manifold:

\begin{theorem}
\label{thm:embedding}
Let $(M^n,[g])$, $n \geq 3$, be a compact manifold endowed with a Lorentzian conformal structure, and let $R$ be a connected,  solvable Lie subgroup of $\Conf(M,[g])$.  If $R$ is essential,  then there exists a Lie algebra embedding $\gr \hookrightarrow \so(2,n)$. 
\end{theorem}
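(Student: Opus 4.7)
The plan is to use the Cartan-geometric description of Lorentzian conformal structures. For $n\geq 3$, the structure $(M,[g])$ defines a normalized Cartan geometry $(\pi\colon\hat{M}\to M,\omega)$ modeled on the Einstein universe $\Ein^{1,n-1}=\PO(2,n)/P$, with Cartan connection $\omega\in\Omega^{1}(\hat{M},\so(2,n))$ and curvature $\Omega$. The conformal $R$-action lifts canonically to a free, $\omega$-preserving action on the principal $P$-bundle $\hat{M}$. At any $\hat{x}\in\hat{M}$, the evaluation map
\[
\iota_{\hat{x}}\colon\gr\to\so(2,n),\qquad X\mapsto\omega_{\hat{x}}(X^{\dagger}_{\hat{x}}),
\]
where $X^{\dagger}$ denotes the fundamental vector field of the lifted action on $\hat{M}$, is linear. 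Freeness of this lifted action, a standard consequence of the effectiveness of the model, forces $\iota_{\hat{x}}$ to be injective.

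The goal is to find $\hat{x}$ at which $\iota_{\hat{x}}$ is moreover a Lie algebra homomorphism. Applying the Cartan structure equation $\d\omega+\tfrac{1}{2}[\omega,\omega]=\Omega$ to a pair of fundamental vector fields yields
\[
\iota_{\hat{x}}\bigl([X,Y]\bigr)-[\iota_{\hat{x}}(X),\iota_{\hat{x}}(Y)]=\Omega_{\hat{x}}(X^{\dagger},Y^{\dagger}),
\]
so $\iota_{\hat{x}}$ is a Lie algebra embedding as soon as $\Omega_{\hat{x}}$ vanishes on the plane spanned by each pair of $R$-orbit directions. The proof therefore reduces to producing one point $\hat{x}\in\hat{M}$ at which $\Omega_{\hat{x}}(X^{\dagger},Y^{\dagger})=0$ for all $X,Y\in\gr$.

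By the first main theorem of the paper, essentiality of $R$ implies essentiality of its nilradical, so I may assume this throughout. There are then two regimes, corresponding to the dichotomy announced in the abstract. If the algebraic hypothesis of the second main theorem holds, then $(M,[g])$ is conformally flat on a nonempty open $U\subset M$, giving $\Omega\equiv 0$ on $\pi^{-1}(U)$; any $\hat{x}$ over $U$ yields the embedding. In the remaining cases, one works directly with the dynamics of an essential element $f\in R$ on the Cartan bundle: non-properness of $f$, combined with the boundedness of the curvature tensor on compact $M$, should produce a holonomy sequence in $P$ whose adjoint action on $\so(2,n)$ contracts the relevant subspace, and iterating this contraction along a recurrent orbit forces $\Omega$ to vanish in the needed directions at an accumulation point $\hat{x}$.

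The main obstacle is this second regime. Without the conformal flatness conclusion of the second main theorem to fall back on, constructing a single point where $\Omega$ vanishes on all $R$-orbit directions requires a case-by-case inspection of the possible solvable Lie algebras $\gr$ and of their nilradicals' faithful representations, guided by Frances-Melnick's bound $k\leq 3$ on the nilpotence degree of an essential Lorentzian nilpotent action. Showing that each residual configuration still produces the required vanishing of the curvature along $\gr$-orbit directions is the heart of the argument.
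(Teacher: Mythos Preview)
Your reduction in the first regime is correct and matches the paper: if some open subset is conformally flat, then at any $\hat{x}$ over that subset the curvature vanishes and $\iota_{\hat{x}}$ gives the embedding (up to the sign in Lemma~\ref{lem:iota_crochet}, which makes $\iota_{\hat{x}}$ an anti-homomorphism; this is harmless).

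The gap is in the second regime. You propose to manufacture, via holonomy sequences and recurrence, a single point $\hat{x}$ at which $\Omega_{\hat{x}}$ vanishes on all pairs of $R$-directions. This is \emph{not} what the paper does, and your sketch does not supply the mechanism: a contracting holonomy sequence may kill curvature in some directions but there is no reason it kills it on the full $\gr\wedge\gr$, especially once $\dim\gr$ is large. More importantly, the paper never produces such a point in the residual cases. Instead, it shows that when no open subset is conformally flat, the algebra $\gr$ is forced into a very short list (Proposition~\ref{prop:cas-defavorables}): the nilradical $\n$ is \emph{abelian} of dimension $\leq n-1$, $\gr\simeq\a\ltimes_{\rho}\n$ with $\a$ abelian and $\rho$ faithful and semisimple, and the real parts of the weights of $\rho$ take at most one nonzero value. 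This is proved via Theorem~\ref{thm:inessential_nilradical}, Theorem~\ref{thm:nilpotent}, Propositions~\ref{prop:rho_semisimple} and~\ref{prop:split_exact_sequence}, Lemma~\ref{lem:sol_like}, and Proposition~\ref{prop:abelian_large}. Once $\gr$ is pinned down algebraically, the embedding into $\so(2,n)$ is written \emph{by hand}, with explicit block matrices placing $\n$ inside $\g_{\alpha}\oplus\g_{\beta}$ (Section~\ref{s:proof_embedding}); the Cartan connection plays no further role at that stage. Your mention of the Frances--Melnick bound $k\leq 3$ is also off target here: in the non-flat regime the nilradical is already abelian, so the relevant structural constraint is much sharper than $k\leq 3$.

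In short, the paper's second-regime argument is ``classify, then embed explicitly'', not ``find a curvature-vanishing point''. Your proposal leaves the second regime as a hope rather than a proof, and diverges from the actual strategy.
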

The  projective model of the Lorentzian Einstein Universe $\Ein^{1,n-1}$ is defined as the smooth quadric $\mathbb{P}(\{Q_{2,n}=0\}) \subset \R P^{n+1}$,  where $Q_{2,n}$ is a quadratic form of signature $(2,n)$.  It naturally comes with a conformally flat,  conformal class of Lorentzian metrics and its conformal group is $\PO(2,n)$.  So,  the converse statement is immediate: any Lie subgroup of $\PO(2,n)$ acts faithfully on (at least) one compact Lorentzian manifold. 

As a consequence of Liouville's theorem,  any Lie algebra of conformal vector fields of a conformally flat Lorentzian manifold embeds into $\so(2,n)$ (see also Lemma \ref{lem:iota_crochet}).  Therefore,  Theorem \ref{thm:embedding} supports Conjecture \ref{conj:Lichnerowicz},  because if true,  the latter would imply that the identity component of $\Conf(M,[g])$ locally embeds into $O(2,n)$.

Suppose that $\Conf(M,[g])_0$ is essential.  In \cite{pecastaing_smooth},  we proved that if the Levi factor of $\Conf(M,[g])_0$ is non-compact,  then the metric is conformally flat,  and in particular the whole identity component of the conformal group locally embeds into $O(2,n)$.  Applying Theorem \ref{thm:embedding} to the solvable radical $R$ of $\Conf(M,[g])_0$,  and Theorem \ref{thm:inessential_nilradical} below,  we conclude that for any closed Lorentzian manifold $(M,g)$,  if $\Conf(M,[g])_0$ is essential,  then,  up to local isomorphism,  it is a semi-direct product of a compact semi-simple Lie group with an immersed subgroup of $O(2,n)$.  

This raises the following question: Conversely,  which solvable Lie subgroups of $O(2,n)$ can be realized as the radical of the conformal group of some closed Lorentzian $n$-manifold? A similar problem was treated in \cite{adams_stuckII,zeghib98} in the classification of isometry groups of closed Lorentzian manifolds,  and turned out to be quite delicate.  We leave this question for further investigation ; it would ultimately be interesting to reach a statement comparable to Adams-Stuck-Zeghib's classification result,  that we recall below.

\begin{theorem*}[\cite{adams_stuck,adams_stuckII,zeghib98,zeghib_espaces_temps_homogenes}]
\label{thm:ASZ}
Let $(M,g)$ be a compact Lorentzian manifold and let $G  = \Isom(M,g)_0$ denote the identity component of its isometry group. Then,  its Lie algebra splits into a direct product $\g = \s \oplus \k \oplus \a$ where $\k$ is the Lie algebra of a compact semi-simple Lie group, $\a$ is abelian and $\s$ is in the following list: 
\begin{enumerate}
\item $\sl_2(\R)$
\item $\heis(2n+1)$, $n \geq 1$,
\item Oscillator algebras, \textit{i.e.} certain solvable extensions $\R \ltimes \heis(2n+1)$, $n \geq 1$,
\item $\{0\}$.
\end{enumerate}
Conversely,  for any such Lie algebra $\g$, there exists a compact Lorentzian manifold whose isometry group admits $\g$ as Lie algebra.
\end{theorem*}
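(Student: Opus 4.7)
The plan is to analyze the Killing algebra $\g = \Lie(G)$ by coupling its algebraic Levi decomposition to the geometric constraints imposed by the finite Lorentzian volume on $M$. Two inputs do most of the work: Zimmer's theorem on non-compact semisimple Lie groups preserving a finite volume, and the signature-specific fact that a Killing vector field $X \in \g$ for which $\ad X$ is nilpotent must satisfy $g(X,X) \leq 0$ everywhere on $M$. The latter follows by averaging $g(X,X)$ against the Liouville measure and exploiting that polynomials in the nilpotent endomorphism $\ad X$ have vanishing trace on the orthonormal frame bundle.

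First I would write $\g = \s' \ltimes \gr$ and decompose the Levi factor $\s' = \s_{nc} \oplus \k$ into its non-compact and compact semisimple ideals. Applied to the isometric volume-preserving action of a cover of the corresponding group, Zimmer's theorem produces a measurable equivariant embedding of the adjoint representation of $\s_{nc}$ into $\so(1,n-1)$, which forces $\Rk_{\R}(\s_{nc}) \leq 1$; a direct inspection of real rank-one simple Lie algebras shows that only $\sl_2(\R)$ admits an orthogonal representation of Lorentzian type containing its own adjoint, so $\s_{nc}$ is either zero or isomorphic to $\sl_2(\R)$.

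Next I would analyze the nilradical $\n$ of $\gr$. Applied to single elements of $\n$ and to brackets $[X,Y]$ with $X,Y \in \n$, the nonpositivity of $g(\cdot,\cdot)$ forces the derived algebra $[\n,\n]$ to be totally isotropic; combined with the observation that central Killing fields have constant norm and must therefore be causal, and with the fact that the $\gr/\n$-action preserves causal type, one shows that $\n$ is at most two-step nilpotent with one-dimensional derived algebra, so $\n$ is either abelian or isomorphic to some $\heis(2n+1)$. In the Heisenberg case the outer action of $\gr/\n$ preserves both the symplectic form on $\n/\z(\n)$ induced by the Lie bracket and the Lorentzian causal type of individual elements, leaving only the one-parameter oscillator extensions of item (3). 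If $\n$ is abelian, then so is $\gr$, and it is entirely absorbed into $\a$.

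Finally I would upgrade the Levi semidirect splitting to a genuine direct product: the compact ideal $\k$ acts by derivations on $\s$ and on $\gr$, and complete reducibility of $\k$-representations together with the Lorentzian orthogonality constraint forces all cross-brackets to vanish, yielding $\g = \s \oplus \k \oplus \a$. For the converse, bi-invariant Lorentzian metrics on cocompact quotients of $\widetilde{\PSL}_2(\R)$ realize case (1); standard left-invariant Lorentzian metrics on compact Heisenberg and twisted Heisenberg nilmanifolds realize (2) and (3); bi-invariant metrics on compact semisimple Lie groups realize $\k$; flat tori realize $\a$; and Lorentzian products combine these. The main obstacle in this program is the passage from the infinitesimal sign condition $g(X,X) \leq 0$ on nilpotent Killing fields to the rigid structural conclusion that $\n$ is abelian or Heisenberg; this requires a careful analysis of the null foliation defined by the orbits of the center of $\n$, which is the technical core of Zeghib's contribution to the classification.
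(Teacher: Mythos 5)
A preliminary remark: the paper does not prove this statement. It is the Adams--Stuck--Zeghib classification, quoted as background with attribution to \cite{adams_stuck,adams_stuckII,zeghib98,zeghib_espaces_temps_homogenes}. So your proposal can only be measured against the original arguments, and against the closely related computations the paper does carry out in Section \ref{s:inessential}, which adapt exactly these isometric-case techniques to the conformal setting.

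Your architecture (Levi decomposition, Zimmer's theorem for the non-compact Levi factor, analysis of the nilradical through the functions $g(X,X)$, identification of the admissible extensions, explicit realizations) is indeed the skeleton of Adams--Stuck's proof, and your Zimmer step is essentially correct. But the lemma you say does ``most of the work'' is false. On the flat Lorentzian torus $\R^{1,n-1}/\Z^n$ every translation field $X$ is Killing with $\ad X=0$ nilpotent, yet a spacelike translation has $g(X,X)>0$ everywhere; for the same reason ``central Killing fields have constant norm and must therefore be causal'' fails (centrality makes $g(X,X)$ constant along $G$-orbits only, and central fields on the torus can be spacelike). No averaging against the Liouville measure produces a pointwise sign condition of this kind. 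What is true, and what the actual proofs use, is a boundedness/recurrence argument on brackets: if $[X,Y]=Z$ and $[X,Z]=0$, then $(\phi_X^t)_*Y = Y - tZ$, so isometry gives
\begin{align*}
g_x(Y,Y) = g_{\phi_X^t(x)}(Y,Y) - 2t\, g_{\phi_X^t(x)}(Y,Z) + t^2\, g_{\phi_X^t(x)}(Z,Z),
\end{align*}
and compactness of $M$ together with recurrence of the flow forces $g(Z,Z)\equiv 0$, then $g(Y,Z)\equiv 0$; combined with Lemma \ref{lem:zero_kilingfield} and Lemma \ref{lem:collinearVectorFields} this yields $\dim[\n,\n]\leq 1$, i.e.\ $\n$ abelian or Heisenberg. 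This is precisely the mechanism behind Lemma \ref{lem:heis3_triple_inessential} and Proposition \ref{prop:inessential_nonabelian} of the paper (in conformal form, with the distortion cocycle $\lambda$ in place of the constant $1$). The same mechanism---not preservation of ``causal type''---is what excludes non-imaginary eigenvalues of $\ad X|_{\n}$, and that exclusion is exactly what your two unproved assertions require: that $\gr$ is abelian when $\n$ is (i.e.\ ruling out $\aff(\R)$-type radicals, where $[X,Y]=Y$), and that a Heisenberg nilradical admits only one-parameter, rotation-type (oscillator) extensions. Compare Lemma \ref{lem:purely_imaginary}, which is the conformal analogue of this step; as your own closing sentence concedes, this passage is the technical core, and your proposal does not supply it.

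A secondary but genuine error concerns the converse for case (2): $\heis(2n+1)$ admits no bi-invariant pseudo-Riemannian metric (the center is forced into the radical of any $\ad$-invariant form), and for a left-invariant Lorentzian metric on $H$ the metric descends to the compact quotient while the descended right $H$-action is \emph{not} isometric---that would again require bi-invariance. Realizing a Heisenberg (and not oscillator) isometry algebra, and checking that the isometry algebra is not strictly larger than the prescribed $\g$, is the delicate content of \cite{adams_stuckII,zeghib98}; ``standard left-invariant metrics on nilmanifolds'' does not settle it.
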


\subsection{Essentiality of the nilradical}
\label{ss:intro_essentiality}

Let $(M,g)$ be a pseudo-Riemannian manifold.  Given an essential subgroup $G < \Conf(M,[g])$,  and a non relatively compact subgroup $H < G$,  a natural question is to know if $H$ is also essential.  Or equivalently: if $H$ preserves a metric in the conformal class,  is it also the case for all of $G$?

In our situation,  we obtained previously a positive answer when $H$ is semi-simple and non-compact

\begin{theorem*}[\cite{pecastaing_smooth}]
Let $(M,g)$ be a closed Lorentzian manifold and $S$ be a non-compact semi-simple Lie group. Suppose that $S$ acts conformally on $M$ with discrete kernel.
\begin{itemize}
\item If $S$ is inessential,  then so is all of $\Conf(M,g)_0$.
\item If $S$ is essential, then $(M,[g])$ is conformally flat.
\end{itemize}
\end{theorem*}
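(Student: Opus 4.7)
My plan is to analyze the two cases separately, working throughout with the canonical normal Cartan geometry $(\hat M \to M, \omega)$ of type $(\PO(2,n), P)$ attached to the conformal structure; every element of $\Conf(M,[g])_0$ lifts to a bundle automorphism, and each one-parameter subgroup of $S$ gives a complete flow on $\hat M$ whose local dynamics at a fixed point is controlled by the isotropy representation in $P$.

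For the essential case, I would first apply Zimmer's embedding theorem (together with its refinements in the spirit of Bader--Frances--Melnick) to obtain, at a generic point of $M$, a Lie algebra embedding of a suitable subalgebra of $\s$ into $\so(2,n)$ compatible with the Killing form; in Lorentzian signature this forces $\Rk_{\R}(S) \leq 2$. When $\Rk_{\R}(S) = 2$, the rigidity results of Bader--Nevo and Frances--Zeghib identify $M$ as a finite quotient of the universal cover of $\Ein^{1,n-1}$ and give conformal flatness directly. The heart of the argument is the rank-one case, where the strategy is: produce a point $x_0 \in M$ fixed by an $\R$-split one-parameter subgroup $\{a_t\} \subset S$; force the conformal holonomy at $x_0$ to be an $\R$-diagonalizable element of $P$ with a nontrivial contracting eigenvalue---this is where essentiality enters, via failure of properness of the $\{a_t\}$-action on $M$---and invoke the local stability theory for such hyperbolic holonomies in conformal Cartan geometries to conclude that the Weyl tensor (or the Cotton tensor in dimension $3$) vanishes on a neighborhood of $x_0$. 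Finally, saturating this neighborhood by the $\Conf(M,[g])_0$-action and using continuity of the Weyl tensor yields conformal flatness on all of $M$.

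For the inessential case, the hypothesis gives $S \subset \Isom(M,g')$ for some $g' \in [g]$, and the Adams--Stuck--Zeghib classification forces $\s \simeq \sl_2(\R)$ with a rigidly constrained orbit geometry (timelike orbits on which the induced metric coincides, up to a positive function, with the Killing form; integrable orthogonal distribution). This rigidity pins down a canonical conformal representative $g'' \in [g]$ defined intrinsically from the orbit data of the $S$-action. Because any $f \in \Conf(M,[g])_0$ conjugates $S$ into another Levi factor, and any two Levi factors of $\Conf(M,[g])_0$ are conjugate by elements of the nilradical that preserve the infinitesimal $S$-orbit geometry, the canonical recipe applied to $f^{-1}Sf$ returns $g''$ itself; hence $g''$ is $\Conf(M,[g])_0$-invariant.

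The principal obstacle, as expected, is the rank-one essential case: both producing the fixed point $x_0$ with the correct contracting holonomy---a non-properness statement on the Cartan bundle requiring a careful extraction of a divergent sequence in $P \backslash \PO(2,n)$---and propagating the local vanishing of the Weyl tensor to all of $M$ in the absence of real-analyticity are the two delicate dynamical steps that account for the bulk of the technical work.
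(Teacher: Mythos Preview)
The theorem you are attempting to prove is not proved in the present paper. It is quoted, with the citation \cite{pecastaing_smooth}, as a prior result of the author, and the paper uses it as a black box (notably in Section~\ref{ss:proof_inessentialNilradical} and Section~\ref{ss:remarkLeviFactor}). There is therefore no proof in this paper to compare your proposal against.

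That said, your high-level outline is broadly in the spirit of the arguments in the cited work. The reduction of the essential case to rank at most~$2$ via Zimmer's embedding theorem, the use of Bader--Nevo and Frances--Zeghib at the critical rank, and the rank-one analysis via fixed points with contracting holonomy and vanishing of the Weyl/Cotton tensor on an open set, are indeed the main ingredients there. Likewise, for the inessential case, the Adams--Stuck--Zeghib classification forcing $\s \simeq \sl_2(\R)$ and the construction of a canonical invariant metric from the orbit geometry is the correct strategy; in \cite{pecastaing_smooth} the invariant metric is built concretely as $g/g(X,X)$ for a suitable $X$ in a Cartan subalgebra of $\sl_2(\R)$, using relations of the type recalled here in Lemma~\ref{lem:affine_inessential} and Section~\ref{ss:remarkLeviFactor}. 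Your sketch of the inessential case via conjugacy of Levi factors is vaguer than what is actually done, but points in the right direction.

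You correctly identify the two genuine difficulties in the rank-one essential case: producing the fixed point with the right holonomy, and propagating conformal flatness from a neighborhood to all of $M$ without analyticity. Both are handled in \cite{pecastaing_smooth} by specific dynamical arguments that your sketch does not supply; in particular, propagation uses the dynamics of the full $S$-action and minimality properties of certain invariant sets, not merely ``saturating by $\Conf(M,[g])_0$ and continuity,'' which by itself would only give flatness on an $S$-invariant open set.
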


We would like to get a similar result in the setting of solvable Lie groups actions.  We prove here:

\begin{theorem}
\label{thm:inessential_nilradical}
Let $(M,g)$ be a closed Lorentzian manifold and let $R$ be a solvable Lie subgroup of $\Conf(M,[g])$.  Let $N$ be the nilradical of $R$.  If $N$ is inessential,  then so is $R$.
\end{theorem}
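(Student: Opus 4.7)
The strategy is to fix an $N$-invariant metric $g'\in[g]$ (provided by the hypothesis) and look for an $R$-invariant metric of the form $g''=e^{2\sigma}g'$. Writing $\L_X g'=2\phi_X g'$ for the conformal factor of $X\in\gr$ relative to $g'$, the hypothesis gives $\phi_X\equiv 0$ on $\n$. Since $g'$ is already $N$-invariant, $g''$ remains $N$-invariant iff $\sigma\in C^\infty(M)^N$, and $R$-invariance of $g''$ is then equivalent to the coboundary equation $X(\sigma)=-\phi_X$ for every $X\in\gr$. So the entire task reduces to producing such a $\sigma\in C^\infty(M)^N$.

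The first step is cocycle bookkeeping. The identity $\L_{[X,Y]}=[\L_X,\L_Y]$ applied to $g'$ gives $\phi_{[X,Y]}=X(\phi_Y)-Y(\phi_X)$, so $X\mapsto\phi_X$ is a Lie algebra 1-cocycle of $\gr$ with values in $C^\infty(M)$. Plugging $Y\in\n$ in, the hypothesis gives $\phi_Y=0$ and, since $\n$ is an ideal, $[X,Y]\in\n$ so $\phi_{[X,Y]}=0$; hence $Y(\phi_X)=0$, i.e.\ each $\phi_X$ is $N$-invariant. The cocycle therefore descends to $\bar\phi:\gr/\n\to C^\infty(M)^N$, and $\gr/\n$ is abelian because $[\gr,\gr]\subseteq\n$. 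On a basis $X_1,\dots,X_k$ of $\gr/\n$ the cocycle identity reads $X_i(\phi_{X_j})=X_j(\phi_{X_i})$, which is exactly the integrability condition for the commuting PDE system $X_i(\sigma)=-\phi_{X_i}$.

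The heart of the argument is to actually solve this system in $C^\infty(M)^N$, i.e.\ to show that $\bar\phi$ is a coboundary. I would attempt this by combining three ingredients: (a) the $N$-invariant volume $\mathrm{vol}_{g'}$, which satisfies $\L_X\mathrm{vol}_{g'}=n\phi_X\mathrm{vol}_{g'}$ and hence by Stokes yields $\int_M\phi_X\,\d\mathrm{vol}_{g'}=0$, together with further constraints on $X$-invariant distributions; (b) the Adams--Stuck--Zeghib structure theorem applied to $\Isom(M,g')_0\supseteq N$, which forces the nilpotent part of that identity component to be abelian or Heisenberg and rigidifies the orbit geometry of $N$; (c) the amenability of the connected abelian group $R/N$, allowing averaging via a Markov--Kakutani type fixed-point argument on the convex space of $N$-invariant metrics in $[g]$, on which $R/N$ acts by affine maps whose translation cocycle is essentially $\bar\phi$. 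A natural implementation is to induct along a flag $\n=\n_0\triangleleft\n_1\triangleleft\cdots\triangleleft\n_s=\gr$ of ideals with one-dimensional quotients, enlarging the inessential ideal one dimension at a time.

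The main obstacle I anticipate is precisely this last step. On a generic compact manifold the equation $X(\sigma)=-\phi_X$ is obstructed by any $X$-invariant distribution pairing nontrivially with $\phi_X$, and ruling these out requires genuinely using the Lorentzian rigidity of $(M,g')$ — the isometric dynamics of $N$, its Adams--Stuck--Zeghib structure, and the fact that $R$ normalises $N$ so that $R/N$-orbits in the scale direction are controlled by $\bar\phi$ and by the recurrence of the $N$-action. Once the cohomology vanishes in $C^\infty(M)^N$, the resulting $\sigma$ produces an $R$-invariant metric $e^{2\sigma}g'\in[g]$, establishing inessentiality of $R$.
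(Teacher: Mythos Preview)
Your cocycle bookkeeping is correct: the reduction to finding $\sigma\in C^\infty(M)^N$ with $X(\sigma)=-\phi_X$ for $X$ ranging over representatives of a basis of $\gr/\n$ is a clean reformulation, and the integrability condition you wrote down is the right one. But the ``heart of the argument'' you sketch is not a proof, and none of the three ingredients you list closes the gap. The volume constraint $\int_M\phi_X\,\d\mathrm{vol}_{g'}=0$ is a single necessary condition, not sufficient; invoking Adams--Stuck--Zeghib for $\Isom(M,g')_0$ tells you about $N$ but gives no direct handle on the cohomological equation along the transverse $R/N$-directions; and the amenability/Markov--Kakutani idea fails because the set of $N$-invariant metrics in $[g]$ is a non-compact affine space (parametrised by $C^\infty(M)^N$), so there is no fixed-point theorem available. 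The equation $X(\sigma)=-\phi_X$ for a general smooth flow on a compact manifold is a genuine Liv\v{s}ic-type obstruction and cannot be solved by soft averaging.

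The paper bypasses this cohomological problem entirely: instead of solving for $\sigma$ abstractly, it manufactures the renormalisation function explicitly as the pseudo-norm $g'(X_0,X_0)$ of a well-chosen conformal vector field $X_0$. The work then shifts to proving that $g'(X_0,X_0)>0$ everywhere and that it transforms correctly under every one-parameter subgroup of $R$. This is done case by case. When $\n$ is abelian and $\gr\neq\n$, one first shows (using that isotropic Killing fields with a zero vanish identically) that $\dim[X,\n]=1$ for every $X\notin\n$, forcing $\gr\simeq\aff(\R)\oplus\R^k$; then for any pair $[X,Y]=Y$ one proves $g'(X,X)>0$ and $g'/g'(X,X)$ is $R$-invariant. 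When $\n$ is non-abelian, the key step is to show that the induced representation of $\gr/\n$ on $\n/\zn$ has purely imaginary spectrum (this uses the Lorentzian geometry: a real eigenvalue would force the conformal distortion of some flow to blow up, contradicting boundedness of $g'(X_0,X_0)$); one then takes $X_0\in\n\setminus\zn$ projecting to a common eigenvector, and either $g'(X_0,X_0)$ or a circular average $\int_S g'(U,U)\,\d U$ gives the renormalisation. The commutation relations in $\gr$, combined with the orthogonality $g'(\n,Z)=0$ for $Z$ spanning $[\n,\n]$, are what make these functions transform as coboundaries. None of your proposed ingredients produces this; the missing idea is that the Lorentzian signature forces specific vector fields to be everywhere space-like, and their norm \emph{is} the conformal factor.
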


Furthermore,  when $N$ is non-abelian,  then its essentiality is characterized by that of $N_k$,  the last non-zero term of its lower-central series (see Theorem \ref{thm:nilpotent} below).

\begin{corollary}
\label{cor:inessential_nilradical}
Let $G$ the identity component of $\Conf(M,[g])$. Let $R \triangleleft G$ be its solvable radical and let $N \triangleleft \,  R$ be the nilradical.  If $G/R$ is compact,  then $G$ is essential if and only if $N$ is essential.
\end{corollary}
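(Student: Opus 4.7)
One direction is immediate from set inclusion: if $G$ preserves some metric $g'' \in [g]$, then so does the subgroup $N$, so $N$ essential forces $G$ essential. The content is in the converse, which I would prove contrapositively. Assuming $N$ inessential, Theorem \ref{thm:inessential_nilradical} promotes this to inessentiality of the whole radical $R$, furnishing a metric $g' \in [g]$ preserved by $R$. The remaining task is to upgrade $g'$ to a $G$-invariant metric in $[g]$, which I would do by averaging over a compact subgroup of $G$ projecting onto $G/R$.

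To produce such a subgroup, I would take a Levi complement $\k$ of $\gr$ in $\g$. By hypothesis $G/R$ is compact semi-simple, so $\k \cong \Lie(G/R)$ is a compact semi-simple Lie algebra, and by Weyl's theorem its simply connected integral group is compact; consequently any connected Lie subgroup $K < G$ with Lie algebra $\k$ is itself compact. The induced projection $K \to G/R$ is a local isomorphism, hence an open map, with compact image; being both open and closed in the connected group $G/R$, the image is all of $G/R$, so $G = KR$.

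With a Haar probability measure $\mu$ on $K$, set $g'' := \int_K k^* g' \, d\mu(k)$. Since $K$ acts conformally on $g'$, each pullback satisfies $k^* g' = \lambda_k \cdot g'$ for some positive $\lambda_k \in C^\infty(M)$, so $g'' = \left(\int_K \lambda_k \, d\mu\right) \cdot g'$ is a positive scalar multiple of $g'$, hence a Lorentzian metric in $[g]$. Right-invariance of $\mu$ yields $K$-invariance of $g''$; for $R$-invariance, the normality $R \triangleleft G$ allows the computation
$$r^* g'' = \int_K (kr)^* g' \, d\mu(k) = \int_K k^* (krk^{-1})^* g' \, d\mu(k) = \int_K k^* g' \, d\mu(k) = g'',$$
using that $krk^{-1} \in R$ preserves $g'$. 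Combined with $G = KR$, this gives $G \subset \Isom(M,g'')$, as required. The substantive content is carried by Theorem \ref{thm:inessential_nilradical}; the subsequent averaging is the standard compact-group device, which succeeds in the conformal setting precisely because a positive function times a Lorentzian metric remains Lorentzian and in the same conformal class, so I do not anticipate any real obstacle beyond what Theorem \ref{thm:inessential_nilradical} already handles.
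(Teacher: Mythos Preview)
Your proof is correct and follows essentially the same route as the paper: reduce to inessentiality of $R$ via Theorem~\ref{thm:inessential_nilradical}, then average an $R$-invariant conformal metric over a compact Levi factor $K$ with $G=KR$. The paper phrases the averaging by noting that $K$ permutes the convex set of $R$-invariant metrics in $[g]$ (since $K$ normalizes $R$), whereas you verify $R$-invariance of the averaged metric by the direct computation $r^*g'' = \int_K k^*(krk^{-1})^*g'\,d\mu(k) = g''$; these are the same argument unpacked differently.
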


 Lorentzian manifolds for which $G/R$ is non-compact and $G$ is essential are conformally flat by \cite{pecastaing_smooth}.  The fact that their holonomy centralizes a non-compact simple Lie subgroup of $\PO(2,n) = \Conf(\Ein^{1,n-1})$ seems to be an indication that they are classifiable up to conformal equivalence,  justifying our assumption.  

We note  nonetheless that the statement is false in general when $G$ has a non-compact Levi factor.  For instance, if $(M,g)$ is a Hopf manifold,  $\Conf(M,g) \simeq O(1,n-1) \times \S^1$. The $\S^1$ factor is inessential but $\Conf(M,g)_0$ is not.  More generally,  we will observe:

\begin{proposition}
\label{prop:essential_noncompact_levi}
Let $G,R,N$ be as in Corollary \ref{cor:inessential_nilradical}.  If $N$ is inessential while $G$ is not,  then $R$ is abelian and $\g \simeq \s \oplus \mathfrak{r}$ as Lie algebras,  where $\s$ is a Levi factor of $\g$,  which necessarily contains a factor of non-compact type.
\end{proposition}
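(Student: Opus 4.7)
The starting point is Theorem \ref{thm:inessential_nilradical}, which upgrades the inessentiality of $N$ to that of $R$: fix a metric $g_R \in [g]$ preserved by $R$. Let $S \subset G$ be a Levi subgroup, so that $G = S \cdot R$ with $R$ normal. My first step is to prove that $\s$ contains a simple factor of non-compact type. Argue by contradiction: if $\s$ were entirely of compact type, then $S$ would be a compact Lie group, and Haar averaging would produce
\begin{equation*}
\bar g := \int_S s^* g_R \, \d s.
\end{equation*}
Pointwise, $s^* g_R$ is a positive multiple of $g$, so $\bar g$ is a Lorentzian metric in $[g]$, and it is $S$-invariant by construction. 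For $r \in R$, the normality of $R$ yields $srs^{-1} \in R$, whence $(sr)^* g_R = s^* (srs^{-1})^* g_R = s^* g_R$ and consequently $r^* \bar g = \bar g$. Thus $\bar g$ would be $G$-invariant, contradicting the essentiality of $G$; hence $\s$ has a non-compact simple factor $\s_{nc}$.

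Next, the corresponding subgroup $S_{nc} \subset G$ acts faithfully on $M$. The dichotomy in the theorem from \cite{pecastaing_smooth} recalled in Section \ref{ss:intro_essentiality} then applies: either $S_{nc}$ is inessential, which would force all of $G$ to be so and contradict the hypothesis, or $(M,[g])$ is conformally flat. Hence $M$ is conformally flat, and Liouville's theorem (see Lemma \ref{lem:iota_crochet}) provides a local Lie algebra embedding $\iota : \g \hookrightarrow \so(2,n)$.

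The remaining and most delicate part is to prove that $\gr$ is abelian and commutes with $\s$; this is the main obstacle. The plan is to analyze $\iota(\gr)$ as a solvable subalgebra of $\so(2,n)$ preserving a Lorentzian metric on an open subset of $\Ein^{1,n-1}$, which pins it down (up to conjugacy) to a subalgebra of a Poincaré-type piece $\R^{1,n-1} \rtimes \so(1,n-1)$. Since $\gr$ is also contained in $\Isom(M, g_R)_0$, the Adams-Stuck-Zeghib classification further restricts its isomorphism type to the short list of solvable Lie algebras appearing there. Finally, $\iota(\s)$ normalizes $\iota(\gr)$ and acts on it by derivations; dimensional lower bounds on non-trivial real representations of the non-compact simple factors of $\s$, combined with the smallness of the solvable pieces on the ASZ list, should force this action to be trivial, thereby yielding both the direct product decomposition $\g \simeq \s \oplus \gr$ and the abelianness of $\gr$.
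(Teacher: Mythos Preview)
Your first four steps are sound: Theorem~\ref{thm:inessential_nilradical}, the averaging argument for the non-compact factor, the dichotomy from \cite{pecastaing_smooth} yielding conformal flatness, and the resulting embedding $\g \hookrightarrow \so(2,n)$ are all correct. But the final step---where the actual content of the proposition lies---is only a plan, and one that does not work as stated. The claim that $\iota(\gr)$ sits in a Poincar\'e-type subalgebra is false in general: a compact one-parameter subgroup of $\PO(2,n)$ coming from a rotation in a positive-definite $2$-plane of $\R^{2,n}$ is solvable and preserves a metric on $\Ein^{1,n-1}$ by averaging, yet has no fixed point there. The Adams--Stuck--Zeghib theorem constrains the \emph{full} isometry algebra of $(M,g_R)$, not an arbitrary solvable subalgebra $\gr$ sitting inside it, and the solvable pieces on that list (Heisenberg and oscillator algebras of arbitrary rank) are not small, so abelianness of $\gr$ does not follow. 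The dimension-bound idea is also not decisive: $\sl_2(\R)$ already has a two-dimensional real representation, so a non-trivial $\s_{nc}$-action on $\gr$ cannot be excluded this way.

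The paper's argument in Section~\ref{ss:remarkLeviFactor} bypasses conformal flatness entirely and is much more direct. The mechanism is always the same: produce a conformal vector field $X$ with $g(X,X)>0$ everywhere, and use it to show that the non-compact part $\s'$ of the Levi factor is inessential, whence $G$ is inessential by \cite{pecastaing_smooth}. Concretely, if $\s'$ acts non-trivially on $\n$, a weight relation $[X,Y]=Y$ with $X$ in an $\R$-split Cartan subalgebra of $\s'$ and $Y\in\n$ gives $g(X,X)>0$ by Lemma~\ref{lem:affine_inessential}, and Lemma~2.3 of \cite{pecastaing_smooth} shows all of $\g$ preserves $g/g(X,X)$. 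Once $[\s',\n]=0$, the relation $[\s',\gr]=0$ follows from $[\g,\gr]\subset\n$ together with semisimplicity of $\s'$; then if $N$ were non-abelian, any $X\in\n\setminus\z(\n)$ has $g(X,X)>0$ (Lemma~\ref{lem:orthogonality_relations_loc_free}) and is centralized by $\s'$, forcing $\s'$ inessential. The same device, via Section~\ref{ss:inessential_abelian}, rules out $R\neq N$.
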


\subsection{Actions of nilpotent Lie groups}

By Corollary \ref{cor:inessential_nilradical},  if the identity component $G$ is essential and has compact Levi factor,  then some nilpotent subgroup of $G$ is also essential.    
As recalled above,  a general bound on the nilpotence degree of a nilpotent Lie group acting on closed pseudo-Riemannian manifolds,  as well as a geometric characterization of manifolds at the critical case,  was obtained in \cite{frances_melnick_nilpotent}.

We prove here the following for general nilpotent Lie group actions on closed Lorentzian manifolds.   For a nilpotent Lie algebra $\n$,  its nilpotence degree is the smallest integer $d \geq 1$ such that $\n_d = \{0\}$,  where we denote by $\{\n_i\}_{i \geq 1}$ the lower central series of $\n$.

\begin{theorem} 
\label{thm:nilpotent}
Let $H$ be a connected nilpotent real Lie group of nilpotence degree $k+1$ and let $(M^n,g)$, $n\geq 3$, be a compact Lorentzian manifold.  Let $H$ act locally faithfully by conformal transformations of $M$.  Then, we have the following.
\begin{enumerate}
\item Assume that $H$ is abelian.
\begin{enumerate}
\item Then $H$ acts locally freely on an open-dense subset of $M$,  hence $\dim H \leq n$.
\item If $H$ is essential,  then it admits either a fixed point,  or an isotropic $1$-dimensional orbit.
\item If $\dim H = n$ or $H \simeq \R^{n-1}$ and if $H$ acts faithfully and essentially,  then an open subset of $M$ is conformally flat.
\end{enumerate}
\item If $H$ is non-abelian,  then it is inessential if and only if $H_k$ acts locally freely.
\item If $H$ is non-abelian and essential,  then an open subset of $M$ is conformally flat.  Precisely, $\h$ has nilpotence degree $k \leq 3$, $\dim \h_k=1$ and if $X \in \h_k \setminus \{0\}$, then $X$ has a singularity of order $2$.
\end{enumerate}
\end{theorem}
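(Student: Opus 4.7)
The plan is to work in the Cartan geometric framework: the conformal class $[g]$ is equivalent to a normalized Cartan geometry $(\hat{M} \to M, \omega)$ modeled on $\Ein^{1,n-1} = O(2,n)/P$, and every conformal vector field $X$ on $M$ lifts to a $P$-invariant, $\omega$-constant field $\hat{X}$ on $\hat{M}$; the evaluation $\omega(\hat{X}) \in \so(2,n)$ at a point encodes its ``holonomy''. The proof will combine three tools: (i) genericity of the rank of the evaluation map of Killing algebras of rigid geometric structures (Gromov); (ii) the conformal stability dynamics near singularities in $\Ein^{1,n-1}$ (Frances); and (iii) the structural results of Frances--Melnick on nilpotent conformal actions, together with Theorem~\ref{thm:embedding}.

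For part~(1), assume $H$ is abelian. For (a), the evaluation $X \mapsto X(x)$ from $\h$ to $T_xM$ has maximal rank on an open dense set; local faithfulness together with commutativity forces this maximum to equal $\dim \h$ (otherwise the rigidity of the Cartan geometry would propagate any linear dependence between generators to an open neighborhood, producing a nonzero $X \in \h$ vanishing on an open set). This yields $\dim H \leq n$. For (b), assume by contradiction that $H$ has no fixed point and no $1$-dimensional isotropic orbit. Then by (a) the orbits on a dense open set have dimension $\dim H$, and the hypothesis excludes the only source of degeneration; the induced conformal class on each regular orbit is nondegenerate, and an averaging argument over $H$ (using that $H$ acts simply transitively on each regular orbit) produces an $H$-invariant conformal factor, contradicting essentiality. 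For (c), if $\dim H = n$ there is an open orbit $U$, and the abelian transitive conformal action endows $U$ with a flat affine structure compatible with $[g]$, forcing conformal flatness of $U$. If $H \simeq \R^{n-1}$ and $H$ is essential, (b) yields a singular point $x_0$; the isotropy representation at $x_0$ embeds into the parabolic $P < O(2,n)$, and a dimension count combined with the $\omega$-data of $\h$ identifies the germ at $x_0$ with an open set of $\Ein^{1,n-1}$.

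For part~(2), note that $\h_k$ is central in $\h$. If $H$ preserves a metric $g'$, then for $X \in \h_k$ the norm function $x \mapsto g'_x(X^\ast, X^\ast)$ is $H$-invariant; by the Adams--Stuck--Zeghib classification applied to the closure of $H$ in $\Isom(M,g')$, every $H_k$-orbit is lightlike with constant norm, in particular $H_k$ acts locally freely. Conversely, if $H_k$ acts locally freely, one descends to the quotient geometry of $\h_k$-orbits and concludes by induction on the nilpotence degree, combined with Theorem~\ref{thm:inessential_nilradical}. For part~(3), essentiality and~(2) yield a singular point $x_0$ of some nonzero central $X \in \h_k$. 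Theorem~\ref{thm:embedding} embeds $\h \hookrightarrow \so(2,n)$; nilpotent elements in $\so(2,n)$ that are central in a nilpotent subalgebra form a very restricted set, forcing $\dim \h_k = 1$ and $k \leq 3$. The singularity at $x_0$ has order $2$ because the corresponding nilpotent element lies in the unipotent radical of $P$, whose linear isotropy action on the model tangent space vanishes. Conformal flatness of a neighborhood of $x_0$ then follows from Frances' stability results for Lorentzian conformal fields with an order-$2$ fixed point: such a field is locally conjugate to its linear model on $\Ein^{1,n-1}$.

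The main technical obstacle will be the converse in part~(2): producing an $H$-invariant metric from the local freeness of $H_k$. The natural route is descent to the (typically non-Hausdorff) quotient by $\h_k$-orbits, so one must work on a saturated open set of regular orbits, build the invariant conformal factor there, and extend it globally by appeal to the rigidity of the Cartan geometry. Controlling the algebraic constraints from $\so(2,n)$ to pin down $\dim \h_k = 1$, rather than a higher-dimensional center, will be the secondary difficulty in part~(3).
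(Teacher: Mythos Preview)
Your proposal has two structural problems that undermine it, plus several local gaps.

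\textbf{Circularity.} You invoke Theorem~\ref{thm:embedding} to obtain $\h \hookrightarrow \so(2,n)$ in part~(3), and Theorem~\ref{thm:inessential_nilradical} for the converse in part~(2). In the paper both of those results are proved \emph{using} Theorem~\ref{thm:nilpotent} (see the organization in \S1.5 and \S\ref{ss:proof_inessentialNilradical}--\S\ref{s:proof_embedding}), so you cannot cite them here. The embedding into $\so(2,n)$ in part~(3) is obtained in the paper only \emph{after} conformal flatness of an open set is established, via Lemma~\ref{lem:iota_crochet} at a point where the curvature vanishes; the bounds $k\leq 3$ and $\dim\h_k=1$ then follow from Proposition~3.3 of \cite{frances_melnick_nilpotent}.

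\textbf{The converse in (2) is the heart, and your plan for it does not work.} Descending to a quotient by $\h_k$-orbits is not available: those orbits are light-like curves, so there is no induced pseudo-Riemannian structure on any putative quotient, and the quotient is typically not a manifold. The paper's argument is entirely different and substantially harder. It first applies the Bader--Frances--Melnick embedding theorem (Corollary~\ref{cor:virtual_isotropy}) to a Heisenberg triple $(X,Y,Z)$ with $Z\in\h_k$ to force the orthogonality relations $g(Z,Z)=0$, $g(X,Z)=0$, and $g(X,X)>0$ everywhere (Lemma~\ref{lem:heis3_triple_inessential}); this is where Zimmer-type measure rigidity enters. It then shows $\dim\h_k=1$ by a separate argument (Lemma~\ref{lem:nk_dim1}) and finally checks directly, using these orthogonality relations and the commutation relations in $\n$, that $\varphi=g(X,X)$ for a fixed non-central $X\in\n_{k-1}$ trivializes the conformal distortion cocycle for every one-parameter subgroup of $N$. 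No induction, no quotient.

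\textbf{Part (3).} Knowing that some $Z\in\h_k$ vanishes at $x$ tells you its holonomy lies in $\p$; it does \emph{not} tell you it lies in $\p^+$, which is what ``order~$2$'' means. The paper closes this gap by again applying Theorem~\ref{thm:bfm} at a point of $\{Z=0\}$ to produce an $\heis$-triple $(X',Y',Z')$ in $\so(2,n)$ with $Z'=\iota_{\hx}(Z)\in\p$, and then proving the algebraic Proposition~\ref{prop:heis_triple_so2n}: any such $Z'$ is a light-like translation, hence in $\p^+$ up to $P$-conjugacy. Only then does \cite{frances_melnick_normal_forms} yield conformal flatness.

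\textbf{Smaller issues in (1).} For (1)(a), ``rigidity propagates linear dependence'' is not an argument: the stabilizer subalgebra varies with the point, so you do not get a single $X\in\h$ vanishing on an open set. The paper's proof shows instead that a flow in the stabilizer must have trivial $1$-jet (using commutativity and \cite{melnick_pecastaing}, Lemma~2.4), and then that an order-$2$ singularity is dynamically incompatible with locally constant orbit dimension. For (1)(b), an ``averaging over $H$'' does not make sense for $H$ non-compact; the paper simply uses the function $\varphi(x)=\big(\sum_{i,j} g_x(X_i,X_j)^2\big)^{1/2}$, which is positive unless the orbit is totally isotropic. Your observation for (1)(c) when $\dim H=n$ --- that an open orbit of an abelian group is conformal to a translation-invariant metric on $\R^n$, hence conformally flat --- is correct and slicker than the paper's route, but note it does not give the order-$2$ singularity the paper records. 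For $H\simeq\R^{n-1}$ your sketch is far too thin; the paper's \S\ref{ss:prop_abelian} runs several pages and uses a delicate analysis of the holonomy of $\n_x$ in $\p$, closedness of light-like orbits, and a return-map argument to produce an element with holonomy in $\exp(\g_{\alpha+\beta})$.
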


\begin{remark}
For (1)(a) and (1)(b),  the results are true without assuming $M$ compact.
\end{remark}

\begin{remark}
We stress that for (1)(c),  our proof requires the action to be faithful in the case of $H=\R^{n-1}$.
\end{remark}

\begin{remark}
According to (1),  if $H$ is abelian of dimension greater than $1$,  and if $H$ acts locally freely,  then it is inessential.  The converse is false.

For $H = \R$,  consider the $3$-dimensional Hopf manifold $(M,[g]) = (\R^{1,2} \backslash \{0\}) / \langle 2 \id \rangle$.  Let $\{u^t\}$ be a unipotent one-parameter subgroup of $O(1,2)$ and let $\{k^t\}$ be the one induced by the homothetic flow on $\R^{1,2}$ (it factorizes into an $\S^1$-action). Then, the commutative product $\{u^tk^t\}$ is an essential conformal flow with no singularity.
\end{remark}

\begin{remark}
\label{rem:riemannian_abelian}
Point (1) generalizes easily to abelian conformal Lie group actions on Riemannian manifolds.  Essentiality is characterized by the existence of a global fixed point.  

However,  as shown in \cite{dambra} §.5,  there are examples of isometric actions of  $\R \times \mathbf{T}^k$ on closed pseudo Riemannian manifolds of metric index greater than $1$,  all of whose orbits are tori of dimension $k$.
\end{remark}

Combining Theorem \ref{thm:inessential_nilradical} and Theorem \ref{thm:nilpotent},  we get:

\begin{corollary}
Let $(M,g)$ be a closed real-analytic Lorentzian manifold and suppose that $G=\Conf(M,[g])_0$ is essential.  If the nilradical $N$ of $G$  is non-abelian,  then $(M,g)$ is conformally flat.
\end{corollary}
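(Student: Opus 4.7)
The plan is to combine Theorem \ref{thm:nilpotent}(3) with the earlier structure results on essentiality, and then exploit real-analyticity to propagate conformal flatness from an open subset to all of $M$.

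First I would dispose of the case where the Levi factor of $G$ is non-compact. If $G/R$ is non-compact (where $R$ is the solvable radical of $G$) and $G$ is essential, then the paper explicitly states, citing \cite{pecastaing_smooth}, that $(M,g)$ is conformally flat, so there is nothing to prove. Thus I may assume $G/R$ is compact.

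Next I would show that $N$ itself is essential. Under the assumption that $G/R$ is compact, Corollary \ref{cor:inessential_nilradical} gives the equivalence $G$ essential $\iff$ $N$ essential. Since $G$ is essential by hypothesis, $N$ is essential. (Alternatively, one can argue via Proposition \ref{prop:essential_noncompact_levi}: if $N$ were inessential while $G$ is essential and $G/R$ is non-compact, then $R$ would be abelian, forcing $N$ abelian, contradicting the hypothesis; this gives a uniform argument.) At this point $N$ is a connected nilpotent Lie subgroup of $\Conf(M,[g])$, non-abelian, and essential.

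Now I would apply Theorem \ref{thm:nilpotent}(3) to $N$: since $N$ is non-abelian and essential, there exists a non-empty open subset $U \subset M$ on which $g$ is conformally flat, i.e.\ the Weyl tensor (if $n \geq 4$) or the Cotton--York tensor (if $n=3$) vanishes identically on $U$.

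The final step, and the one where the real-analytic hypothesis is essential, is to extend conformal flatness from $U$ to all of $M$. Since $(M,g)$ is real-analytic, the Weyl (or Cotton) tensor is a real-analytic section of a real-analytic tensor bundle; vanishing on the non-empty open set $U$ forces it to vanish on every connected component meeting $U$, and since $M$ is connected, it vanishes everywhere. Hence $(M,[g])$ is conformally flat. The main (and only nontrivial) obstacle in this proof is really the input from Theorem \ref{thm:nilpotent}(3), which has already been established; the present corollary is essentially a bookkeeping consequence, with the real-analyticity assumption playing the standard role of analytically continuing local conformal flatness to the whole manifold.
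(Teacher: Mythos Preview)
Your proof is correct and follows the route the paper intends: the paper simply states that the corollary follows by ``combining Theorem \ref{thm:inessential_nilradical} and Theorem \ref{thm:nilpotent}'' without writing out a proof, and you have accurately supplied the missing steps --- first establishing that $N$ is essential (via the case split on the Levi factor, or more cleanly via Proposition \ref{prop:essential_noncompact_levi}, which directly forces $N$ abelian whenever $N$ is inessential and $G$ is essential), then invoking Theorem \ref{thm:nilpotent}(3) to get a conformally flat open subset, and finally using real-analyticity of the Weyl/Cotton tensor to propagate flatness to all of $M$.
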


As explained above,  it is not clear in general if,  given an essential group $G$,  there exists a proper subgroup $H$ which is still essential.  In view of Conjecture \ref{conj:Lichnerowicz},  it is notably an important matter to know if $G$ contains an essential flow,  so that the assumption of the conjecture  can be replaced by that of an essential conformal vector field.  Note that this is the first step in the $3$-dimensional case \cite{frances_melnick_lichne-anal3d}.  

In this direction,  another consequence of what precedes is that in the real-analytic case,  if the identity component is essential,  then either the manifold is conformally flat or the nilradical is abelian and acts essentially.   

\subsection{Organization of the article}

In Section \ref{s:section2},  we introduce some materials that will be used frequently,  especially a version of Zimmer's embedding theorem for Cartan geometries,  which is a central ingredient in our methods.  

In Section \ref{s:inessential},  we start proving (1)(a),  (1)(b) and (2) in Theorem \ref{thm:nilpotent},  which relates inessentiality and locally free actions.  We then prove Theorem \ref{thm:inessential_nilradical},  first in the case where the nilradical is abelian and then in the non-abelian case.   
For these results,  the renormalization function is built using the pseudo-norm $g(X,X)$ of certain conformal vector fields $X$ associated to the action.  In the non-abelian case, Corollary \ref{cor:virtual_isotropy} gives precious information about the signature of the orbits,  which will guarantee that the renormalization function is everywhere positive.  We show that the corresponding conformal metric is $R$-invariant by combining elementary arguments based on the commutation relations between flows in the $R$-action.

Section \ref{s:essential} is devoted to essential actions of solvable Lie groups.  The results in the inessential case imply that the nilradical is essential.  In several contexts,  we show that either a flow or a non-trivial element in the nilradical admits a singularity $x$ which  of order $2$ (\textit{i.e.} the $1$-jet is trivial),  or more generally has a non-linear unipotent holonomy,  a notion that we recall at the beginning of Section \ref{s:essential},  with other tools derived from the normalized Cartan geometry associated to a conformal structure.  The results of \cite{frances_melnick_normal_forms} then imply that an open subset containing the singularity in its closure is conformally flat.  
The main result of \cite{bader_frances_melnick} plays a central role here.  Working in the closure of a light-like orbit of the nilradical $N$,  this result implies that the holonomy of vector fields in $\n$ have to satisfy certain bracket relations related to the adjoint representation of $\gr$ on $\n$,  when the latter has non purely imaginary eigenvalues.  With some technical algebraic considerations,  we will exhibit flows with non linear unipotent singularities in $N$ in every case that does not appear in Proposition \ref{prop:cas-defavorables}.  The section concludes with the case of conformal essential actions of abelian Lie groups of dimension $n$ and $n-1$,  concluding the proof of (1)(c) in Theorem \ref{thm:nilpotent}.  The adjoint action being trivial,  Zimmer's embedding theorem does not provide information. The fact that we are close to the critical dimension implies that the isotropy of a point in a light-like orbit is locally linearizable and conjugate to the full horospherical subgroup of $O(1,n-1)$ (if not, an open subset is conformally flat).  It follows that all light-like orbits are closed,  and an element of the form $\phi_X^{t_0}$,  $t_0>0$, admits a singularity of order $2$,  yielding conformal flatness of an open subset as before.

In the last section \ref{s:proof_embedding},  we conclude the proof of Theorem \ref{thm:embedding}.     The latter being immediate if we can prove that an open subset is conformally flat,  we can restrict to cases to which the methods of Section \ref{s:essential} do not apply.  These are described by the following.

\begin{proposition}
\label{prop:cas-defavorables}
Let $(M^n,g)$ be a closed Lorentzian manifold, with $n \geq 3$.  Suppose that a connected, solvable Lie group $R$ acts conformally and essentially on $M$.  Then,  either there exists a non-empty conformally flat open subset of $M$,  or  $\gr$ is isomorphic to a semi-direct product $\a \ltimes_{\rho} \n$,  where:
\begin{itemize}
\item $\n$ is  abelian,  with $1 \leq \dim \n \leq n-1$ ;
\item $\a$ is abelian and $\rho : \a \to \gl(\n)$ is a faithful representation such that $\rho(X)$ is semi-simple for all $X \in \a$;
\item Considering the complex weights $\{\alpha_1,\ldots,\alpha_r\} \subset (\a^*)^{\C}$ of $\rho^{\C}$,  if $\lambda_i = \Rea(\alpha_i)$ and $\mu_i = \Ima(\alpha_i)$,  then there exists $\lambda \in \a^*$ such that for all $i$,  $\lambda_i \in \{0,\lambda\}$.
\end{itemize}
\end{proposition}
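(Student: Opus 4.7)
The strategy is to prove the proposition by elimination: assume $(M,[g])$ admits no conformally flat open subset and deduce that $\gr$ has the claimed structure. Denote by $N \triangleleft R$ the nilradical. By Theorem \ref{thm:inessential_nilradical} in contrapositive form, the essentiality of $R$ forces $N$ to be essential. If $N$ were non-abelian, Theorem \ref{thm:nilpotent}(3) would produce a conformally flat open subset, a contradiction; hence $\n$ is abelian. The bound $\dim \n \geq 1$ just reflects that a non-trivial solvable algebra has a non-trivial nilradical, and Theorem \ref{thm:nilpotent}(1)(c) forbids $\dim N = n$ as well as the faithful $\R^{n-1}$ case, which yields $\dim \n \leq n-1$.

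Next I would construct the semi-direct product decomposition. Pick a vector-space complement $\a_0$ to $\n$ in $\gr$ and define $\rho(X) = \ad(X)|_\n$ for $X \in \a_0$. Since $\gr/\n$ is abelian, the Jacobi identity gives $[\rho(X),\rho(Y)] = 0$ for all $X, Y \in \a_0$. Faithfulness of $\rho$ is immediate: if $\rho(X) = 0$ then $\R X + \n$ is an abelian, hence nilpotent, ideal of $\gr$; maximality of the nilradical forces $X \in \n \cap \a_0 = 0$. For the semisimplicity of each $\rho(X)$ and the upgrade of $\a_0$ to an abelian subalgebra $\a$, I would invoke the techniques of Section \ref{s:essential}: if $\rho(X)$ had a non-trivial Jordan nilpotent part, the analysis via \cite{bader_frances_melnick} in the closure of a light-like $N$-orbit would produce an element of $\n$ whose flow admits a non-linear unipotent holonomy at a recurrent point, and the normal forms of \cite{frances_melnick_normal_forms} would then yield a conformally flat open subset, a contradiction. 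Once $\rho$ is semisimple, the $2$-cocycle $\omega(X,Y) = [X,Y]_\gr \in \n$ measuring the failure of $\a_0$ to be a subalgebra can be killed by a suitable coboundary $f : \a_0 \to \n$, producing the desired abelian complement $\a$.

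The remaining step, and the main obstacle, is the weight condition. Write the complex weights as $\alpha_j \in (\a^{\C})^*$ with real parts $\lambda_j = \Rea(\alpha_j)$. I must show that $\{\lambda_j\} \subseteq \{0, \lambda\}$ for some $\lambda \in \a^*$. The plan is again by contradiction: if three distinct real parts appear, or if two non-proportional non-zero real parts appear, then combining the commutation relations between eigenvectors supplied by the Bader--Frances--Melnick structural theorem on the closure of a light-like $N$-orbit with the adjoint action of $\a$ on $\n$, one builds an element of $\n$ whose flow has a non-linear unipotent holonomy at a recurrent singularity, yielding a conformally flat open subset via \cite{frances_melnick_normal_forms}. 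The technical heart is a case-by-case algebraic analysis enumerating the configurations of real parts outside the claimed family and verifying in each case that such a non-linear unipotent element can be exhibited; this is where I would expect the bulk of the technical effort to be concentrated.
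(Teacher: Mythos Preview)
Your overall architecture matches the paper's: reduce to abelian $N$ via Theorems~\ref{thm:inessential_nilradical} and~\ref{thm:nilpotent}(3), bound $\dim\n$ via Theorem~\ref{thm:nilpotent}(1)(c), obtain semisimplicity of $\rho$ from Proposition~\ref{prop:rho_semisimple}, split the extension, and finally constrain the real parts of the weights via Lemma~\ref{lem:sol_like}.

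There is, however, a genuine gap in your splitting step. The claim that ``the $2$-cocycle can be killed by a suitable coboundary'' is false as a purely algebraic statement when $\z(\gr)\neq 0$. A coboundary $\delta f(X,Y)=\rho(X)f(Y)-\rho(Y)f(X)$ always has trivial component in $\z(\gr)$, since $\rho$ acts trivially there; so the center-valued part of the cocycle cannot be removed. Concretely, take $\a_0=\R^2$ acting on $\n=\R^3$ by $\rho(e_1)=\diag(1,0,0)$, $\rho(e_2)=\diag(0,1,0)$, and set $[e_1,e_2]$ equal to the third basis vector of $\n$: this gives a solvable $\gr$ with abelian nilradical $\n$, semisimple $\rho$, and no abelian complement to $\n$. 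The paper's Proposition~\ref{prop:split_exact_sequence} only establishes $[\a,\a]\subset\z(\gr)$, and then closes the gap \emph{geometrically}: if $[\a,\a]\neq 0$, one obtains an $\heis$-triple $(X,Y,Z)$ in $\gr$; the second half of Proposition~\ref{prop:split_exact_sequence} proves that every $\heis$-triple of $\gr$ is essential, and Theorem~\ref{thm:nilpotent}(3) applied to this essential non-abelian nilpotent action yields a conformally flat open subset, a contradiction. You need to insert this argument.

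A smaller point on the weight condition: your dichotomy ``three distinct real parts, or two non-proportional non-zero real parts'' misses configurations like $\{\lambda_i\}=\{\lambda,2\lambda\}$. The paper's Lemma~\ref{lem:sol_like} is sharper and simpler than a case-by-case analysis: it shows directly that if \emph{some} $\rho(X)$ has two eigenvalues with distinct non-zero real parts, an element of $\n$ has a singularity of order~$2$. Since for two non-zero linear forms $\lambda_i\neq\lambda_j$ the set $\{X:\lambda_i(X)\neq 0,\ \lambda_j(X)\neq 0\}$ is open dense, this immediately forces all non-zero $\lambda_i$ to coincide as elements of $\a^*$.
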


With a last technical observation in the case $\dim \n = n-1$  (Lemma \ref{lem:dimN=n-1}),  it will then be easy to perform an explicit embedding of $\gr$ into $\so(2,n)$.

\subsection{Notations and conventions}

We will call $\heis$-triple in a Lie algebra any triple $(X,Y,Z)$ such that $[X,Y] = Z$,  $[X,Z] = [Y,Z]=0$ and $Z \neq 0$. They span a copy of the $3$-dimensional Heisenberg Lie algebra.  The lower central series $\{\g_k\}$ of a Lie algebra $\g$ is defined as $\g_0 = \g$ and $\g_{k+1} = [\g,\g_k]$.  For $\g$ nilpotent,  the nilpotence degree is defined as the smallest integer $k$ such that $\g_k = 0$.  

If $G$ is  a Lie group and $\h \subset \g$ a Lie subalgebra of its Lie algebra,  we call integral subgroup associated to  $\h$ the connected,  immersed subgroup of $G$ tangent to $\h$ at the identity.

If $G$ is a Lie group acting smoothly and with discrete kernel on a manifold $M$,  then we associate to any element $X \in \g$ the vector field $\bar{X}$ of $M$ which is the opposite of the infinitesimal generator of the flow corresponding to the action of $e^{tX}$.  The mapping $\{X \mapsto \bar{X}\}$ is then a Lie algebra embedding identifying $\g$ with a Lie algebra of vector fields of $M$,  which is implicitly used in all the article.

A Lie group action of a Lie group $G$ on a manifold $M$ is said to be locally faithful if its kernel is discrete. It is said to be locally free if the stabilizer of any point is discrete. We will use the same terminology for a Lie sub-algebra $\h \subset \g$ when the corresponding integral subgroup $H$ has the property in question.  For instance,  we will say ``$\h$ has a fixed point'' to mean that $H$ has a fixed point,  or ``$\h$ has a degenerate orbit at $x$''  to mean that the orbit $H.x$ is degenerate.  For a vector subspace $\h \subset \g$ and $x \in M$,  we will note $\h(x) = \{X_x, \ X \in \h\} \subset T_xM$.

If $X$ is a vector field of $M$,  we say that $X$ has a singularity of order $2$ at a point $x_0$  if $X(x_0)=0$ and $\d_{x_0}\phi_X^t = \id$,  \textit{i.e.} if its local flow has trivial $1$-jet at $x_0$.

If $(M,g)$ is a pseudo-Riemannian manifold on which a group $G$ acts conformally,  the conformal distortion of $G$ with respect to $g$ is the $\R_{>0}$-valued cocycle $\lambda : G \times M \to \R_{>0}$ defined for all $\varphi \in G$ and $x \in M$ by $[\varphi^* g]_x = \lambda(\varphi,x) g_x$.  The action of $G$ is said to be inessential if $g$ can be replaced by a conformal metric $g'$ with respect to which the action is isometric.

\section{Embedding theorem and other anterior results}
\label{s:section2}

\subsection{Embedding theorem for Cartan geometries}
\label{ss:embedding_general}

We will use several times a version of Zimmer's embedding theorem for Cartan geometries proved in \cite{bader_frances_melnick}, Theorem 4.1.  We recall the setting and its statement below.  We will apply a corollary of this theorem in Section \ref{s:inessential}, which can be stated (and in fact proved) without the formalism of Cartan geometries.  We will then use its full strength in Section \ref{s:essential} and afterwards.  We refer the reader to Section \ref{ss:cartanGeometryConformal} for a brief introduction to the Cartan connection associated to a conformal structure.  At first reading,   the reader can skip the general version and retain only Corollary \ref{cor:virtual_isotropy} below.

Let $H$ be a connected Lie group and let $S < H$ be a subgroup,  not required to be closed.  Following \cite{shalom}, define the \textit{discompact radical} of $S$ as the largest algebraic subgroup $\bar{S}_d$ in the  Zariski closure of $\Ad_{\h}(S)$ which does not admit any proper, algebraic, normal, cocompact subgroup. For instance, $\bar{H}_d = \Ad_{\h}(H)$ for $S=H$ an algebraic semi-simple Lie group of non-compact type.

Let $(M,\hat{M},\omega)$ be a Cartan geometry with an effective model space $(\g,P)$.  We note $\pi : \hat{M} \rightarrow M$ the fibration.  Let $H \to \Aut(M,\hat{M},\omega)$ be an action by automorphisms of the Cartan geometry,  which we assume to be a \textit{proper} Lie group homomorphism.  Any such action gives rise to a natural map $\iota  : \hat{M} \rightarrow \text{Mon}(\h,\g)$, where $\text{Mon}(\h,\g)$ denotes the variety of injective linear maps from $\h$ to $\g$,  defined by $\iota(\hx)(X) = \omega_{\hx}(X)$ for all $\hx \in \hat{M}$ and $X \in \h$.  Recall that by effectiveness of the model,  we can define without ambiguity lifts to the Cartan bundle of infinitesimal automorphisms of the Cartan geometry and $X$ can be seen as a vector field on $M$ or as a right-$P$-invariant vector field on $\hat{M}$.

\begin{theorem}[\cite{bader_frances_melnick}]
\label{thm:bfm}
Suppose that $\Ad_{\g}(P)$ is almost algebraic in $\GL(\g)$.  
If $S$ preserves a finite Borel measure $\mu$ on $M$,  then for $\mu$-almost every $x$,  for every $\hx \in \pi^{-1}(x)$, there exists an algebraic subgroup $\chech{S} < \Ad_{\g}(P)$ such that 
\begin{itemize}
\item for all $\chech{p} \in \chech{S}$, $\chech{p}. \iota_{\hx}(\h) = \iota_{\hx}(\h)$,
\item the induced homomorphism $\chech{S} \rightarrow \GL(\h)$ is algebraic, with image $\bar{S}_d$.
\end{itemize} 
\end{theorem}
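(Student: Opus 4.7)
The plan is to carry Zimmer's algebraic-hull machinery over to the Cartan-geometric setting, using $\iota$ as the measurable object through which the $S$-action is coded. First I would record two equivariance identities for $\iota$. From the $P$-equivariance of the Cartan connection, $R_p^* \omega = \Ad_{\g}(p^{-1}) \circ \omega$, together with the $P$-invariance of the lift $\bar{X}$ to $\hat{M}$ for $X \in \h$, one obtains
\[ \iota(\hx \cdot p) = \Ad_{\g}(p^{-1}) \circ \iota(\hx), \qquad p \in P. \]
On the other hand, because $S$ preserves $\omega$ and $(L_s)_* \bar{X} = \overline{\Ad_{\h}(s) X}$ on $\hat{M}$, a direct computation gives
\[ \iota(s \cdot \hx) = \iota(\hx) \circ \Ad_{\h}(s^{-1}), \qquad s \in S. \]
Consequently $\iota$ descends to a measurable map $\bar{\iota} : M \to \Mon(\h,\g) / \Ad_{\g}(P)$ and the push-forward $\nu := \bar{\iota}_* \mu$ is a finite Borel measure on that quotient, invariant under the right action of $\Ad_{\h}(S)$ by precomposition on the source.

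Second, I would set up the associated cocycle reduction. Choosing a measurable section $\sigma : M \to \hat{M}$ of $\pi$, the identity $s \cdot \sigma(x) = \sigma(sx) \cdot \alpha(s,x)$ defines a measurable cocycle $\alpha : S \times M \to P$, and combining it with the two equivariances yields
\[ \iota(\sigma(sx)) = \Ad_{\g}(\alpha(s,x)) \circ \iota(\sigma(x)) \circ \Ad_{\h}(s^{-1}). \]
Since $\Ad_{\g}(P)$ is almost algebraic by hypothesis, the cocycle $s \mapsto \Ad_{\g}(\alpha(s, \cdot))$ admits a well-defined algebraic hull in $\Ad_{\g}(P)$, and the whole game becomes relating this hull to $\bar{S}_d$ on the source side.

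Third, I would invoke a Furstenberg/Borel-density-type invariance principle. As $\nu$ is a finite Borel measure on an essentially algebraic $\GL(\h)$-variety and is invariant under the Zariski-dense subgroup $\Ad_{\h}(S)$ of its Zariski closure, and as the compact factor of that closure cannot contribute generically (a compact group preserving a finite measure on an algebraic variety stabilizes almost every point), $\nu$ is in fact invariant under the discompact radical $\bar{S}_d$. Translating back, at $\mu$-a.e.~$x$ and for every $\hx \in \pi^{-1}(x)$, the class $[\iota(\hx)] \in \Mon(\h,\g)/\Ad_{\g}(P)$ is fixed by each $\sigma \in \bar{S}_d$, so there exists $\chech{p}_{\sigma} \in \Ad_{\g}(P)$ with
\[ \iota(\hx) \circ \sigma^{-1} = \chech{p}_{\sigma} \circ \iota(\hx). \]
The Zariski closure in $\Ad_{\g}(P)$ of $\{ \chech{p}_{\sigma} : \sigma \in \bar{S}_d \}$ is the desired algebraic subgroup $\chech{S}$: it stabilizes $\iota_{\hx}(\h) \subset \g$ tautologically, and through the identification $\iota_{\hx} : \h \to \iota_{\hx}(\h)$ the induced morphism $\chech{S} \to \GL(\h)$ is algebraic with image exactly $\bar{S}_d$.

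The main obstacle I expect is this middle step. Making the Borel-density argument rigorous on the Borel quotient $\Mon(\h,\g)/\Ad_{\g}(P)$ is delicate: one has either to work directly with the cocycle $\alpha$ and compute its algebraic hull in $\Ad_{\g}(P)$ via a Zimmer-style cocycle-reduction theorem, or to push $\nu$ up through a measurable cross-section and perform an ergodic decomposition. The subtle point is to confirm that the compact part of the Zariski closure of $\Ad_{\h}(S)$ contributes nothing after projecting modulo $\Ad_{\g}(P)$, which is precisely the reason that only the discompact radical, and not the full Zariski closure, is transported to the source side.
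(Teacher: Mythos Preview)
This theorem is not proved in the present paper: it is quoted verbatim from \cite{bader_frances_melnick}, Theorem~4.1, and used as a black box throughout Sections~\ref{s:inessential}--\ref{s:proof_embedding}. There is therefore no in-paper proof to compare your proposal against.

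That said, your outline is broadly faithful to the strategy of the original reference: the two equivariance identities for $\iota$ are correct, the passage to a cocycle over $(S,\mu)$ with values in the almost-algebraic group $\Ad_{\g}(P)$ is the right move, and the reduction to $\bar{S}_d$ via a Borel-density/Furstenberg-type argument is exactly where the content lies. Two points would need tightening if you actually wanted a proof. First, the elements $\chech{p}_{\sigma}$ are not uniquely determined unless the $\Ad_{\g}(P)$-action on $\Mon(\h,\g)$ has trivial stabilizers at $\iota(\hx)$; one must either quotient by the stabilizer or argue via the algebraic hull of the cocycle rather than pointwise lifts, and then check that the resulting $\chech{S}$ is genuinely a subgroup and that $\chech{S}\to\GL(\h)$ is a well-defined algebraic morphism. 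Second, the step ``$\nu$ is invariant under $\bar{S}_d$'' is not quite the right formulation: what one actually needs is that the $\bar{S}_d$-orbit of $\mu$-a.e.\ point in $\Mon(\h,\g)/\Ad_{\g}(P)$ is a single point, which is stronger than measure invariance and is precisely where the discompactness hypothesis enters (a compact group can preserve a measure without fixing generic points). You flag this yourself as the main obstacle, and it is.
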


Let $H$, $S$ and $\mu$ be as above.  For $x \in M$, we denote by $H_x$  the stabilizer of $x$ in $H$ and $\h_x$ its Lie algebra.  The tangent space $T_x(H.x)$ identifies with $\h / \h_x$,  we note $[q_x]$ the ray of quadratic forms on $\h/\h_x$ corresponding to the restriction of the conformal class $[g]$ to the tangent space of the $H$-orbit of $x$.  It is a general fact that the adjoint action of $H_x$ on $\h/\h_x$ is conformal with respect to $[q_x]$.  The following consequence of the previous theorem states that for $\mu$-almost every point,  the same holds for the discompact radical of $S$,  even when $H$ acts locally freely for instance. 

\begin{corollary}
\label{cor:virtual_isotropy}
For $\mu$-almost every point $x$,  $\h_x$ is $\bar{S}^d$-invariant and the induced action of $\bar{S}^d$ on $\h / \h_x$ is conformal with respect to $[q_x]$.
\end{corollary}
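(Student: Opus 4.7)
The plan is to transport the conclusions of Theorem \ref{thm:bfm} from the Lie algebra level to the tangent space of the orbit, via the standard identifications furnished by the normalized conformal Cartan geometry. Pick $x$ outside the $\mu$-null set of Theorem \ref{thm:bfm}, choose some $\hx \in \pi^{-1}(x)$, and let $\chech{S} < \Ad_{\g}(P)$ be the associated algebraic subgroup.

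Recall that for the normalized Cartan geometry associated with a Lorentzian conformal structure one has $\g = \so(2,n)$ and $\p$ is the stabilizer of an isotropic line in $\R^{2,n}$. The Cartan connection $\omega_{\hx} : T_{\hx}\hat{M} \xrightarrow{\sim} \g$ sends the vertical tangent subspace onto $\p$, and hence descends to a linear isomorphism $T_xM \xrightarrow{\sim} \g/\p$ which identifies $[g_x]$ with the natural pseudo-Euclidean conformal class on $\g/\p \simeq \R^{1,n-1}$; under this identification, $\Ad_{\g}(P)$ acts on $\g/\p$ by conformal linear transformations. Moreover, $X \in \h$ lies in $\h_x$ if and only if the lift of $\bar{X}$ is vertical at $\hx$, which reads $\iota_{\hx}(\h_x) = \iota_{\hx}(\h) \cap \p$.

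Since $\chech{S} \subset \Ad_{\g}(P)$ preserves $\p$, and by the first conclusion of Theorem \ref{thm:bfm} it preserves $\iota_{\hx}(\h)$, it preserves their intersection $\iota_{\hx}(\h_x)$. Translating through $\iota_{\hx}$ and using the algebraic isomorphism $\chech{S} \to \bar{S}^d$ given by the second conclusion, this shows that $\h_x$ is $\bar{S}^d$-stable. The inclusion $\iota_{\hx}(\h) \hookrightarrow \g$ then induces an injection $\iota_{\hx}(\h)/\iota_{\hx}(\h_x) \hookrightarrow \g/\p$, which under the identifications above corresponds to $\h(x) = T_x(H.x) \hookrightarrow T_xM$. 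As $\chech{S}$ acts conformally on $\g/\p$ and stabilizes the image of this injection, its induced action on the image is conformal with respect to the restricted conformal class, namely $[q_x]$; transporting back through $\iota_{\hx}$ and $\chech{S} \simeq \bar{S}^d$ yields the conformality of the $\bar{S}^d$-action on $\h/\h_x$.

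I do not expect a serious obstacle here: once Theorem \ref{thm:bfm} is granted, the corollary amounts to the diagram chase above together with the standard fact that the Cartan connection intertwines the $\Ad(P)$-action on $\g/\p$ with the canonical conformal structure on the tangent bundle, a feature recalled in Section \ref{ss:cartanGeometryConformal}.
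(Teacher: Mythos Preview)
Your argument is correct and is precisely the intended derivation; the paper does not spell out a proof for this corollary, presenting it as an immediate consequence of Theorem~\ref{thm:bfm}, and your diagram chase through $\iota_{\hx}(\h_x)=\iota_{\hx}(\h)\cap\p$ together with the conformality of the $\Ad(P)$-action on $\g/\p$ is exactly how one unpacks it. One harmless imprecision: Theorem~\ref{thm:bfm} only asserts that the homomorphism $\chech{S}\to\GL(\h)$ has image $\bar{S}^d$, not that it is an isomorphism, but surjectivity is all your argument uses.
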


In particular,  when $S$ is amenable,  every compact $S$-invariant subset of $M$ contains a point where the conclusions of this corollary are true,  since amenability guaranties the existence of a finite $S$-invariant measure supported in this compact subset.

\subsection{Isotropic conformal vector fields}

We will use several times the following results (see for instance Lemma 6.1 of \cite{adams_stuck}).

\begin{lemma}
\label{lem:zero_kilingfield}
Let $X$ be a Killing vector field of a Lorentzian manifold $(M,g)$.  If $g(X,X) = 0$ and if $X$ vanishes at some point, then $X=0$.
\end{lemma}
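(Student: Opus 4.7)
The plan is to exploit the classical rigidity of Killing fields: a Killing field that vanishes to order $2$ at some point (i.e.\ its $1$-jet is trivial) must vanish identically, since its flow is determined by its linearization at a fixed point. So it suffices to show that, under the hypotheses, $\nabla X|_{x_0} = 0$, where $x_0$ is a zero of $X$.

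Set $f = g(X,X)$, which is identically zero by assumption, and let $A = \nabla X|_{x_0}$, viewed as an endomorphism of $T_{x_0}M$. Since $X(x_0)=0$, differentiating $f$ twice and evaluating at $x_0$ kills the terms containing $X$, leaving
\[
\mathrm{Hess}_{x_0}(f)(Y_1,Y_2) = 2\, g_{x_0}(AY_1, AY_2).
\]
As $f \equiv 0$, this Hessian vanishes, so the image $V := \mathrm{Im}(A) \subset T_{x_0}M$ is a totally isotropic subspace with respect to $g_{x_0}$. In Lorentzian signature, totally isotropic subspaces have dimension at most $1$.

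If $V = 0$, then $A = 0$ and we are done. Otherwise $V = \R v$ for some nonzero null vector $v$, and we may write $A = v \otimes \alpha$ for a unique $\alpha \in T_{x_0}^*M$. Now $A$ is skew-symmetric with respect to $g_{x_0}$ because $X$ is a Killing field: $g(AY_1,Y_2) + g(Y_1,AY_2)=0$. Substituting the rank-one form $A = v \otimes \alpha$ gives
\[
\alpha(Y_1)\, g(v,Y_2) + \alpha(Y_2)\, g(v,Y_1) = 0
\]
for all $Y_1,Y_2$. Choosing $Y_2$ with $g(v,Y_2)=1$ forces $\alpha$ to be proportional to the linear form $g(v,\cdot)$, say $\alpha = c\, g(v,\cdot)$. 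But then $g(AY_1,Y_2) = c\, g(v,Y_1)g(v,Y_2)$ is symmetric in $(Y_1,Y_2)$, and combining symmetry with skew-symmetry forces $A = 0$. The only real obstacle is the bound on totally isotropic subspaces, which is exactly where the Lorentzian hypothesis enters (and where the analogous statement fails in higher index). By the rigidity recalled in the first paragraph, $X \equiv 0$.
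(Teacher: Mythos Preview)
Your argument is correct. The key steps --- computing the Hessian of $g(X,X)$ at the zero $x_0$, deducing that $\mathrm{Im}(\nabla X|_{x_0})$ is totally isotropic (hence a null line in Lorentzian signature), and then using skew-symmetry of $\nabla X|_{x_0}$ to rule out the rank-one case --- are all sound. One small remark: the final rigidity step (a Killing field with trivial $1$-jet at a point vanishes identically) gives $X=0$ on the connected component of $x_0$; connectedness of $M$ is the standing convention here.

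As for comparison: the paper does not supply its own proof of this lemma; it simply cites Lemma~6.1 of Adams--Stuck. Your write-up is therefore a welcome self-contained argument rather than a variant of something in the text.
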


\begin{lemma}[\cite{adams_stuck}, Lem.  6.5]
\label{lem:zero_nilpotentalgebra}
Let $M$ be a compact manifold and let $G$ be  a Lie group acting smoothly on $M$.  Let   $X,Y\in \g$ be such that $Z:=\ad(X)^k\,Y$ is centralized by $X$, for some $k \geq 1$.  If $Y$ vanishes at some point of $M$, then so does $Z$.
\end{lemma}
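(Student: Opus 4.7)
The plan is to expand the pullback of $\bar Y$ along the flow of $\bar X$ as a polynomial in $t$ whose leading coefficient is $\bar Z$, and then to use a zero of $\bar Y$ together with compactness of $M$ to force $\bar Z$ to vanish somewhere.

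Set $W_j := \overline{\ad(X)^{j-1}Y}$ for $1 \leq j \leq k+1$, so that $W_1 = \bar Y$ and $W_{k+1} = \bar Z$. Since $X \mapsto \bar X$ is a Lie algebra homomorphism, one has $\mathcal{L}_{\bar X} W_j = [\bar X, W_j] = W_{j+1}$ for $1 \leq j \leq k$, while $\mathcal{L}_{\bar X} W_{k+1} = \overline{[X,Z]} = 0$ by hypothesis. Let $\phi^t$ denote the flow of $\bar X$, which is globally defined on $M$ by compactness. Iterating the standard identity $\frac{d}{dt}(\phi^t)^* V = (\phi^t)^* \mathcal{L}_{\bar X} V$ gives $\frac{d^{k+1}}{dt^{k+1}}(\phi^t)^* \bar Y = 0$, so $t \mapsto (\phi^t)^* \bar Y$ is a polynomial of degree at most $k$ in $t$:
$$
(\phi^t)^* \bar Y \;=\; \sum_{j=0}^{k} \frac{t^{j}}{j!}\, W_{j+1}, \qquad t \in \R,
$$
as an identity of smooth vector fields on $M$.

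Now fix $x_0 \in M$ with $\bar Y_{x_0}=0$, and for each $t \in \R$ set $x_t := \phi^{-t}(x_0)$. Evaluating the identity above at $x_t$, the left-hand side equals $(d_{x_0}\phi^{-t})(\bar Y_{x_0}) = 0$, yielding
$$
\frac{t^{k}}{k!}\, W_{k+1}(x_t) \;=\; -\sum_{j=0}^{k-1} \frac{t^{j}}{j!}\, W_{j+1}(x_t) \qquad \text{in } T_{x_t}M.
$$
Choose an auxiliary Riemannian metric on $M$. By compactness, the vector fields $W_1,\dots,W_k$ are uniformly bounded, so after dividing by $t^k$ the right-hand side has norm $O(1/t)$ as $t \to +\infty$, hence $\|W_{k+1}(x_t)\| \to 0$. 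Extracting a sequence $t_n \to +\infty$ with $x_{t_n} \to x_\infty$ in $M$ (possible by compactness), continuity of $\bar Z = W_{k+1}$ yields $\bar Z_{x_\infty} = 0$.

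I expect no genuine obstacle. The only step meriting a moment of care is the passage from a norm estimate at the moving points $x_t$ to the conclusion at the limit $x_\infty$, handled routinely by the continuous family of norms on $TM$ coming from the auxiliary Riemannian metric. The whole argument amounts to the remark that when $\bar X$ turns $\bar Y$ into a polynomial vector field in $t$, any zero of $\bar Y$ forces its leading coefficient $\bar Z$ to acquire a zero as well, by compactness.
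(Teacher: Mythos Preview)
Your proof is correct. The paper does not give its own proof of this lemma---it simply cites \cite{adams_stuck}, Lemma~6.5---and your argument is essentially the one found there: expand $(\phi^t)^*\bar Y$ as a polynomial in $t$ with top coefficient $\bar Z$, evaluate along the backward orbit of a zero of $\bar Y$, and use compactness to extract a limit point where $\bar Z$ vanishes.
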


\begin{remark}
\label{rem:heis_localement_libre}
Note that if $(X,Y,Z)$ is an $\heis$-triple of vector fields of a compact manifold,  and if $Z$ has no singularity,  then the corresponding action of $\Heis(3)$ is locally free.
\end{remark}

Another tool that we will use is the following.

\begin{lemma}
\label{lem:collinearVectorFields}
Let $X,Y$ be two conformal vector fields of a pseudo-Riemannian manifold $(M,g)$.  If $X_x$ and $Y_x$ are collinear for every $x \in M$, then $X$ and $Y$ are collinear.  In particular,  in Lorentzian signature,  if $X$ and $Y$ are everywhere light-like and orthogonal, then they are proportional.
\end{lemma}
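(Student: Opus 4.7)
My plan is to reduce the statement to showing that, on the open subset $U := \{x \in M : Y_x \neq 0\}$, the smooth function $\lambda$ defined by $X = \lambda Y$ is locally constant, and then to globalize via the rigidity of conformal Killing fields on a connected manifold of dimension $n \geq 3$ (the standing assumption of the paper).

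On $U$, the Leibniz-type identity
\[
\mathcal{L}_{\lambda Y}\, g \;=\; \lambda\, \mathcal{L}_Y g \;+\; d\lambda \otimes Y^{\flat} + Y^{\flat} \otimes d\lambda,
\]
combined with the conformal Killing equations $\mathcal{L}_X g = \sigma g$ and $\mathcal{L}_Y g = \tau g$, gives
\[
d\lambda \otimes Y^{\flat} + Y^{\flat} \otimes d\lambda \;=\; (\sigma - \lambda\tau)\, g \qquad \text{on } U,
\]
where $Y^{\flat} := g(Y,\cdot)$. At a point $x \in U$, set $\alpha := d\lambda_x$ and $\beta := Y^{\flat}_x \neq 0$. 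The left-hand side is a symmetric bilinear form of rank at most $2$, whereas a nonzero multiple of $g_x$ has full rank $n \geq 3$. A short case distinction (whether $\alpha$ and $\beta$ are linearly independent or proportional) forces $\alpha = 0$ and $\sigma - \lambda \tau = 0$ at $x$. Hence $d\lambda \equiv 0$ on $U$, so $\lambda$ is constant on each connected component of $U$.

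To globalize, I fix a connected component $U_1 \subset U$ with $\lambda \equiv \lambda_0$ and consider $Z := X - \lambda_0 Y$, which is a conformal vector field vanishing identically on $U_1$. By the rigidity of the normalized Cartan geometry associated to a conformal structure in dimension $\geq 3$ (recalled in Section \ref{ss:cartanGeometryConformal}), any conformal vector field vanishing on a nonempty open set must vanish on all of $M$. This yields $X = \lambda_0 Y$ globally, and the case $Y \equiv 0$ is trivial. The Lorentzian ``in particular'' assertion then follows at once, since in Lorentzian signature two nonzero isotropic vectors that are mutually orthogonal necessarily lie on the same null line.

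The one step requiring genuine care is the pointwise tensor identity $\alpha \otimes \beta + \beta \otimes \alpha = c\, g_x$: the assumption $n \geq 3$ is used precisely here, as the argument would fail in dimension~$2$. Everything else is either a direct computation or a standard consequence of the Cartan-geometric formulation of conformal structures.
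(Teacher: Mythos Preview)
Your proof is correct and follows essentially the same approach as the paper: both reduce to the Leibniz identity $d\lambda \otimes Y^{\flat} + Y^{\flat} \otimes d\lambda = (\sigma - \lambda\tau)\,g$ on the open set where one field is nonvanishing, deduce that the proportionality function is locally constant, and then globalize via the rigidity of conformal Killing fields. The only difference is cosmetic: you conclude $d\lambda = 0$ by a rank comparison (rank $\leq 2$ versus rank $n \geq 3$), while the paper plugs in specific test vectors $Y \in Z^{\perp}\setminus X^{\perp}$ and $Z \in X^{\perp}$; your argument is slightly cleaner and makes the role of the hypothesis $n \geq 3$ more transparent.
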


\begin{proof}
If $X \neq 0$, let $U \subset M$ be an open subset on which $X(x) \neq 0$. There is a smooth map $f$ defined on $U$ such that $Y = fX$ over $U$.  If we prove that $f$ is constant, then the lemma is established because $Y$ will coincide with a scalar multiple of $X$ on a non-trivial open subset of $M$.

So we are reduced to observe that for $X$ a non-vanishing conformal Killing vector field of $(M,g)$ and $f \in \mathcal{C}^{\infty}(M)$, if $fX$ is conformal, then $f$ is constant.  From the existence of $\phi,\psi \in \mathcal{C}^{\infty}(M)$ such that $\mathcal{L}_X g=\phi g$ and $\mathcal{L}_{fX}g=\psi g$, we deduce that for all vector fields $Y,Z$,
\begin{align*}
(\phi-f\psi)g(Y,Z) = (Y.f) g(X,Z) + (Z.f)g(X,Y).
\end{align*}
If we choose $Z \in X^{\perp}$ non collinear to $X$,  and if $Y \in Z^{\perp} \setminus X^{\perp}$,  then we get $Z.f = 0$. The same holds for all $Z \in X^{\perp}$ by density.  Now, because $X$ does not vanish, there exists an isotropic vector field $Y$ such that $g(X,Y) \neq 0$.  From the same relation with $Y=Z$,  we obtain $2(Y.f) g(X,Y) = 0$,  so $Y.f = 0$,  and because $Y \notin X^{\perp}$,  we deduce $\d f = 0$, as announced.
\end{proof}

\section{Inessential actions}
\label{s:inessential}

Let $H<\Conf(M,g)$ be a subgroup, which could be non-closed \textit{a priori}. Its conformal distortion with respect to $g$ is an $\R_{>0}$-valued cocycle $\lambda : H \times M \to \R_{>0}$ defined by $[\phi^* g]_x= \lambda(\phi,x) g_x$ for all $\phi \in H$,  $x \in M$.  The $H$-action is inessential if and only if there exists a function $\phi : M \to \R_{>0}$ such that $\lambda(h,x) = \phi(h.x)\phi(x)^{-1}$,  the invariant metric being $g/\phi$.

\subsection{Inessential nilpotent groups}
\label{ss:inessential_nilpotent}

We consider a connected nilpotent Lie group $N$ and a compact Lorentzian manifold $(M,g)$ on which $N$ acts conformally with discrete kernel.  We stress that the action is just assumed to be an immersion $N \to \Conf(M,[g])$ which could be non-proper.  We note $k+1$ the nilpotence degree of $\n$, \textit{i.e.} such that $\n_k$ is the last non-zero term in the lower central series of $\n$.  For all $i \leq k$ let $N_i$ denote the connected Lie sugroup of $N$ tangent to $\n_i$.

We start with an elementary proposition for the abelian case ($k=1$),  which is (1)(b) in Theorem \ref{thm:nilpotent}.  We stress that no compactness is needed.

\begin{proposition}
\label{prop:inessential_abelian}
Assume that $N \simeq \R^m$ is abelian. If all $N$-orbits have dimension greater than $1$, then $N$ is inessential. 
\end{proposition}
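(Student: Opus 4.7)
The plan is to build an $N$-invariant conformal factor directly out of the pseudo-norms of the infinitesimal generators. Fix a basis $X_1,\dots,X_m$ of $\n$ and, for $x \in M$, let $Q(x)$ denote the symmetric $m \times m$ matrix with entries $g_x(\bar X_i, \bar X_j)$. The first step is to exploit abelianness of $N$: since $[\bar X_i, \bar X_j]=0$, each $\bar X_i$ is preserved by the flow of every other generator, so that $d_x\phi(\bar X_i(x)) = \bar X_i(\phi(x))$ for every $\phi \in N$. Combined with $(\phi^*g)_x = \lambda(\phi,x)\,g_x$, this yields the transformation law
\[
Q(\phi(x)) = \lambda(\phi,x)\,Q(x), \qquad \phi \in N,\ x \in M.
\]

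The main obstacle, and essentially the only non-formal step, is to rule out the vanishing of $Q(x)$, so that some scalar polynomial invariant of $Q$ provides a positive renormalizing function. Now $Q(x)=0$ is equivalent to $g_x$ being identically zero on $\n(x):=\Span\{\bar X_i(x)\}_{i=1}^m$, i.e., to $\n(x)$ being totally isotropic; but in Lorentzian signature every totally isotropic subspace has dimension at most $1$. By hypothesis $\dim \n(x) = \dim N.x \geq 2$ for all $x$, hence $Q(x) \neq 0$ everywhere on $M$. This is the only place where the Lorentzian signature is actually used, which also explains why no compactness of $M$ is needed in the remark following the statement.

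Set $F(x) := \sqrt{\Tr(Q(x)^2)}$. Because $Q$ is everywhere nonzero and symmetric, $F$ is a smooth, strictly positive function on $M$, and the transformation law above immediately gives $F \circ \phi = \lambda(\phi,\cdot)\,F$ for every $\phi \in N$. A direct computation then shows that $\phi^*(g/F) = (\lambda(\phi,\cdot)\,g)/(\lambda(\phi,\cdot)\,F) = g/F$, so $g' := g/F$ is a metric in $[g]$ invariant under the whole group $N$, proving that $N$ is inessential.
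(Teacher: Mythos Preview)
Your proof is correct and essentially identical to the paper's: the paper's renormalizing function is $\varphi(x) = \bigl(\sum_{i,j} g_x(X_i,X_j)^2\bigr)^{1/2}$, which is exactly your $F(x)=\sqrt{\Tr(Q(x)^2)}$ since $Q$ is symmetric. You spell out a bit more explicitly why abelianness gives the transformation law and where the Lorentzian hypothesis enters, but the argument is the same.
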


\begin{proof}
Let $\lambda : N \times M \rightarrow \R_{>0}$ be the conformal distortion of the $N$-action with respect to $g$. Pick $(X_1,\ldots,X_m)$ a basis of $\n$ and consider the function $\varphi$ defined on $M$ by 
\begin{align*}
\varphi(x) = \left ( \sum_{1\leq i,j \leq m} g_x(X_i,X_j)^2 \right )^{1/2}.
\end{align*}
By assumption, $\varphi$ does not vanish. So, it is a smooth positive function which satisfies $\varphi(g.x) = \lambda(g,x) \varphi(x)$ for every $g \in N$ because $N$ is abelian,  proving that $N$ is inessential.
\end{proof}

\begin{remark}
This observation generalizes immediately to pseudo-Riemannian signature $(p,q)$, assuming that orbits have dimension greater than $\min(p,q)$.
\end{remark}

\begin{remark}
\label{rem:spacelikeVectorFields}
Proposition \ref{prop:inessential_abelian} can be compared to a well known argument (see for instance \cite{obata70}, Theorem 2.4).  For $X$ a conformal vector field of a pseudo-Riemannian manifold $(M,g)$,  if $ g(X,X)$ does not vanish,  then the renormalized metric $g/|g(X,X)|$ is preserved by $X$.
\end{remark}

We now prove (1)(a) in Theorem \ref{thm:nilpotent}.  It follows that a conformal Lorentzian action of an abelian Lie group of rank at least $2$ is always inessential over an open-dense subset.

\begin{proposition}
\label{prop:locallyFreeOpenDense}
Let $N$ be an abelian Lie group acting conformally on a Lorentzian manifold $(M,g)$.  Then,  $N$ acts locally freely over an open-dense subset of $M$.
\end{proposition}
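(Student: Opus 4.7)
The plan is to work directly with the rank function $r(x) = \mathrm{rank}(\mathrm{ev}_x\colon\n\to T_xM)$, which is lower semicontinuous, so its maximum $r_{\max}$ is attained on an open dense set $U\subset M$; the target is to prove $r_{\max} = \dim\n$, which is exactly local freeness on $U$.

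Suppose for contradiction that $r_{\max} < k := \dim\n$, fix $x_0 \in U$, and choose a basis $X_1,\dots,X_k$ of $\n$ such that $\bar X_1,\dots,\bar X_{r_{\max}}$ are linearly independent at $x_0$ (hence on a neighborhood $V_0\subset U$ of $x_0$), and $X_{r_{\max}+1},\dots,X_k\in\n_{x_0}$. On $V_0$ each $\bar X_j$ with $j>r_{\max}$ lies in $\Span(\bar X_i : i\leq r_{\max})$, so there exist smooth functions $c_{ji}$ on $V_0$ with $c_{ji}(x_0)=0$ and $\bar X_j = \sum_{i\leq r_{\max}} c_{ji}\,\bar X_i$. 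If each $c_{ji}$ is locally constant, then $c_{ji} \equiv 0$ near $x_0$, so $\bar X_j\equiv 0$ on a non-empty open set; the finite-type property of the conformal Killing equation in dimension $n\geq 3$ (a nonzero conformal vector field cannot vanish on a non-empty open set) combined with local faithfulness then yields $X_j=0$, contradicting the choice of basis.

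The local constancy of the $c_{ji}$ would be obtained by generalizing Lemma \ref{lem:collinearVectorFields}. First, the commutation relations $[\bar X_l,\bar X_j]=[\bar X_l,\bar X_i]=0$ yield $\bar X_l\cdot c_{ji}=0$, so each $c_{ji}$ is constant along $\Span(X_i : i\leq r_{\max})$-orbits. Next, equating $\L_{\bar X_j} g = \phi_j g$ with $\L_{\sum c_{ji}\bar X_i} g$ produces the tensorial identity
$$\sum_i (dc_{ji}\otimes\alpha_i + \alpha_i\otimes dc_{ji}) = \mu_j\, g, \qquad \alpha_i := g(\bar X_i,\cdot),$$
for a smooth function $\mu_j$, and evaluating this on $(\bar X_l, Z)$ using $\bar X_l\cdot c_{ji}=0$ reduces it to the linear system of $1$-forms $\sum_i g(\bar X_i,\bar X_l)\, dc_{ji} = \mu_j\alpha_l$. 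When the Gram matrix $G = (g(\bar X_i,\bar X_l))_{i,l\leq r_{\max}}$ is invertible on an open subset of $V_0$, inverting this system and contracting with $\bar X_m$ forces $\mu_j=0$ and then $dc_{ji}=0$, completing the argument in that case.

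The main obstacle is the degenerate case when $\Span(\bar X_i : i\leq r_{\max})$ is everywhere $g$-degenerate on $V_0$. Here the Lorentzian signature is crucial: the radical has dimension at most one, so $\ker G$ is one-dimensional and generated by some smoothly varying $\eta(x)\in\R^{r_{\max}}$ such that $\sum \eta_i\bar X_i$ is a nonzero isotropic vector field tangent to the orbits. Contracting the $1$-form system with $\eta$ still forces $\mu_j=0$, so that $(dc_{j1},\dots,dc_{jr_{\max}})$ is pointwise proportional to $\eta$; concluding $dc_{ji}=0$ requires combining the closedness of the $1$-forms $dc_{ji}$, the invariance $\bar X_l\cdot c_{ji}=0$, and the geometric constraints that the conformal Killing equation imposes on the varying isotropic direction. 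This degenerate case does not occur in the Riemannian analogue, where $g|_{\Span(\bar X_i)}$ is automatically positive definite, which accounts for the additional subtlety required in the Lorentzian setting.
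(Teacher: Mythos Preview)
Your approach is genuinely different from the paper's, and the non-degenerate case is correct. However, you have left a real gap: the degenerate case is not proved, only described. You write that concluding $dc_{ji}=0$ ``requires combining the closedness of the $1$-forms $dc_{ji}$, the invariance $\bar X_l\cdot c_{ji}=0$, and the geometric constraints that the conformal Killing equation imposes on the varying isotropic direction,'' but you do not carry this out. As written, the proof is incomplete.

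The gap is in fact easy to close, and none of the ingredients you list are needed. You have already obtained $\mu_j=0$, so the original tensorial identity becomes $\sum_i(dc_{ji}\otimes\alpha_i+\alpha_i\otimes dc_{ji})=0$. Since $(dc_{j1},\dots,dc_{jr_{\max}})$ lies pointwise in $\ker G=\R\eta$, write $dc_{ji}=\eta_i\,h_j$ for a single $1$-form $h_j$. Substituting gives $h_j\otimes\beta+\beta\otimes h_j=0$ where $\beta:=\sum_i\eta_i\alpha_i=g(\sum_i\eta_i\bar X_i,\cdot)$. The vector $\sum_i\eta_i\bar X_i$ is nonzero (the $\bar X_i$ are independent on $V_0$), hence $\beta\neq 0$, and a symmetric product of $1$-forms $h_j\odot\beta$ vanishes only if one factor does. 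Thus $h_j=0$ and $dc_{ji}=0$, completing the argument. One small further point: your deduction ``then $c_{ji}\equiv 0$ near $x_0$'' is unnecessary and slightly misleading, since constancy may be established on an open set not containing $x_0$; but constancy on \emph{any} nonempty open set already forces $\bar X_j-\sum_i a_{ji}\bar X_i$ to vanish there, hence identically, contradicting linear independence in $\n$.

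For comparison, the paper's proof is dynamical rather than tensorial: on the open set where orbit dimension is locally constant, a one-parameter subgroup $\{\phi^t\}$ in the isotropy acts trivially on $T_x(N.x)$ and on the quotient $T_xM/T_x(N.x)$ (by commutativity and the local foliation), which forces $d_x\phi^t=\id$ via a lemma from \cite{melnick_pecastaing}; the resulting singularity of order $2$ then attracts nearby points by \cite{frances_melnick_normal_forms}, contradicting the foliated structure. Your argument, once completed, is more elementary and self-contained: it is a direct extension of Lemma~\ref{lem:collinearVectorFields} to several pointwise-dependent conformal vector fields, avoiding the Cartan-geometric and dynamical machinery entirely.
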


\begin{proof}
The approach follows the main argument \cite{dambra},  which was adapted in \cite{melnick_pecastaing}, §2.2, to the conformal setting,  but formulated in the context of a generalization of D'Ambra's theorem,  so non-valid in general. 

The map $x \in M \mapsto \dim N.x \in \mathbf{N}$ being lower semi-continuous,  there exists an open-dense subset $\Omega$ of $M$ on which it is locally constant.  Let $x \in \Omega$ and suppose to the contrary $\dim N.x < \dim N$.  Let $\{\phi^t\} \subset N_x$ be a one-parameter subgroup fixing $x$.  Since the $N$-orbits foliate trivially a neighborhood of $x$,  and $N$ being abelian,  it follows that $d_x \phi^t$ acts trivially on $T_x(N.x)$ and on $T_xM / T_x(N.x)$.  By Lemma 2.4 of \cite{melnick_pecastaing},  we obtain $\d_x \phi^t = \id$.  

Thus,  the flow $\phi^t$ has a singularity of order $2$ at $x$.  The proof of \cite{frances_melnick_normal_forms} shows that there are points $y$ arbitrarily close to $x$ such that $\phi^t(y) \to x$.   This contradicts the fact that a neighborhood of $x$ is trivially foliated by the $N$-orbits,  since,  replacing $y$ by $\phi^T(y)$ for $T$ large enough,  $y$ would have to belong to $N.x$,  the latter being fixed pointwise by $\phi^t$.
\end{proof}

For the non-abelian case,  we prove the following proposition (case (2) of Theorem \ref{thm:nilpotent}).

\begin{proposition}
\label{prop:inessential_nonabelian}
Assume that $N$ is non-abelian.  Then, $N$ is inessential if and only if $N_k$ acts locally freely everywhere.  
\end{proposition}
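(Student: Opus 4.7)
Both directions rest on the centrality of $\n_k$ in $\n$: since $\Ad(N)$ fixes $\n_k$ pointwise, every $Z \in \n_k$ yields an $N$-invariant conformal vector field $\bar Z$, and the function $h_Z(x) := g_x(\bar Z_x, \bar Z_x)$ satisfies the cocycle relation $h_Z(g.x) = \lambda(g,x)\, h_Z(x)$ for all $g \in N$ and $x \in M$.

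For the direction ``$N$ inessential $\Rightarrow N_k$ locally free'', after a conformal change we may assume $N \subset \Isom(M,g)$, so that each $\bar Z$ with $Z \in \n_k$ is a Killing vector field and $h_Z$ is $N$-invariant. Suppose some $Z \in \n_k \setminus \{0\}$ vanishes at a point $x_0$; then its $1$-jet $A := \nabla \bar Z|_{x_0}$ belongs to $\so(T_{x_0}M, g_{x_0})$, and rigidity of Killing fields together with local faithfulness force $A \neq 0$. Centrality $[\bar Y, \bar Z] = 0$ for every $Y \in \n$ forces $A$ to annihilate the orbit tangent space $T_{x_0}(N.x_0)$. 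Writing $Z$ as an iterated bracket along a Heisenberg chain inside $\n$ and combining this with Lemma \ref{lem:zero_nilpotentalgebra} and Lemma \ref{lem:zero_kilingfield} forces $A \equiv 0$, which is the desired contradiction.

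For the converse direction, pick a basis $(Z_1, \ldots, Z_m)$ of $\n_k$ and set
\begin{equation*}
\varphi(x) := \Bigl(\sum_{i,j=1}^{m} g_x(\bar Z_i, \bar Z_j)^{2}\Bigr)^{1/2}.
\end{equation*}
The cocycle relations combine to give $\varphi(g.x) = \lambda(g,x)\, \varphi(x)$, so the conformal metric $g/\varphi$ is $N$-invariant provided $\varphi > 0$ everywhere. The locus $\{\varphi = 0\}$ consists of the points $x$ at which $\n_k(x) \subset T_xM$ is totally isotropic -- a subspace of dimension at most $1$ in Lorentzian signature -- and local freeness gives $\dim \n_k(x) = \dim \n_k$; hence $\varphi > 0$ identically as soon as $\dim \n_k \geq 2$.

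The delicate case is $\dim \n_k = 1$, which is where the main obstacle lies. Write $\n_k = \R Z$ and assume for contradiction that $h_Z$ vanishes somewhere; its zero locus is closed and $N$-invariant, and by amenability of $N$ it contains a compact minimal $N$-invariant subset carrying an $N$-invariant probability measure $\mu$. Apply Corollary \ref{cor:virtual_isotropy} at a $\mu$-generic point $x$: since $\Ad(N) \subset \GL(\n)$ is unipotent, so is its Zariski closure $\bar N_d$; and a unipotent conformal transformation of a quadratic form is automatically isometric, so $\bar N_d$ preserves $q_x$ on $\n/\n_x$. Choosing an $\heis$-triple $(X, U, Z)$ in $\n$, which exists because $\n_k \subset [\n, \n]$, the identity $\Ad(e^{tX})\,U = U + tZ$ combined with $q_x$-isometry of $\bar N_d$ forces $q_x(U, Z) = 0$; applied to every pair $(X, U)$ with $[X, U] \in \n_k$, this shows that $\bar Z_x$ lies in the $q_x$-radical of the $N$-orbit tangent space. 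Combining this orthogonality with the $N$-equivariance of $\bar Z$ and the commutation relations among the flows of $\n$ yields the sought contradiction, establishing $\varphi > 0$ everywhere.
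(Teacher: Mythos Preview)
Your converse direction (``$N_k$ locally free $\Rightarrow$ $N$ inessential'') has a fatal gap: the renormalization function $\varphi$ you build from $\n_k$ is \emph{identically zero}.  Indeed, the paper's Lemma~\ref{lem:heis3_triple_inessential} shows that for any $\heis$-triple $(X,Y,Z)$ in $\n$ with $Z\in\n_k$, the field $\bar Z$ is everywhere isotropic, i.e.\ $g(\bar Z,\bar Z)\equiv 0$.  Moreover, Lemma~\ref{lem:nk_dim1} then forces $\dim\n_k=1$, so your ``easy case'' $\dim\n_k\geq 2$ never occurs, and in the remaining case your $\varphi$ vanishes identically.  Your final paragraph does not yield a contradiction: you assume $h_Z$ vanishes at some $x$ and deduce that $\bar Z_x$ lies in the radical of the orbit tangent space --- but that is exactly what $h_Z(x)=0$ says; it is consistent with (and in fact is) the true situation, so there is no ``sought contradiction'' to be found.

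The paper's key idea, which you are missing, is to renormalize by $g(X,X)$ for a \emph{non-central} element $X\in\n_{k-1}\setminus\z(\n)$.  Lemma~\ref{lem:heis3_triple_inessential} (whose proof combines Corollary~\ref{cor:virtual_isotropy} with a separate argument using the boundedness of the conformal distortion of the inessential flow $\phi_X^t$) shows that such an $X$ is everywhere \emph{spacelike}, and a direct computation using $(\phi_T^t)_*X=X-t[T,X]$ together with the orthogonality relations $g(Z',Z')=g(X,Z')=0$ for $Z'=[T,X]$ gives the cocycle identity $\lambda(e^{tT},x)\,g_x(X,X)=g_{\phi_T^t(x)}(X,X)$ for every $T\in\n$.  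In short: $\n_k$ is the \emph{lightlike} part of the orbit, hence useless for renormalization; the spacelike directions live one step up in the lower central series.

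Your forward direction is also incomplete.  You correctly note that $A=\nabla\bar Z|_{x_0}\in\so(T_{x_0}M,g_{x_0})$ annihilates $T_{x_0}(N.x_0)$, but the sentence ``writing $Z$ as an iterated bracket along a Heisenberg chain and combining with Lemmas~\ref{lem:zero_nilpotentalgebra} and~\ref{lem:zero_kilingfield} forces $A\equiv 0$'' is not an argument: neither lemma bears on $A$, and the orbit tangent space may be too small to force an element of $\so(T_{x_0}M)$ to vanish.  The standard route (which the paper cites) is to first establish $g(\bar Z,\bar Z)\equiv 0$ via the commutation $(\phi_Y^t)_*X=X-tZ$ and compactness, and only then invoke Lemma~\ref{lem:zero_kilingfield}.
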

 
 \begin{remark}
 \label{rem:isometric_heis}
The fact that $N$ is inessential implies that $N_k$ acts locally freely is already known (for instance \cite{adams_stuck}, Corollary 6.3).  We point out that it follows independently of Lemma \ref{lem:Znull_translation} bellow: if the action is isometric,  then no non-trivial one-parameter subgroup has a trivial $1$-jet at a point,  hence the compact subset $K$ in question is empty.
\end{remark}  
 
 \begin{proof}
We start with the following intermediate result.
\begin{lemma}
\label{lem:heis3_triple_inessential}
Let $\h$ be a Lie algebra of conformal vector fields of $(M,g)$,  isomorphic to $\heis(3)$  and let $Z$ be a non-trivial element in the center.  If $\h$ acts locally freely,   then  $\h(x) \perp Z_x$ everywhere. In particular, $Z$ is everywhere isotropic and any $X \in \h \setminus \R.Z$ is space-like everywhere.	
\end{lemma}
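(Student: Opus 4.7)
The plan is to combine Corollary \ref{cor:virtual_isotropy} (which will deliver the orthogonality $\h\perp Z$ on a set of full measure for any invariant probability) with a recurrence/rigidity argument on compact $M$ to promote the conclusion to every point.

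Fix a basis $(X,Y,Z)$ of $\h\cong\heis(3)$ with $[X,Y]=Z$ and $[X,Z]=[Y,Z]=0$, and write $\L_W g = \phi_W g$ for each $W\in\h$. By Leibniz one has $W\cdot g(Z,Z) = \phi_W\, g(Z,Z)$ for every $W\in\h$, so $\gamma := g(Z,Z)$ transforms as a conformal density and the sign-components $\{\gamma>0\}, \{\gamma<0\}$ are $H$-invariant; the off-diagonal pairings satisfy $Y\cdot g(X,Z) = \phi_Y\, g(X,Z) - \gamma$ and $X\cdot g(Y,Z) = \phi_X\, g(Y,Z) + \gamma$, with the other $W\cdot g(X,Z), W\cdot g(Y,Z)$ multiplicative. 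Amenability of $\Heis(3)$ and compactness of $M$ now produce an $H$-invariant probability $\mu$; the image of $\Ad\colon H\to\GL(\h)$ is the two-dimensional abelian unipotent subgroup generated by $Y\mapsto Y+tZ$ and $X\mapsto X-sZ$, and coincides with its own discompact radical since unipotent groups admit no proper algebraic cocompact normal subgroup. Local freeness gives $\h_x=0$, so Corollary \ref{cor:virtual_isotropy} asserts that $\Ad(H)$ conformally preserves $[q_x]$ for $\mu$-a.e.\ $x$. A direct matrix computation with the two unipotent generators shows that the only such preserved rays on $\h$ are those represented by a form $Q$ satisfying $Q(Z,\cdot)=0$; hence $g(X,Z), g(Y,Z), g(Z,Z)$ all vanish on a set of full $\mu$-measure.

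To upgrade from $\mu$-a.e.\ to everywhere, assume $\gamma(x_0)>0$ and set $U = \{\gamma>0\}$, a non-empty open $H$-invariant subset on which $g' := g/\gamma$ is an $H$-invariant Lorentzian metric with $g'(Z,Z)=1$. The smooth function $a := g'(X,Z)$ on $U$ satisfies $\L_Y a = -g'(Z,Z) = -1$, so $a(\phi_Y^s(x)) = a(x)-s$ along every $Y$-orbit in $U$. A $Y$-recurrent point $x\in U$ would force $a(x)=-\infty$ by continuity, so every point of $U$ is $Y$-wandering; combined with the fact from the previous paragraph that no $H$-invariant measure charges $U$ (its support must lie in $\{\gamma=0\}\cap\overline U = \partial U$) and with an amenable-minimality argument---every closed $H$-invariant subset of the compact set $\overline U$ contains an $H$-minimal piece which carries an $H$-invariant measure and hence sits in $\partial U$---one concludes $U=\emptyset$; symmetrically $\{\gamma<0\}=\emptyset$, so $\gamma\equiv 0$. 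Once $\gamma\equiv 0$, the identities from the first paragraph show that $\alpha := g(X,Z)$ and $\beta := g(Y,Z)$ also transform multiplicatively under $H$, and the same rescaling/recurrence scheme applied on $\{\alpha>0\}$ (using that for $g' := g/\alpha$ one has $\L_Y g'(X,X) = -2g'(X,Z) = -2$) and on $\{\beta>0\}$ forces $\alpha\equiv\beta\equiv 0$. This proves $\h(x)\perp Z_x$ at every $x$.

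The "in particular" statements follow at once: $Z$ is null, and the induced positive-definite form on $Z_x^\perp/\R Z_x$ in Lorentzian signature forces every $X\in\h(x)\setminus\R Z_x$ to be space-like. I expect the upgrade step---the passage from $\mu$-a.e.\ to everywhere via the "$Y$-wandering plus amenable-minimality $\Rightarrow U=\emptyset$" argument---to be the main technical obstacle, since Corollary \ref{cor:virtual_isotropy} alone only delivers a conclusion on a set of full $\mu$-measure and promoting it to an everywhere statement requires carefully combining dynamical non-recurrence on $U$ with the measure-theoretic and topological structure of the $H$-action on $\overline U$.
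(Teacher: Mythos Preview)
Your application of Corollary~\ref{cor:virtual_isotropy} in the first paragraph is essentially correct, though note a technical point: Theorem~\ref{thm:bfm} requires the ambient group to act via a \emph{proper} Lie group homomorphism, and the integral subgroup $H$ corresponding to $\h$ need not be closed in $\Conf(M,[g])$.  The paper handles this by passing to the closure $H'$ (shown to remain $2$-step nilpotent in Lemma~\ref{lem:closure_nilpotent}) and applying the corollary to the pair $(H',S)$ with $S=\{e^{tX}\}$.  This is easily patched.

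The real problem is your upgrade step.  The ``amenable-minimality'' argument does not give $U=\emptyset$: knowing that every $H$-minimal closed subset of $\overline U$ lies in $\partial U$, and that every $H$-invariant probability on $\overline U$ is supported in $\partial U$, simply does \emph{not} force $U$ to be empty.  Think of a north--south flow on $\S^1$: all invariant measures and all minimal sets live at the two fixed points, yet the open arc between them is non-empty.  Your observation that $a(\phi_Y^s x)=a(x)\pm s$ shows only that each $Y$-orbit in $U$ escapes every compact subset of $U$ and accumulates on $\partial U$; this is perfectly compatible with $U\neq\emptyset$.  The same defect recurs in your treatment of $\alpha$ and $\beta$.

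The paper avoids this difficulty altogether by a different and much simpler device.  Rather than fixing one global invariant measure and then trying to upgrade, it exploits the freedom in Corollary~\ref{cor:virtual_isotropy} to choose the invariant set.  For a fixed non-central $X\in\h$, the set $\{x: g_x(X,X)\leq 0\}$ is compact and $S$-invariant (because $g(X,X)$ transforms multiplicatively under $\phi_X^t$).  If it were non-empty, the corollary would produce a point in it where $g_x(X,X)>0$, a contradiction; hence $X$ is space-like everywhere.  Once this is known, $X$ is inessential (Remark~\ref{rem:spacelikeVectorFields}), so the conformal distortion of $\phi_X^t$ is bounded, and the identities you wrote down in your first paragraph can be exploited \emph{directly} (bounded function equals linear-in-$t$ plus bounded forces the linear coefficient to vanish) to get $g(Z,Z)\equiv 0$ and then $g(Y,Z)\equiv 0$.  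Note that $\{g(X,X)\leq 0\}$ is only $S$-invariant, not $H$-invariant, which is why the paper works with the one-parameter subgroup $S$ rather than all of $H$.
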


\begin{proof}
Let $H'$ be the closure of the connected subgroup $H<\Conf(M,[g])$ corresponding to $\h$.  Then,  $H'$ is a $2$-step nilpotent Lie subgroup of $\Conf(M,[g])$ due to the following.   

\begin{lemma}
\label{lem:closure_nilpotent}
Let $G$ be a Lie group and $N<G$ an integral subgroup such that $\n$ is $2$-step nilpotent.  Then,  the closure $N'$ of $N$ in $G$ is a $2$-step nilpotent Lie subgroup of $G$. 
\end{lemma}

\begin{proof}
$N$ is $2$-step nilpotent as an abstract group, as can be easily seen using Baker-Campbell-Hausdorff formula.  Since $\mathcal{Z}(N) \subset \mathcal{Z}(N')$ and the latter is closed, it follows that $\bar{\mathcal{Z}(N)} \subset \mathcal{Z}(N')$.  Therefore,  the commutator $[x,y]$ of two elements of $N'$ must be central in $N'$,  because it is the limit of commutators in $N$, which are central in $N$.  It implies that $N'$ is itself $2$-step nilpotent, as an abstract group.  We are left to verify that a Lie group which is $2$-step nilpotent has a Lie algebra with the same property.  

Let $Z$ be the center of $N'$, and let $\z$ be its Lie algebra.  Fix $X,Y \in \n'$. For any $s,t \in \R$,  the BCH formula gives an explicit formula for the element $Z(s,t) \in \h$ such that 
\begin{align*}
e^{tX}e^{sY}e^{-tX}e^{-sY} = e^{s\Ad(e^{tX})Y} e^{-sY} = e^{Z(s,t)}.
\end{align*}
For $s,t$ sufficiently small,  $Z(s,t) \in \z$.  Consequently,  seen as an analytic map in $s$,  its first order term $\Ad(e^{tX})Y-Y$ belongs to $\z$.  Taking now derivative at $t=0$,  we obtain $[X,Y] \in \z$,  proving that $\n'$ is $2$-step nilpotent.
\end{proof}

Let $X \in \h$ be a non-central element and $S  = \{e^{tX}\}_{t\in \R} < H'$.  As $\h'$ is nilpotent,  the discompact radical of $S$ coincides with $\Ad_{\h'}(S)$.  
We apply Corollary \ref{cor:virtual_isotropy}  to this pair $(H',S)$: any closed, $S$-invariant  subset of $M$ contains a point $x$ such that $\Ad_{\h'}(S)$ preserves $\h_x'$ and $\Ad_{\h'}(S) \subset \Conf(\h'/\h_x',[q_x])$.  Since the action of $\Ad_{\h'}(S)$ on $\h '/ \h_x'$ is unimodular,  it is in fact isometric with respect to $q_x$.  Let $Y \in \h$ such that $Z = [X,Y]$.  We have $b_x(Z,Y) = b_x([X,Y],Y) = -b_x(Y,Z)$, so $b_x(Y,Z) = 0$, and similarly we obtain $b_x(Z,Z)=0$.  By density,  it follows that $Z$ is isotropic and orthogonal to the projection of $\h$ in $\h' / \h_x'$.  Consequently,  for any $X \in \h \setminus \z(\h)$,  $g_x(X,X)>0$.

Recall that this is true for some $x$ in the arbitrary compact $S$-invariant subset we picked.  Now consider $\{x \in M \ : \ g_x(X,X) \leq 0\}$.  It is of course compact,  but also $S$-invariant as $S$ acts conformally.  Therefore, this subset is empty and we obtain that $X$ is everywhere space-like as expected.   Thus,  we have shown that any non-central vector field of $\h$ is everywhere space-like.

As recalled in Remark \ref{rem:spacelikeVectorFields},  $X$ is inessential,  so that the conformal distortion of its flow has range in a compact sub-interval of $\R_{>0}$.  For $x \in M$,  we note $x_t = e^{tX}.x$.  Since $Z$ is central, $\lambda(e^{tX},x)  g_x(Z,Z)=g_{x^t}(Z,Z)$,  and similarly for $g(X,Z)$.  Consequently  either $g_x(Z,Z)=0$,  or the function $t \mapsto g_{x_t}(Z,Z)$ is bounded away from $0$, and similarly for $g(X,Z)$.  Pick now $Y \in \n$ such that $[X,Y]=Z$.  From the relation $\lambda(e^{tX},x) g_x(Y,Z)  = g_{x_t}(Y,Z) - t g_{x_t}(Z,Z)$,  we deduce $g_x(Z,Z)=0$,  proving that $Z$ is everywhere isotropic and $g_{x_t}(Y,Z)=\lambda(e^{tX},x) g_x(Y,Z)$ is either constant equal to $0$,  or bounded away from $0$.  Consequently,  using that $\lambda(e^{tX},x)g_x(Y,Y) = g_{x_t}(Y,Y)-2tg_{x_t}(Y,Z)$ for all $x \in M$ and $t \in \R$,  we get that $g_x(Y,Z) = 0$.  This conclusion is valid for any element $Y$ which does not centralize $X$,  so it must be valid for all elements of $\h$ by continuity.
\end{proof}

Let us observe now that $\dim N_k = 1$ and that its orbits are $1$-dimensional and isotropic.  For any $X \in \n_{k-1}$,  if $Y \in \n$ is chosen such that $Z:=[X,Y] \neq 0$,  then $(X,Y,Z)$ span a Lie algebra to which we can apply the previous lemma.  Indeed,  $Z \in \n_k$ is non-zero so must be central and nowhere vanishing by hypothesis and we can apply	 Lemma \ref{lem:zero_nilpotentalgebra}.  In particular,  for any $X\in \n_{k-1}$ and $Y \in \n$,  the vector field $[X,Y]$ is everywhere isotropic.  

\begin{lemma}
\label{lem:nk_dim1}
If,  for all $X \in \n_{k-1}$ and $Y \in \n$,   the vector field $[X,Y]$ is nowhere vanishing and isotropic,  then $\dim \n_k = 1$.
\end{lemma}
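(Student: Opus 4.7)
The hypothesis ensures that every bracket $[X,Y]$ with $X\in\n_{k-1}$, $Y\in\n$ defines a nowhere-vanishing isotropic conformal vector field. Since such brackets span $\n_k$, I can pick a basis $Z_i = [X_i, Y_i]$, $i = 1, \ldots, \dim\n_k$, each of which is therefore nowhere vanishing and everywhere light-like. The strategy is to show that any two such basis elements $Z_i, Z_j$ are pointwise orthogonal; being also everywhere light-like, Lemma \ref{lem:collinearVectorFields} will then force them to be scalar multiples. Since the action is locally faithful, this yields $Z_j \in \R Z_i$ inside $\n_k$, contradicting linear independence unless $\dim\n_k = 1$.

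To establish $g(Z_i, Z_j) \equiv 0$ for $i \neq j$, I adapt the argument of Lemma \ref{lem:heis3_triple_inessential}. The subspace $\h := \Span(X_i, Y_i, Z_i, Z_j)$ is a $2$-step nilpotent subalgebra of $\n$ (since $Z_i, Z_j \in \n_k$ are central). Let $H$ be the corresponding integral subgroup and $H'$ its closure in $\Conf(M,[g])$; by Lemma \ref{lem:closure_nilpotent}, $H'$ remains $2$-step nilpotent, so $\ad(X_i)$ is nilpotent on $\h'$. The one-parameter group $S = \{e^{tX_i}\}$ thus has discompact radical equal to $\Ad(S) \subset \GL(\h')$, a unipotent, hence unimodular, subgroup. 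Corollary \ref{cor:virtual_isotropy} then yields, in every compact $S$-invariant subset of $M$, a point $x_0$ at which $\Ad(S)$ acts isometrically on $\h'/\h'_{x_0}$ with respect to $q_{x_0}$. Differentiating gives $q_{x_0}([X_i, V], W) = -q_{x_0}(V, [X_i, W])$ for all $V, W \in \h'$; specializing to $V = Y_i$, $W = Z_j$ and using $[X_i, Z_j] = 0$,
\[
g_{x_0}(Z_i, Z_j)\ =\ q_{x_0}([X_i, Y_i], Z_j)\ =\ -q_{x_0}(Y_i, [X_i, Z_j])\ =\ 0.
\]

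To propagate this pointwise vanishing to all of $M$, I invoke Lemma \ref{lem:heis3_triple_inessential} applied to the $\heis$-triple $(X_i, Y_i, Z_i)$, whose corresponding $\Heis(3)$-action is locally free because $Z_i$ never vanishes (Remark \ref{rem:heis_localement_libre}): the vector field $X_i$ is therefore everywhere spacelike. Consequently $\tilde g := g/g(X_i,X_i)$ is a genuine $X_i$-invariant Lorentzian metric on $M$, and since $[X_i, Z_i] = [X_i, Z_j] = 0$, the function $\tilde g(Z_i, Z_j)$ is $X_i$-invariant on $M$. Any of its nonzero level sets would therefore be a compact $X_i$-invariant subset of $M$ on which $\tilde g(Z_i, Z_j)$ takes a single nonzero value, contradicting the preceding paragraph. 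Hence $\tilde g(Z_i, Z_j) \equiv 0$, so $g(Z_i, Z_j) \equiv 0$, and the proof concludes as outlined.

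The main obstacle is the propagation step: turning the pointwise identity produced by Zimmer's embedding theorem into a global one. This works only thanks to the renormalization by $g(X_i, X_i)$, which in turn is available precisely because Lemma \ref{lem:heis3_triple_inessential} already provides that $X_i$ is everywhere spacelike. Without this, the best one could extract from Corollary \ref{cor:virtual_isotropy} would be a pointwise statement, insufficient for applying Lemma \ref{lem:collinearVectorFields}.
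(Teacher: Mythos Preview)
Your proof is correct, but it takes a heavier route than the paper's. You re-invoke Corollary \ref{cor:virtual_isotropy} and the propagation technique from the proof of Lemma \ref{lem:heis3_triple_inessential} to establish $g(Z_i,Z_j)\equiv 0$, and only then apply Lemma \ref{lem:collinearVectorFields}. The paper's argument is more elementary and uses nothing beyond the stated hypothesis and Lemma \ref{lem:collinearVectorFields}. Its key observation is that for any fixed $X\in\n_{k-1}$ one has $\dim[X,\n]\leq 1$: indeed $[X,\n]=\{[X,Y]:Y\in\n\}$ consists entirely of single brackets, so by hypothesis every nonzero element is nowhere vanishing and isotropic; hence for any $x\in M$ the evaluation $Z\mapsto Z_x$ embeds $[X,\n]$ as a totally isotropic subspace of the Lorentzian space $T_xM$, forcing dimension at most $1$. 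Given two single brackets $Z_1=[X_1,Y_1]$ and $Z_2=[X_2,Y_2]$, a short case analysis (on whether the cross-brackets $[X_1,Y_2]$, $[X_2,Y_1]$ vanish) then exhibits $Z_1+Z_2$ itself as a single bracket, hence isotropic, and Lemma \ref{lem:collinearVectorFields} gives $Z_1\propto Z_2$ directly. Your approach has the merit of being conceptually uniform with Lemma \ref{lem:heis3_triple_inessential}, but the paper's avoids any further appeal to the embedding theorem and is noticeably shorter.
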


\begin{proof}
For any $x \in M$ and $X \in \n_{k-1}$,  the linear map $Z \in [X,\n] \mapsto Z_x \in T_xM$ is injective and its range is totally isotropic.  It follows that $\dim [X,\n] \leq 1$ for all $X \in \n_{k-1}$.  

Now, let $Z_1 = [X_1,Y_1]$ and $Z_2 = [X_2,Y_2]$ with $X_i \in \n_{k-1}$ and $Y_i \in \n$.  We distinguish two cases:
\begin{enumerate}
\item $[X_1,Y_2]=[X_2,Y_1]=0$.  In this case,  $Z_1 +Z_2 = [X_1+X_2,Y_1+Y_2] \in [X_1+X_2,\n]$ is everywhere isotropic.  The same being true for $Z_1$ and $Z_2$,  it implies that they are proportional by Lemma \ref{lem:collinearVectorFields}.
\item Either $[X_1,Y_2] \neq 0$ or $[X_2,Y_1] \neq 0$.  Let us assume $[X_1,Y_2] \neq 0$.  Because $\dim [X_1,\n] \leq 1$,  there exists $\lambda \in \R$ such that $Z_1 = [X_1,Y_1] = \lambda[X_1,Y_2]$. Therefore, $Z_1+Z_2=[\lambda X_1 + X_2,Y_2] \in [\lambda X_1 + X_2,\n]$.  Similarly to the first case,  we deduce that $Z_1$ is collinear to $Z_2$.
\end{enumerate}
We deduce that $\n_k=[\n_{k-1},\n]$ is a line as claimed. 
\end{proof}

By hypothesis,  the flow corresponding to $\n_k$ acts with no singularity on $M$ and has isotropic orbits by Lemma \ref{lem:heis3_triple_inessential}.
 
Finally,  let us prove that $N$ is inessential.  Let $X \in \n_{k-1} \setminus \z(\n)$.  Let $Y \in \n$ which does not commute with $X$ and let $Z = [X,Y] \in \n_k$.  Then,  $X,Y,Z$ span a Lie algebra $\h$ isomorphic to $\heis(3)$ and satisfying the hypothesis of Lemma \ref{lem:heis3_triple_inessential}. In particular, $g(X,X) > 0$ everywhere.  Let $T \in \n$ and let us note $Z' := [T,X]$. 
We distinguish two cases: 
\begin{enumerate}
\item If $Z'=0$,  then $\lambda(e^{tT},x) g_x(X,X)=g_{\phi_T^t(x)}(X,X)$ for all $x \in M$ and $t\in \R$.  
\item  If $Z' \neq 0$, then $(X,T,Z')$ is another $\heis(3)$-triple to which we can apply Lemma \ref{lem:heis3_triple_inessential}. Therefore, $g(Z',Z') = 0$ and $g(X,Z') = 0$ everywhere.  From $(\phi_T^t)_* X_x = X_{\phi_T^t(x)} - t Z'_{\phi_T^t(x)}$,  we then obtain $\lambda(e^{tT},x) g_x(X,X) = g_{\phi_T^t(x)}(X,X)$ as in the first case.  
\end{enumerate}
As the function $\varphi  = g(X,X)$ is everywhere positive,  we get that the conformal distortion of any one-parameter subgroup in $N$ is a  coboundary defined by this map.  So,  all of $N$ preserves $g/\varphi$ by connecteness. 
 \end{proof}

\subsection{Proof of Corollary \ref{cor:inessential_nilradical}}
\label{ss:proof_inessentialNilradical}

We prove in this section that Theorem \ref{thm:inessential_nilradical} implies Corollary \ref{cor:inessential_nilradical} by a standard averaging argument.  We will use the same notations $G$,  $R$ and $N$ and we assume that $G/R$ is compact and  that the nilradical $N $ is inessential.  By Theorem \ref{thm:inessential_nilradical},  $R$ is inessential.  

Consider $K<G$ a compact Levi factor, \textit{i.e.}  a compact semi-simple Lie group such that $\Conf(M,g)_0 = KR$ and $K \cap R$ is finite.  Observe that $K$ preserves the set of $R$-invariant metrics in the conformal class because it normalizes $R$.  Hence,  by averaging in this convex set of metrics, we obtain a conformal metric which is both $K$-invariant and $R$-invariant. 

\subsection{Inessential abelian nilradical}
\label{ss:inessential_abelian}

We prove in this section Theorem \ref{thm:inessential_nilradical} in the case where the nilradical $N \! \triangleleft  R$ is abelian.  If $N=R$,  the statement is obvious,  so we assume that $\gr \neq \n$ is not nilpotent.  We prove that $\gr \simeq \aff(\R) \oplus \R^k$ and that $R$ is inessential.  

Since $\n$ is abelian and $[\gr,\gr] \subset \n$,  if $\rho  : \gr \to \gl(\n)$ denotes the representation $\rho(X) = \ad(X)|_{\n}$, then $\rho$ has abelian image.  

\begin{lemma}
For all $X \in \gr \setminus \n$, we have $\dim [X,\n]=1$.
\end{lemma}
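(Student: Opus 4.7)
The plan is to argue by contradiction, assuming some $X \in \gr \setminus \n$ satisfies $\dim [X,\n] \geq 2$, and using the Lorentzian geometry and compactness of $M$ to reach an impossibility.

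First, I would establish the lower bound $\dim [X, \n] \geq 1$ by showing $\ker \rho = \n$. If $X \in \gr \setminus \n$ had $\rho(X) = 0$, then $\R X + \n$ would be abelian (since $X$ centralizes the abelian $\n$) and an ideal of $\gr$ (because $[\gr, X] \subset [\gr, \gr] \subset \n$), hence a nilpotent ideal of $\gr$ strictly larger than the nilradical $\n$---contradiction.

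Second, I would exploit the $N$-invariant metric. Since $N$ is inessential, fix $g$ in the conformal class with $N \subset \Isom(M, g)$, and for $X \in \gr$ write $\mathcal{L}_X g = \varphi_X g$. For $Y_1, Y_2 \in \n$, the Lie derivative identity yields
\[
\mathcal{L}_X \bigl[g(Y_1, Y_2)\bigr] \;=\; \varphi_X\, g(Y_1, Y_2) + g(\rho(X) Y_1, Y_2) + g(Y_1, \rho(X) Y_2).
\]
Integrating along the flow of $X$, the $N$-invariant symmetric bilinear form $B(x) := g_x|_{\n \times \n}$ on $\n$ transforms as
\[
B(\phi_X^t(x)) \;=\; \lambda(t, x)\, e^{t \rho(X)^{\top}} B(x)\, e^{t \rho(X)},
\]
where $\lambda(t, x)$ is the conformal distortion of $\phi_X^t$. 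In particular, for eigenvectors $Y_\alpha, Y_\beta \in \n^{\C}$ of $\rho(X)$ with eigenvalues $\alpha, \beta$, one obtains $g(Y_\alpha, Y_\beta) \circ \phi_X^t = \lambda(t, \cdot)\, e^{t(\alpha + \beta)}\, g(Y_\alpha, Y_\beta)$.

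Third, suppose $\dim [X, \n] \geq 2$. Apply Lie's theorem to the abelian subalgebra $\rho(\gr) \subset \gl(\n)$ to obtain a simultaneous generalized weight decomposition of $\n^{\C}$. The hypothesis then splits into two archetypal sub-cases. If $\rho(X)^2 = 0$ with rank $\geq 2$, there exist $Y_1, Y_2 \in \n$ with $Z_i := [X, Y_i]$ linearly independent in $\n$; the triples $(X, Y_i, Z_i)$ and $(X, Y_1 + Y_2, Z_1 + Z_2)$ are $\heis(3)$-triples, so Lemma~\ref{lem:heis3_triple_inessential} forces $Z_1$, $Z_2$ and $Z_1 + Z_2$ to be everywhere isotropic, whence $g(Z_1, Z_2) \equiv 0$; Lemma~\ref{lem:collinearVectorFields} then forces $Z_1$ and $Z_2$ to be proportional as vector fields on $M$, contradicting their linear independence in $\n$. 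In the semisimple sub-case, $\rho(X)$ admits two linearly independent eigenvectors $Y_\alpha, Y_\beta$ with distinct nonzero eigenvalues; the three pairings $g(Y_\alpha, Y_\alpha)$, $g(Y_\alpha, Y_\beta)$, $g(Y_\beta, Y_\beta)$ are bounded on compact $M$, while the evolution shows that if any two of them were non-zero on a common orbit of $X$, then two of the distinct exponential rates $2\alpha$, $\alpha + \beta$, $2\beta$ would have to be simultaneously absorbed by the single scalar cocycle $\lambda$---impossible. Hence at each point $x$, at most one of these three pairings is non-zero; but on the open dense subset where $N$ acts locally freely (Proposition~\ref{prop:locallyFreeOpenDense}), $(Y_\alpha)_x$ and $(Y_\beta)_x$ are linearly independent, so $\Span(Y_\alpha, Y_\beta)_x \subset T_x M$ would be a two-dimensional totally isotropic subspace---impossible in Lorentzian signature.

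The main obstacle is the case analysis of the third step: one must merge the Jordan-semisimple decomposition of $\rho(X)$ into these two archetypes, handle complex-conjugate weight pairs by working on their real form, and verify the local-freeness hypothesis of Lemma~\ref{lem:heis3_triple_inessential} via Remark~\ref{rem:heis_localement_libre} together with Lemma~\ref{lem:zero_kilingfield} applied to the isotropic Killing fields $Z_i$.
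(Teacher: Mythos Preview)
Your case split misses the mark. The ``nilpotent sub-case'' $\rho(X)^2=0$ is in fact vacuous: if $\rho(X)$ were nilpotent for some $X\in\gr\setminus\n$, then $\R X+\n$ would be a nilpotent ideal of $\gr$ strictly larger than the nilradical (by essentially the same argument you give for $\ker\rho=\n$). So the actual obstructions to $\dim[X,\n]\leq 1$---a single nonzero eigenvalue of multiplicity $\geq 2$, a Jordan block for a nonzero eigenvalue, a complex-conjugate pair---are never of the $\heis$-triple form $(X,Y_i,Z_i)$ with $[X,Z_i]=0$, and neither of your two archetypes handles them. Even setting this aside, the semisimple sub-case has gaps: boundedness of the three pairings on compact $M$ gives only \emph{upper} bounds $\lambda(t,x)=O(e^{-2\alpha t})$ etc., never a lower bound, so distinct rates do not by themselves contradict one another; and ``at most one pairing nonzero'' produces at best a degenerate $2$-plane, not a totally isotropic one, which is perfectly allowed in Lorentzian signature.

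The paper's argument avoids any case split by flowing along $Y\in\n$ rather than $X$. Since $N$ is isometric for $g$, there is no cocycle: for any $Z=[X,Y]$ with $Y\in\n$ one has $[Y,Z]=0$, hence
\[
g_x(X,Z)\;=\;g_{\phi_Y^t(x)}(X,Z)\;+\;t\,g_x(Z,Z),
\]
and boundedness of $g(X,Z)$ on $M$ forces $g(Z,Z)\equiv 0$ directly. Then each nonzero $Z\in[X,\n]$ is an isotropic Killing field, so nowhere vanishing by Lemma~\ref{lem:zero_kilingfield}; the evaluation $Z\mapsto Z_x$ therefore embeds $[X,\n]$ as a totally isotropic subspace of the Lorentzian space $T_xM$, giving $\dim[X,\n]\leq 1$.
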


\begin{proof}
We first note that $\n = \ker \rho$ because $\R.X + \n$ is nilpotent if and only if $X \in \n$ by definition of the nilradical.  So, if $X \notin \n$, there exists $Y \in \n$ such that $Z = [X,Y]\neq 0$.  Since $(\phi_Y^t)_* X_x= X_{\phi_Y^t(x)} + tZ_{\phi_Y^t(x)} = X_{\phi_Y^t(x)} + t(\phi_Y^t)_*Z_x $ and $[Y,Z]=0$, we obtain for all $x \in M$ and $t \in \R$
\begin{align*}
g_x(X,Z) = g_{\phi_Y^t(x)}(X,Z) + t g_x(Z,Z),
\end{align*}
proving that $g_x(Z,Z)=0$.  Since $Z$ is moreover a Killing vector field of $(M,g)$,  Lemma \ref{lem:zero_kilingfield} implies that it does not vanish anywhere.  As a consequence,  for any $x \in M$, the map $Z \in [X,\n] \mapsto Z_x \in T_xM$ is injective and has isotropic image,  and we get $[X,\n]$ has dimension $1$.
\end{proof}

It follows that for $X \in \gr \setminus \n$, all eigenvalues of $\rho(X)^{\C} \in \gl(\n^{\C})$ are real because the existence of a non-real one would imply that of a real plane in the image of $\rho(X)$.  It also has exactly one non-zero eigenvalue,  and the corresponding eigenspace is $1$-dimensional, because if all eigenvalues were zero, then $\rho(X)$ would be nilpotent, implying $X \in \n$.  Since $\rho(X)$ has rank $1$,  it follows that it is diagonalisable over $\R$. Using that the $\rho(X)$, $X \in \gr$, form a commutative family,  we deduce that $\gr \simeq \aff(\R) \oplus \R^k$ for $k \geq 0$.

\begin{lemma}
\label{lem:affine_inessential}
Let $X,Y$ be non-zero conformal vector fields of $(M,g)$, with $[X,Y] = Y$ and $Y$ inessential. Then the Lie algebra they span is inessential, and $g(X,X) > 0$ everywhere.
\end{lemma}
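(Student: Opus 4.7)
The plan is to first replace $g$ by a $Y$-invariant metric $g'$ in the conformal class (possible since $Y$ is inessential), write $\mathcal{L}_X g' = \alpha' g'$, and use $[\mathcal{L}_X,\mathcal{L}_Y]g' = \mathcal{L}_{[X,Y]}g' = \mathcal{L}_Y g' = 0$ to deduce that $Y.\alpha' = 0$. Then the standard invariant metric $g' / g'(X,X)$ will be the candidate for a metric preserved by the whole Lie algebra $\langle X, Y \rangle$, provided that I can show $g'(X,X) > 0$ everywhere. The essential input is compactness of $M$.

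I would first analyze the restriction of $g'$ to the $\{X,Y\}$ pair along $Y$-orbits. Using $\mathcal{L}_Y g' = 0$ and $[Y,X] = -Y$, direct computation gives
\begin{align*}
Y.g'(Y,Y) &= 0, & Y.g'(X,Y) &= -g'(Y,Y), & Y.g'(X,X) &= -2\, g'(X,Y).
\end{align*}
Hence, for fixed $x \in M$, the function $t \mapsto g'(X,X)(\phi_Y^t x)$ is a polynomial of degree at most two in $t$. By compactness of $M$ this function is bounded, hence constant; the vanishing of the coefficients of $t$ and $t^2$ yields $g'(X,Y) \equiv 0$ and $g'(Y,Y) \equiv 0$ on $M$, and $g'(X,X)$ is $Y$-invariant. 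Lemma \ref{lem:zero_kilingfield} applied to the Killing field $Y$ then gives that $Y$ is nowhere vanishing. In Lorentzian signature, orthogonality to the null vector $Y_x \neq 0$ forces $g'(X,X)(x) \geq 0$, with equality if and only if $X_x \in \R Y_x$.

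The main obstacle is ruling out points where $X$ is proportional to $Y$. Let $E = \{x \in M : X_x \in \R Y_x\}$, a closed (hence compact) subset, on which the smooth function $\lambda$ defined by $X = \lambda Y$ is bounded. From $\mathcal{L}_Y X = -Y$ one gets $(\phi_Y^t)_* X = X + tY$, and therefore, for $x_0 \in E$ with $X_{x_0} = \lambda_0 Y_{x_0}$,
\begin{align*}
X_{\phi_Y^t x_0} = (\lambda_0 - t)\, Y_{\phi_Y^t x_0}.
\end{align*}
In particular $\phi_Y^t x_0 \in E$ for all $t \in \R$, and $\lambda(\phi_Y^t x_0) = \lambda_0 - t$ is unbounded, contradicting the boundedness of $\lambda$ on the compact set $E$. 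Thus $E = \emptyset$ and $g'(X,X) > 0$ everywhere on $M$.

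To conclude, set $\psi = g'(X,X)$ and $g'' = g'/\psi$. Since $\psi$ is $Y$-invariant, $\mathcal{L}_Y g'' = 0$. Using $\mathcal{L}_X X = 0$ one finds $X.\psi = \alpha' \psi$, so $\mathcal{L}_X g'' = (-X.\psi/\psi + \alpha')\, g'' = 0$ as well. Hence $g''$ is a metric in $[g]$ preserved by both $X$ and $Y$, which proves that $\langle X,Y \rangle$ is inessential. Finally $g(X,X) = (g/g')(X,X)\cdot g'(X,X) > 0$ everywhere since $g$ and $g'$ are conformally equivalent.
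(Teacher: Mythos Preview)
Your proof is correct and follows essentially the same approach as the paper's: pass to a $Y$-invariant metric, use a polynomial-boundedness argument along $Y$-orbits to obtain $g'(Y,Y)=g'(X,Y)=0$ and the $Y$-invariance of $g'(X,X)$, invoke Lemma~\ref{lem:zero_kilingfield} to get $Y$ nowhere vanishing, then rule out collinearity of $X$ and $Y$ by a flow argument and conclude with the renormalized metric $g'/g'(X,X)$. The only differences are cosmetic: you compute via Lie derivatives where the paper writes the pushforward relation $(\phi_Y^t)_*X = X + tY$ explicitly, and for the linear-independence step you track the proportionality coefficient $\lambda$ on the set $E$, whereas the paper equivalently supposes some $Z\in\Span(X,Y)$ vanishes and derives the same unboundedness contradiction. (Your opening remark about $Y.\alpha'=0$ is not actually used later and can be dropped.)
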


\begin{proof}
Using that $Y$ is a Killing vector field of some conformal metric, which we may assume to be $g$,  we obtain that for all $x \in M$,  the function $t \mapsto g_{\phi_Y^t(x)}(Y,Y)$ is constant.  Because $\Ad(e^{tY}).X =  X - tY$,  we also have $g_x(X,Y) = g_{\phi_Y^t(x)}(X,Y) + t g_{\phi_Y^t(x)}(Y,Y)$,  showing that $g(Y,Y) = 0$ identically and $t \mapsto g_{\phi_Y^t(x)}(X,Y)$ is constant.  Similarly,  $g_x(X,X) = g_{\phi_Y^t(x)}(X,X)+2t g_{\phi_Y^t(x)}(X,Y)$ also implies $g(X,Y)=0$.  By Lemma \ref{lem:zero_kilingfield}, it follows that $Y$ does not vanish. Therefore, $\Span(X_x,Y_x)$ is $2$-dimensional everywhere because if $Z \in \Span(X,Y)$ vanishes at a point $x$,  then $[Z,Y]=\alpha Y$ for $\alpha \neq 0$,  and then $0 = (\phi_Y^t)_* Z_x = Z_{\phi_Y^t(x)} + \alpha t Y_{\phi_Y^t(x)}$ for all $t \in \R$, contradicting the fact that $Z$ is bounded and $Y$ never vanishes.

Therefore,  $g(X,X) > 0$ everywhere and since $g_x(X,X) = g_{\phi_Y^t(x)}(X+tY,X+tY) = g_{\phi_Y^t(x)}(X,X)$,  we obtain that $g/g(X,X)$ is preserved by both $X$ and $Y$.
\end{proof}

Finally,  when $\gr \neq \n$,  any $\aff(\R)$ factor in $\gr$ satisfies the hypothesis of this lemma.  If we choose any $X \in \gr \setminus \n$,  then we have $g(X,X)>0$ everywhere and $g/g(X,X)$ is $R$-invariant,  proving Theorem \ref{thm:inessential_nilradical} in the case where $N$ is abelian.

\subsection{Inessential non-abelian nilradical}
\label{ss:inessential_nonAbelian}

In this section, we treat the case of a non-abelian nilradical for the proof of Theorem \ref{thm:inessential_nilradical}. 

Let us assume that the nilradical $N \triangleleft \, R$ preserves a metric $g$.  We denote by $\lambda : R \times M \rightarrow \R_{>0}$ the conformal distortion with respect to $g$. Let $\z = [\n,\n]$ and let $\zn$ be the center of $\n$. According to Proposition \ref{prop:inessential_nonabelian} and Remark \ref{rem:isometric_heis}, $\z \subset \zn$ is a line.
We fix $Z \in \z$ a non-zero element.

\begin{lemma}
\label{lem:orthogonality_relations_loc_free}
For all $X \in \n$, $g(X,Z)=0$  and for $X \in \n \setminus \z(\n)$,  $g(X,X)>0$.
\end{lemma}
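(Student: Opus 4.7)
The plan is to split the first assertion into two cases according to whether $X$ is central in $\n$, and to obtain the second assertion as a direct by-product of the non-central case.

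Suppose first $X \in \n \setminus \z(\n)$. By the setup preceding the lemma, $\z = [\n,\n] = \R Z$ is a central line, so there exists $Y \in \n$ with $[X,Y] = \alpha Z$ for some $\alpha \neq 0$; after rescaling $Y$, I may assume $[X,Y] = Z$, so that $(X,Y,Z)$ spans a copy of $\heis(3)$. The one-parameter subgroup tangent to $\R Z$ acts locally freely by Proposition \ref{prop:inessential_nonabelian}, hence $Z$ has no zero, and therefore by Remark \ref{rem:heis_localement_libre} the integral subgroup associated with $\Span(X,Y,Z)$ also acts locally freely. Lemma \ref{lem:heis3_triple_inessential} then applies and delivers both $g(X,Z) = 0$ everywhere and, because $X \notin \R Z$, $g(X,X) > 0$ everywhere. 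This discharges the second assertion of the lemma, and the first one for $X \notin \z(\n)$.

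The harder case is $X \in \z(\n)$: no Heisenberg triple in $\n$ contains $X$, so Lemma \ref{lem:heis3_triple_inessential} cannot be invoked directly. Instead, I fix an arbitrary $\heis$-triple $(X_0,Y_0,Z)$ in $\n$ and exploit that $\phi_{X_0}^t$ is an isometry. Since $X$ and $Z$ centralize $X_0$, both vector fields are $\phi_{X_0}^t$-invariant, whereas the identity $\Ad(e^{-tX_0})Y_0 = Y_0 + tZ$ (using $[X_0,Z]=0$) yields $(\phi_{X_0}^t)_* Y_0 = Y_0 + tZ$ with the paper's sign convention. Substituting into the isometry relation
\[
g_x(X,Y_0) = g_{\phi_{X_0}^t(x)}\bigl((\phi_{X_0}^t)_* X,\,(\phi_{X_0}^t)_* Y_0\bigr)
\]
and using $\phi_{X_0}^t$-invariance of $g(X,Z)$, I obtain
\[
g_{\phi_{X_0}^t(x)}(X,Y_0) = g_x(X,Y_0) - t\, g_x(X,Z).
\]
The left-hand side is bounded in $t$ by continuity of $g(X,Y_0)$ on the compact manifold $M$, whereas the right-hand side is affine in $t$; this forces the slope $g_x(X,Z)$ to vanish at every $x$.

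The main obstacle is precisely this central case, where the Heisenberg-triple machinery has no grip on elements of $\z(\n)$ and one must substitute a global compactness argument in its place. Once the pushforward identity is set up correctly, the decisive trick is the elementary observation that a function on $\R$ which is both bounded and affine has zero slope. Everything else — the existence of $Y$ with $[X,Y]=Z$ for non-central $X$, the local freeness of the $\heis(3)$ subgroup, and the $\phi_{X_0}^t$-invariance of $g(X,Z)$ — is routine Lie-algebraic bookkeeping built on the 2-step nilpotency of $\n$.
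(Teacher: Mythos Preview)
Your proof is correct. The non-central case is handled exactly as in the paper, via Lemma~\ref{lem:heis3_triple_inessential} applied to the $\heis$-triple $(X,Y,Z)$.

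For the central case, however, you take a different and longer route than the paper. You run a dynamical argument: push $Y_0$ along the isometric flow of $X_0$, obtain an identity of the form $g_{\phi_{X_0}^t(x)}(X,Y_0) = g_x(X,Y_0) \mp t\,g_x(X,Z)$, and conclude by boundedness on the compact $M$. This is valid (note a harmless sign slip: in the paper's convention $(\phi_{X_0}^t)_* Y_0 = Y_0 + t[Y_0,X_0] = Y_0 - tZ$, not $Y_0+tZ$; the affine-implies-zero-slope argument is unaffected). The paper instead uses a one-line linearity trick: pick any non-central $X'$ with $[X',Y]=Z$; then $X+X'$ is also non-central (since $[X+X',Y]=Z$), so the first case gives $g(X+X',Z)=0=g(X',Z)$, whence $g(X,Z)=0$. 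Your approach has the mild advantage of being self-contained once the $\heis$-triple is fixed, but the paper's perturbation argument is considerably shorter and avoids re-invoking compactness.
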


\begin{proof}
Let $X \in \n$.  If $X \notin \zn$, there is $Y\in \n$ such that $[X,Y]=Z$ and $(X,Y,Z)$ is an inessential $\heis(3)$-triple.  By Remark \ref{rem:isometric_heis}, the corresponding action of $\Heis(3)$ is everywhere locally free and $g(X,Z) = 0$ and $g(X,X)>0$ everywhere.  If $X \in \zn$ and if $X',Y \in \n$ are such that $[X',Y] = Z$, then we also have $[X+X',Y]=Z$. So, $g(X+X',Z) = g(X',Z)=0$ and we get $g(X,Z)=0$.
\end{proof}

Note that $\z$ and $\zn$ are also ideals of $\gr$.  

\begin{lemma}
\label{lem:[g,n]cap zn}
We have $[\gr,\n] \cap \zn \subset \z$. In particular, $[\gr,\zn] \subset \z$. 
\end{lemma}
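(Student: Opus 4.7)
The plan is to reduce the claim to a collinearity argument via Lemma~\ref{lem:collinearVectorFields}. Fix $T \in \gr$, $X \in \n$ with $W := [T,X] \in \zn$; the goal is $W \in \R Z = \z$. I would first show that $g(W,W) \equiv 0$ on $M$. Granted this, $W$ is a Killing field (as $N$ is isometric and $W \in \n$) that is everywhere isotropic, so Lemma~\ref{lem:zero_kilingfield} forces $W$ to be either identically zero or nowhere vanishing. In the nontrivial case, Lemma~\ref{lem:orthogonality_relations_loc_free} gives $g(W,Z) \equiv 0$, and since $Z$ is itself a nowhere-vanishing light-like Killing field, the last assertion of Lemma~\ref{lem:collinearVectorFields} yields that $\bar W$ and $\bar Z$ are proportional, hence $W \in \R Z = \z$.

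The isotropy of $W$ is where the hypothesis $W \in \zn$ enters crucially. Since $[X,W]=0$, the adjoint action of $\bar X$ on $\bar T$ truncates at first order, giving the affine push-forward formula
\[
(\phi_X^{s})_{*}\,\bar T \;=\; e^{-s\,\ad(\bar X)}\,\bar T \;=\; \bar T + s\,\bar W \qquad (s \in \R).
\]
Because $X \in \n$ is Killing, $\phi_X^{s}$ is an isometry of $g$, so applying the above to $g(\bar T,\bar T)$ produces
\[
g_{\phi_X^{s}(x)}(T,T) \;=\; g_x(T,T) \;-\; 2s\, g_x(T,W) \;+\; s^{2}\, g_x(W,W), \qquad x \in M,\ s \in \R.
\]
The left-hand side is bounded uniformly in $s$ by compactness of $M$, whereas the right-hand side is a quadratic polynomial in $s$; boundedness forces its degree-one and degree-two coefficients to vanish pointwise, and in particular $g(W,W) \equiv 0$, as desired.

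This establishes the main inclusion $[\gr,\n] \cap \zn \subset \z$. The ``in particular'' statement is then immediate: $\zn$ is characteristic in $\n$ and therefore $\gr$-invariant, so $[\gr,\zn] \subset \zn \cap [\gr,\n] \subset \z$. I expect the only real subtlety to be the derivation of the affine formula $(\phi_X^{s})_{*}\bar T = \bar T + s\bar W$, where careful bookkeeping of sign conventions for push-forwards by flows is required; once this identity is in hand, compactness of $M$ does all the work and the collinearity lemma closes the argument.
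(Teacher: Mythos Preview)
Your proof is correct and follows essentially the same route as the paper: use the isometric flow $\phi_X^s$ and the affine push-forward formula to get a polynomial in $s$ that must be bounded by compactness, forcing $g(W,W)\equiv 0$; then combine with $g(W,Z)\equiv 0$ from Lemma~\ref{lem:orthogonality_relations_loc_free} to conclude collinearity with $Z$. The only cosmetic difference is in the final step: the paper picks a single $\lambda$ so that $W-\lambda Z$ vanishes at one point and invokes Lemma~\ref{lem:zero_kilingfield} directly, whereas you package the same conclusion through Lemma~\ref{lem:collinearVectorFields}; both endgames are equivalent.
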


\begin{proof}
Let $X \in \gr$ and $X_0 \in \n$. Assume that $X_1 = [X,X_0] \in \zn$. Let $x \in M$. Because $X_0$ and $X_1$ commute and $N$ is isometric with respect to $g$, we have $g_{\phi_{X_0}^t(x)}(X_1,X_1) = g_x(X_1,X_1)$ for all $t \in \R$.  Using $\Ad(e^{tX_0})X = X - t X_1$ and the corresponding action of the flow of $X_0$ on $X$,  we get
\begin{align*}
g_x(X,X) = g_{\phi_{X_0}^t(x)}(X,X) - 2t g_{\phi_{X_0}^t(x)}(X,X_1) + t^2 g_x(X_1,X_1).
\end{align*}
It follows that $g_x(X_1,X_1) = 0$ everywhere.  Since we also have $g(X_1,Z) = 0$ by Lemma \ref{lem:orthogonality_relations_loc_free},
 it means that $(X_1)_x$ is collinear to $Z_x$ for all $x \in M$.  Thus, for some $\lambda \in \R$, $X_1 - \lambda Z$ is a light-like Killing vector field of $(M,g)$ which vanishes at some point. According to Lemma  \ref{lem:zero_kilingfield},  it follows that $X_1-\lambda Z = 0$, \textit{i.e.} $X_1 \in \z$ as claimed.
\end{proof}

It is a general fact that $[\gr,\gr]$ is a nilpotent ideal of $\gr$, so $[\gr,\gr] \subset \n$ and $\gr / \n$ is abelian.  Consider the representation induced by the adjoint representation
\begin{align*}
 \rho : \gr / \n \rightarrow \gl(\n/\zn).
 \end{align*}

\begin{lemma}
\label{lem:purely_imaginary}
For all $\bar{X} \in \gr / \n$,  all the eigenvalues of $\rho(\bar{X})^{\C}$ are purely imaginary.
\end{lemma}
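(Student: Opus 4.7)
The plan is to argue by contradiction: suppose $\rho(\bar{X})^{\C}$ has an eigenvalue $\lambda$ with $\Rea(\lambda) \neq 0$, and, replacing $X$ by $-X$ if necessary, set $a := \Rea(\lambda) > 0$. I aim to derive incompatible growth rates for the conformal distortion $\lambda_t(x)$ of $\phi_X^t$, and in a degenerate case to complete the contradiction using the Lorentzian bound on totally isotropic subspaces.

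First I would lift the $\lambda$-eigenspace from $\n/\zn$ to $\n/\z$. By Lemma \ref{lem:[g,n]cap zn} we have $\ad(X)(\zn) \subset \z$, so the induced operator of $\ad(X)$ on $\zn/\z$ vanishes, and since $\lambda \neq 0$ the generalized $\lambda$-eigenspace of $\rho(\bar X)$ in $\n/\zn$ lifts isomorphically to one of $\ad(X)$ in $\n/\z$. Picking $Y \in \n \setminus \zn$ realizing this eigenspace modulo $\z$ (or a real pair $(Y_1,Y_2)$ when $\lambda = a+ib$ is complex), a direct computation of the series yields $e^{t\ad X} Y = e^{at} Y + \eta_t Z$ for some scalar $\eta_t$ (even when $\ad(X)$ on $\z$ has the same eigenvalue as $a$). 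Since $g(\cdot, Z)|_{\n} = 0$ by Lemma \ref{lem:orthogonality_relations_loc_free}, the $\z$-term is invisible to $g$-norms, and the conformal relation yields
\begin{align*}
e^{2at}\, g_{\phi_X^t(x)}(Y,Y) \;=\; \lambda_t(x)\, g_x(Y,Y).
\end{align*}
Because $g(Y,Y) > 0$ is bounded on the compact manifold $M$ (Lemma \ref{lem:orthogonality_relations_loc_free}), one concludes $\lambda_t(x) \asymp e^{2at}$; in particular $\lambda_t(x) \to +\infty$ as $t \to +\infty$. The complex case follows in the same way by summing $g(Y_1,Y_1) + g(Y_2,Y_2)$ and exploiting the orthogonality of the rotation.

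Next I would exploit the symplectic-like structure on $\n/\zn$. The Lie bracket descends to a non-degenerate $\z$-valued skew form $\omega$ on $\n/\zn$ (its radical is $\zn/\zn = 0$), and the Jacobi identity combined with $[X,Z] = \mu Z$ yields $\omega(\rho(\bar X)u, v) + \omega(u, \rho(\bar X)v) = \mu\,\omega(u,v)$. Hence the spectrum of $\rho(\bar X)^{\C}$ is symmetric about $\mu/2$: for each eigenvalue $\lambda$ the value $\mu - \lambda$ is also one. If $\mu \neq 2a$, this dual eigenvalue has real part $\mu - a \neq a$, and applying the previous growth analysis to a dual eigenvector forces $\lambda_t(x)$ to behave simultaneously like $e^{2at}$ and $e^{2(\mu - a)t}$, impossible on the compact $M$.

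The main obstacle is the degenerate case $\mu = 2a$. Here I would combine $\L_X g = \sigma_X g$ with the ODE $X(g(X,V)) = \sigma_X g(X,V) + g(X,[X,V])$ to propagate vanishings. Boundedness of $g(X,X)(\phi_X^t(x)) = g(X,X)(x)\lambda_t(x)$ forces $g(X,X) \equiv 0$; the analogous equation $g(X,Z)(\phi_X^t(x)) = g(X,Z)(x)\lambda_t(x)e^{\mu t} \asymp g(X,Z)(x) e^{4at}$ forces $g(X,Z) \equiv 0$; iterating along the generalized eigenspace filtration of $\ad(X)$ on $\n$ (using $g(X,Z)=0$ to absorb the $\z$-contributions of brackets) gives $g(X,V) \equiv 0$ for every $V \in \n$. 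The only eigenvalue $\beta$ of $\ad(X)|_{\n}$ for which this induction could stall is $\Rea(\beta) = -2a$; but such a $\beta$ would descend to an eigenvalue of $\rho(\bar X)$ whose dual $\mu - \beta$ has real part $4a$, producing the impossible growth $\lambda_t \asymp e^{8at}$. Finally, $X$ is $g$-orthogonal to itself and to all of $\n$, so $\R X_x + \R Z_x$ is $g_x$-totally isotropic at every $x \in M$; the Lorentzian signature forces $X_x \in \R Z_x$ pointwise, and Lemma \ref{lem:collinearVectorFields} produces a constant $c$ with $X = cZ$. Since $X \notin \n$ we must have $c = 0$, but then $\bar X = 0$ on $M$ contradicts the local faithfulness of the $R$-action. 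The hard part is controlling the side cases of this eigenspace induction and verifying that the chosen lift of $Y$ makes the $\eta_t Z$ term indeed invisible to the $g$-norm throughout.
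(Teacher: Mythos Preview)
Your overall strategy matches the paper's: force exponential behaviour of the conformal distortion $\lambda_t$ from an eigenvector in $\n\setminus\zn$, deduce $g(X,X)\equiv 0$, then $g(X,Z)\equiv 0$, and conclude via Lemma~\ref{lem:collinearVectorFields} that $X$ is proportional to $Z$, which is absurd since $X\notin\n$. But there is a genuine gap in your degenerate case $\mu=2a$, and the symplectic detour is not needed.

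\textbf{The error.} Your formula $g(X,Z)(\phi_X^t(x)) = g(X,Z)(x)\,\lambda_t(x)\,e^{\mu t}$ is inconsistent with your earlier formula $e^{2at}g_{\phi_X^t(x)}(Y,Y)=\lambda_t(x)g_x(Y,Y)$. Whichever sign convention you use for $(\phi_X^t)_*$, one checks that $g_{\phi_X^t(x)}(X,Z) \asymp e^{\pm(2a-\mu)t}\,g_x(X,Z)$; when $\mu=2a$ this is bounded and tells you nothing. So the $\phi_X^t$-flow alone cannot yield $g(X,Z)\equiv 0$ in the degenerate case, and your subsequent induction (which needs $g(X,Z)=0$ as input to ``absorb the $\z$-contributions'') never gets started. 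The fix is to use the \emph{isometric} flow of $Z$: from $(\phi_Z^t)_*X = X+\mu tZ$ and $g(Z,Z)=0$ one gets $g_x(X,X)=g_{\phi_Z^t(x)}(X,X)+2\mu t\,g_x(X,Z)$; since $g(X,X)\equiv 0$ this forces $g(X,Z)\equiv 0$ whenever $\mu\neq 0$ (and if $\mu=0$ the $\phi_X^t$-flow does work). This is exactly what the paper does.

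\textbf{The detour is unnecessary.} Once you see that $g(X,Z)\equiv 0$ follows from the $Z$-flow argument for \emph{every} value of $\mu$, the symplectic pairing on $\n/\zn$ and the reduction to $\mu=2a$ become irrelevant: the paper goes straight from ``some eigenvalue has nonzero real part'' to $g(X,X)=g(X,Z)=0$ without any case distinction on $\mu$. Likewise your induction proving $g(X,V)\equiv 0$ for all $V\in\n$ is superfluous; you only need $g(X,X)=g(X,Z)=g(Z,Z)=0$, which already makes $\Span(X_x,Z_x)$ totally isotropic, so $X_x\in\R Z_x$ pointwise in Lorentzian signature, and Lemma~\ref{lem:collinearVectorFields} finishes.
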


\begin{proof}
Assume to the contrary that $1+i\theta$ is a complex eigenvalue of some $\rho(\bar{X})^{\C}$.  It implies that there exist non-zero vectors $\bar{X_0}$ and $\bar{Y}_0$ in $\n / \zn$ such that 
\begin{align*}
\begin{cases}
\rho(\bar{X}) \bar{X_0} = \bar{X_0} + \theta \bar{Y_0} \\
\rho(\bar{X}) \bar{Y_0} = \bar{Y_0} - \theta \bar{X_0} .
\end{cases}
\end{align*}
Assume first that $\theta \neq 0$ so that $\bar{X_0}$ and $\bar{Y_0}$ are linearly independent.  Now, for arbitrary $X \in \gr$ and $X_0,Y_0 \in \n$ projecting to $\bar{X}$,  $\bar{X_0}$ and $\bar{Y_0}$ respectively, we have $X_1,Y_1 \in \zn$ such that
\begin{align*}
\begin{cases}
[X,X_0] = X_0 + \theta Y_0 + X_1 \\
[X,Y_0] = Y_0 + \theta X_0 + Y_1.
\end{cases}
\end{align*}

Let us define $X_0'=  X_0 + \frac{1}{1+\theta^2}X_1 - \frac{\theta }{1+\theta^2} Y_1$  and $Y_0' = Y_0 + \frac{\theta }{1+\theta^2} X_1+ \frac{1}{1+\theta^2} Y_1$. Since $[\gr,\zn] \subset \z$ (Lemma \ref{lem:[g,n]cap zn}), we obtain :
\begin{align}
\label{eq:similitude}
\begin{cases}
[X,X_0'] = X_0' + \theta Y_0'  \text{ mod.  } \z\\
[X,Y_0'] = Y_0' - \theta X_0' \text{ mod.  } \z.
\end{cases}
\end{align}
Remark that changing the representative of $\bar{X}$ does not affect these relations.  We may assume that the representatives $X_0$ and $Y_0$ were initially chosen such that (\ref{eq:similitude}) is valid.  We get
\begin{align*}
\Ad(e^{tX})X_0 = e^t(\cos(\theta t)X_0 + \sin(\theta t) Y_0) \text{ mod.  } \z
\end{align*}
and similarly for $Y_0$.  Let us note $X_t = \cos(\theta t)X_0 + \sin(\theta t) Y_0$. Since the projections of $X_0$ and $Y_0$ in $\n / \zn$ are linearly independent,  $P_x = \Span((X_0)_x,(Y_0)_x)$ is a plane on which the metric is everywhere positive-definite (see Remark \ref{rem:X0Positive}).
Hence,  we have positive constants $\alpha,\beta$ such that $\alpha \leq g_x(X_t,X_t) \leq \beta$ for all $t \in \R$ and $x \in M$. Recall that $Z$ is everywhere orthogonal to any vector field of $\n$. Therefore,
\begin{align*}
 \lambda(\phi_X^t,x) g_x(X_0,X_0) = e^{-2t} g_{\phi_X^t(x)}(X_t,X_t).  
 \end{align*}
Thus,  we get $\alpha/\beta \leq e^{2t} \lambda(\phi_X^t,x) \leq \beta / \alpha$ for all $x \in M$ and $t \in \R$.  We also have the relations $\lambda(\phi_X^t,x)g_x(X,X) = g_{\phi_X^t(x)}(X,X)$ for all $t$. So, we must have $g_x(X,X)=0$, for any $x \in M$.  

Since $[\gr,\z] \subset \z$, we have $\mu$ such that $[X,Z]=\mu Z$, therefore $(\phi_Z^t)_* X_x = X_{\phi_Z^t(x)} + \mu t Z_{\phi_Z^t(x)}$.  Using that $\phi_Z^t$ is isometric with respect to $g$ and $g(Z,Z)=0$ everywhere,  this implies $g_x(X,Z) = g_{\phi_Z^t(x)}(X,Z)$ and then
\begin{align*}
g_x(X,X) = g_{\phi_Z^t(x)}(X,X) + 2\mu t g_{x}(X,Z),
\end{align*}
so that either $\mu = 0$, or $g(X,Z) = 0$ everywhere.  If $\mu = 0$,  then we obtain $\lambda(\phi_X^t,x) g_x(X,Z) = g_{\phi_X^t(x)}(X,Z)$, which in turn implies $g_x(X,Z) = 0$ because $\lambda(\phi_X^t,x) \to +\infty$ as $t \to -\infty$.  In both cases,  $g(X,Z) = 0$ everywhere.

Now, we recall that the choice of $X$ as a representative of $\bar{X} \in \gr/\n$ was arbitrary.  In particular, the same conclusions follow for $X+Z$ and we obtain that $X_x$ is everywhere collinear to $Z_x$.  By Lemma \ref{lem:collinearVectorFields}, we obtain that $X$ is proportional to $Z$. This is the desired contradiction because $\ad(Z)$ acts trivially on $\n/\zn$.

Assume now that $\theta  = 0$.  It implies that there exists $X_1 \in \zn$ such that $[X,X_0] = X_0 + X_1$ and $X_0 \in \n \setminus \zn$.  Since $[X,X_1] \in \z$,  we get $[X,X_0'] = X_0' \text{ mod. } \z$,  where $X_0' = X_0+X_1$.  The same approach as above shows that $\lambda(e^{tX},x)$ is unbounded,  so $g(X,X) = 0$ identically and we derive the same contradiction.
\end{proof}

As the algebra $\gr / \n$ is abelian,  there exists a non-zero vector in $(\n/\zn)^{\C}$ which is common eigenvector to all the $\rho(\bar{X})^{\C}$, $X \in \gr$.  By the previous lemma, we obtain a linear function $\theta : \gr \rightarrow \R$ and non-zero vectors $\bar{X}_0,\bar{Y}_0 \in \gr / \zn$ such that 
\begin{align*}
\begin{cases}
\rho(\bar{X}) \bar{X}_0 &= \theta(X) \bar{Y}_0 \\
\rho(\bar{X}) \bar{Y}_0 &= -\theta(X) \bar{X}_0.
\end{cases}
\end{align*} 

\begin{remark}
\label{rem:X0Positive}
For any representatives $X_0,Y_0 \in \n$,  and for all $x \in M$,  the metric $g_x$ is positive definite in restriction to the plane spanned by $X_0,Y_0$ by Lemma \ref{lem:orthogonality_relations_loc_free}.
\end{remark}

Assume first that $\theta \neq 0$ and let us fix $X \in \gr$ such that $\theta(X) = 1$.  As in the proof of Lemma \ref{lem:purely_imaginary},  we can choose representatives $X_0,Y_0 \in \n$ such that
\begin{align*}
\begin{cases}
[X,X_0] = Y_0 \text{ mod. } \z \\
[X,Y_0] = -X_0 \text{ mod. } \z.
\end{cases}
\end{align*}
Thus,  $\Ad(e^{tX})$ acts as a rotation $R^t$ on $\Span(X_0,Y_0)$ modulo $\z$.  Let $\varphi :  M \rightarrow \R_{>0}$ be defined  by 
\begin{align*}
\varphi(x) = \int_S g_x(U,U) \d U
\end{align*}
where $S \subset \Span(X_0,Y_0)$ is the unit circle of the Euclidean norm for which $(X_0,Y_0)$ form an orthonormal basis and $\d U$ the Lebesgue measure on $S$.  For all $t \in \R$, $x \in M$,  and $U \in \Span(X_0,Y_0)$, we have
\begin{align*}
[(\phi_X^t)^*g]_x(U,U) & = g_{\phi_X^t(x)}(R^t(U),R^t(U))) 
\end{align*}
since $Z$ is everywhere isotropic and orthogonal to every vector field of $\n$. So,  we obtain
\begin{align*}
\lambda(\phi_X^t,x) \varphi(x) = \varphi(\phi_X^t(x)).
\end{align*}
By Lemma \ref{lem:[g,n]cap zn},  any $X' \in \ker \theta$, $\Ad(e^{tX'})$ acts trivially on $\Span(X_0,Y_0)$ modulo $\z$.   As a consequence,  $[(\phi_{X'}^t)^*g]_x(U,U) = g_{\phi_{X'}^t(x)}(U,U)$ for every $U \in \Span(X_0,Y_0)$, implying $\lambda(\phi_{X'}^t,x) \varphi(x) = \varphi(\phi_{X'}^t(x))$. Therefore, for any $X \in \gr$, the conformal distortion of $\phi_X^t$ with respect to $g$ is given by $\varphi(\phi_X^t(x)) / \varphi(x)$,  proving that $g / \varphi$ is preserved by all of $R$.  

Finally,  if $\theta = 0$ the arguments of the last paragraph show that if $X_0 \in \n \setminus \zn$ projects to an element in the kernel of all $\rho(\bar{X})$,  $X \in \gr$,  then $\psi(x) := g_x(X_0,X_0)$ is everywhere positive and $g/\psi$ is also $R$-invariant.

\subsection{Remark on the case of a non-compact Levi factor}
\label{ss:remarkLeviFactor}

We conclude this section with the proof of Proposition \ref{prop:essential_noncompact_levi}.  Assume that $N$ is inessential.  Let $\s$ be a Levi factor of $\g$ and assume that its non-compact part $\s'$ is non-trivial.  Consider the restriction of the adjoint representation of $\s'$ to $\n$.  Let $\a \subset \s'$ be an $\R$-split Cartan subalgebra.  If the representation is non-trivial,  considering a non-trivial weight,  there exists $X \in \a$ and $Y \in \n$ such that $[X,Y]=Y$.  Hence,  $g(X,X)>0$ everywhere by Lemma \ref{lem:affine_inessential}.  Applying Lemma 2.3 of \cite{pecastaing_smooth} to couples $(X,Y_{\alpha})$ where $Y_{\alpha}$ is any element in a weight space of $\ad(\a)$,  we obtain a basis of $\g$ whose elements preserve the metric $g/g(X,X)$.  So $G$ is inessential.

Therefore,  when $G$ is essential,  $\s'$ centralizes $\n$.  If $N$ was non-abelian,  then any $X \in \n \setminus \zn$ would satisfy $g(X,X) > 0$ and because $\s'$ centralizes $X$,  it would be inessential,  implying that $G$ is inessential by \cite{pecastaing_smooth} (Proof of Corollary 1.2).

\section{Essential actions}
\label{s:essential}

From now on,  we will consider compact Lorentzian manifolds admitting essential actions of Lie groups.  We still note $(M,g)$ a compact Lorentzian manifold of dimension $n \geq 3$ and we assume that a solvable Lie group $R$ acts conformally and essentially on $M$.  The aim of this section is to provide certain sufficient conditions for the existence of elements or one-parameter subgroups of $R$ admitting a singularity with a non-linear unipotent holonomy (mainly,  the element or the corresponding flow will have trivial $1$-jet at this point).  We will then conclude that there exists a non-empty conformally flat open subset via anterior results.

\subsection{Preliminaries: associated Cartan geometry modeled on $\Ein^{1,n-1}$}
\label{ss:cartanGeometryConformal}

Recall that the (projective model of the)  \textit{Lorentzian Einstein Universe} is the smooth projective quadric $\{Q=0\} \subset \R P^{n+1}$ where $Q$ is a quadratic form of signature $(2,n)$ on $\R^{n+2}$.  It inherits a natural conformal Lorentzian structure such that $\Conf(\Ein^{1,n-1}) \simeq \PO(2,n)$.  From now on,  we will denote $\PO(2,n)$ by $G$.

\subsubsection{Equivalence principle}

We introduce now a central tool in conformal geometry.  Let $P<G$ denote the stabilizer of a null-line $x_0$ in $\R^{2,n}$,  let $\p$ denote its Lie algebra.  We denote by $P^+ \simeq \R^n$ the nilradical of $P$ and by $G_0 \simeq \R_{>0} \times O(1,n-1)$ a section of $G/P^+$,  so that $P \simeq G_0 \ltimes P^+$.

The following theorem says that a general Lorentzian conformal structure $(M,[g])$ can be interpreted as a \textit{curved version} of $\Ein^{1,n-1}$.

\begin{theorem*}[Equivalence principle for conformal Lorentzian structures]
Let $(M^n,[g])$ be a conformal Lorentzian structure in dimension $n \geq 3$.   Then,  there exists a $P$-principal fibration $\pi : \hat{M} \to M$ and a $1$-form $\omega \in \Omega^1(\hat{M},\g)$ verifying
\begin{itemize}
\item $\forall \hx \in \hat{M}$,  $\omega_{\hx} : T_{\hx} \hat{M} \to \g$ is a linear isomorphism ;
\item  $\forall A \in \p$,  $\omega(A^*) \equiv A$,  where $A^*$ stands for the  fundamental field associated to $A$ ;
\item $\forall \hx \in \hat{M}$,  $\forall p \in P$, \ $(R_p)^* \omega = \Ad(p^{-1}) \omega$,  with $R_p$ standing for the right $P$-action on $\hat{M}$.
\end{itemize}
Additionally,  a technical normalization condition is required,  making this correspondence one-to-one.  We then have the following lifting property: 

Let $f \in \Diff(M)$ be a diffeomorphism.  Then,  $f$ is conformal with respect to $[g]$ if and  only if there exists a bundle automorphism $\hat{f}$ such that $\pi \circ \hat{f} = f \circ \pi$ and $(\hat{f})^* \omega = \omega$.   In this situation, $\hat{f}$ is uniquely determined,  and called the lift of $f$.

Let $X$ be a vector field on $M$.  Then,  $X$ is conformal if and only if there exists a vector field $\hat{X}$ defined on $\hat{M}$ such that $(R_p)^* \hat{X} = \hat{X}$ for all $p \in P$, $\pi_* \hat{X} = X$,  and $\mathcal{L}_{\hat{X}} \omega = 0$.  In this situation,  $\hat{X}$ is uniquely determined,  and called the lift of $X$.
\end{theorem*}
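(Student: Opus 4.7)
\medskip

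\textbf{Proof plan.} The plan is to build $(\hat{M},\omega)$ by the standard Tanaka-style prolongation procedure adapted to $|1|$-graded parabolic geometries, and then read off the lifting properties from functoriality. The key observation is that $G/P$ is a first-order homogeneous model whose $-1$-part of the graded Lie algebra $\g = \g_{-1} \oplus \g_0 \oplus \g_{+1}$ identifies $\g_{-1}$ with $\R^{1,n-1}$, with $G_0 \simeq \CO(1,n-1)$ acting by the standard conformal representation. Hence a conformal Lorentzian structure on $M$ is exactly a $G_0$-reduction $\hat{M}^{(1)}$ of the linear frame bundle of $M$.

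First I would construct the total space $\hat{M}$ as a second-order prolongation of $\hat{M}^{(1)}$: fibers of $\hat{M} \to \hat{M}^{(1)}$ parametrize the choices of Weyl structures at a point, equivalently of torsion-free linear connections on $\hat{M}^{(1)}$ compatible with the $G_0$-structure at that point; this gives an affine bundle modeled on $\g_{+1} = T_xM^*$, so $\hat{M}$ is $P$-principal with $P = G_0 \ltimes P^+$. On $\hat{M}$ one defines a tautological soldering-form component $\theta \in \Omega^1(\hat{M},\g_{-1})$ and a partial connection form $\gamma_0 \in \Omega^1(\hat{M},\g_0)$ coming from the $G_0$-reduction, and a $\g_{+1}$-valued component $\gamma_{+1}$ encoding the second-order data. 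The sum $\omega := \theta + \gamma_0 + \gamma_{+1}$ is $P$-equivariant and reproduces fundamental fields by construction; the issue is that $\gamma_{+1}$ is not yet canonical.

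Next I would fix the normalization: impose that the curvature $K = \d \omega + \tfrac{1}{2}[\omega,\omega]$ satisfies $\partial^* K = 0$, where $\partial^*$ is the Kostant codifferential on $\g_{+1}\otimes \Lambda^2 \g_{-1}^*\otimes \g$. The Hodge decomposition of Spencer/Lie algebra cohomology for the $|1|$-grading of $\so(2,n)$ (Kostant's theorem) shows that this single condition uniquely pins down $\gamma_{+1}$ at each point, hence gives existence and uniqueness of $\omega$. \emph{This Spencer cohomology computation, together with verifying that the normalization condition determines $\gamma_{+1}$ uniquely, is the main obstacle}; everything else is bookkeeping.

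Finally, for the lifting properties: any $f \in \Diff(M)$ preserving $[g]$ lifts canonically to $\hat{M}^{(1)}$ by pushing frames forward, and by uniqueness of the normalized prolongation this lift extends uniquely to an automorphism $\hat{f}$ of the $P$-bundle $\hat{M}$ with $\hat{f}^*\omega = \omega$; conversely any such $\hat{f}$ descends to a conformal map because $\omega$ encodes the conformal structure via $\theta$ modulo $\p$. The infinitesimal version follows by taking one-parameter groups: a vector field $X$ on $M$ is conformal iff its local flow is, iff it lifts to a unique $P$-invariant vector field $\hat{X}$ on $\hat{M}$ with $\mathcal{L}_{\hat{X}}\omega = 0$. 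Uniqueness of $\hat{f}$ and $\hat{X}$ in all cases comes from effectiveness of the model $(\g,P)$ and the isomorphism $\omega_{\hx}: T_{\hx}\hat{M} \to \g$.
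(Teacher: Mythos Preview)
Your outline is a correct sketch of the standard Tanaka/\v{C}ap--Slov\'ak prolongation construction for $|1|$-graded parabolic geometries, and it is indeed how this equivalence principle is established in the literature. However, you should be aware that the paper does \emph{not} give its own proof of this theorem: it is stated in Section~\ref{ss:cartanGeometryConformal} as a classical background result (the paper cites \cite{sharpe} nearby for Cartan geometries in general), so there is nothing to compare your argument against. The theorem is being recalled, not proved.

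That said, if you intend to supply a proof where the paper gives none, your plan is sound. The one place I would urge you to be more careful is the sentence ``this single condition uniquely pins down $\gamma_{+1}$ at each point'': what Kostant's computation actually gives you is that the relevant Spencer cohomology $H^{1,1}(\g_{-1},\g)$ vanishes (so the normalization condition determines the $\g_{+1}$-component of $\omega$ uniquely once the lower components and the torsion-freeness are fixed). You should state explicitly which cohomology group vanishes and why that implies uniqueness, since that is precisely the ``main obstacle'' you flag. Also, for the lifting of vector fields, the argument via local flows only produces a local lift a priori; uniqueness (from the absolute parallelism $\omega$) is what lets you glue these to a global $\hat{X}$, and you should say so.
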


\begin{remark}
When there is no possible confusion,  we will however use the same notation for the lift of $X$.  For instance,  	we will frequently denote by $\omega_{\hx}(X)$ the evaluation at a point $\hx \in \hat{M}$ of the Cartan connection on the lift of a conformal vector field $X$.
\end{remark}

The curvature of the associated Cartan geometry is the \textit{horizontal} $2$-form $\Omega \in \Omega^2(\hat{M},\g)$ defined by $\Omega = \d \omega + \frac{1}{2}[\omega,\omega]$.  It is classically known (\cite{sharpe} for instance) that the curvature identically vanishes if and only if $(M,[g])$ is locally conformally equivalent to $\Ein^{1,n-1}$.

\begin{lemma}[\cite{bader_frances_melnick} Lem. 2.1]
\label{lem:iota_crochet}
Let $X,Y$ be two conformal vector fields.  Then,  for all $\hx \in \hat{M}$,  we have
\begin{align*}
\omega_{\hx}([\hat{X},\hat{Y}]) = - [\omega_{\hx}(\hat{X}),\omega_{\hx}(\hat{Y})] + \Omega_{\hx}(\hat{X},\hat{Y}).  
\end{align*}
In particular,   if $\hat{X}_{\hx}$ or $\hat{Y}_{\hx}$ is vertical,  then $\omega_{\hx}([\hat{X},\hat{Y}]) = - [\omega_{\hx}(\hat{X}),\omega_{\hx}(\hat{Y})]$.
\end{lemma}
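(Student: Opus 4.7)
The plan is to derive the identity directly from Cartan's structural equation $\Omega = \d\omega + \tfrac{1}{2}[\omega,\omega]$, combined with the defining property of the lift of a conformal vector field, namely $\mathcal{L}_{\hat{X}}\omega = 0$. Apply the structural equation to the pair $(\hat{X},\hat{Y})$ to obtain
\begin{equation*}
\Omega_{\hx}(\hat{X},\hat{Y}) = \d\omega_{\hx}(\hat{X},\hat{Y}) + [\omega_{\hx}(\hat{X}),\omega_{\hx}(\hat{Y})].
\end{equation*}
The task is then to compute $\d\omega(\hat{X},\hat{Y})$.

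I would expand $\d\omega$ using the intrinsic (Palais) formula
\begin{equation*}
\d\omega(\hat{X},\hat{Y}) = \hat{X}.\omega(\hat{Y}) - \hat{Y}.\omega(\hat{X}) - \omega([\hat{X},\hat{Y}]).
\end{equation*}
Now the characterizing property $\mathcal{L}_{\hat{X}}\omega = 0$ of the lift of a conformal vector field unpacks, when evaluated on $\hat{Y}$, to $\hat{X}.\omega(\hat{Y}) = \omega([\hat{X},\hat{Y}])$, and similarly $\hat{Y}.\omega(\hat{X}) = -\omega([\hat{X},\hat{Y}])$. Substituting gives $\d\omega(\hat{X},\hat{Y}) = \omega([\hat{X},\hat{Y}])$, and plugging this back into the structural equation yields
\begin{equation*}
\omega_{\hx}([\hat{X},\hat{Y}]) = -[\omega_{\hx}(\hat{X}),\omega_{\hx}(\hat{Y})] + \Omega_{\hx}(\hat{X},\hat{Y}),
\end{equation*}
as required.

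For the second assertion, I would invoke the horizontality of the curvature form of a normalized Cartan connection: $\Omega$ vanishes whenever one of its arguments is tangent to a fibre, \textit{i.e.} equal to $A^*_{\hx}$ for some $A \in \p$. Hence if $\hat{X}_{\hx}$ or $\hat{Y}_{\hx}$ is vertical, the curvature term disappears and we obtain $\omega_{\hx}([\hat{X},\hat{Y}]) = -[\omega_{\hx}(\hat{X}),\omega_{\hx}(\hat{Y})]$.

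There is no genuine obstacle here: the only subtlety is to make sure that $\mathcal{L}_{\hat{X}}\omega = 0$ really does give the pointwise relation $\hat{X}.\omega(\hat{Y}) = \omega([\hat{X},\hat{Y}])$, which is the standard unwinding of the Lie derivative of a $\g$-valued $1$-form acting on a vector field. The rest is a mechanical substitution, and the horizontality of $\Omega$ for a (normalized) Cartan connection is part of the standard dictionary recalled in Section \ref{ss:cartanGeometryConformal}.
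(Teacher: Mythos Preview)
Your proof is correct and is the standard derivation: the identity follows directly from the structural equation $\Omega = \d\omega + \tfrac{1}{2}[\omega,\omega]$ together with $\mathcal{L}_{\hat{X}}\omega = 0$ and $\mathcal{L}_{\hat{Y}}\omega = 0$, exactly as you wrote. Note that the paper does not give its own proof of this lemma; it simply quotes it from \cite{bader_frances_melnick}, so there is nothing to compare against beyond checking that your argument is valid --- which it is.
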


Note that we recover here that a Lie algebra of conformal vector fields of a conformally flat Lorentzian manifold can be embedded into $\g=\so(2,n)$.  This identity even implies that if the Cartan curvature vanishes at a single point,  then we obtain an embedding.

\subsubsection{Restricted root-space decomposition of $\so(2,n)$}
\label{sss:roots}

Recall that the restricted root-system of $\g = \so(2,n)$ is $B_2$,  \textit{i.e.} isomorphic to $\{\pm \alpha, \pm \beta,  \pm \alpha \pm \beta\}$ with $\alpha \perp \beta$ and $|\alpha|=|\beta|$.   We introduce linear coordinates on $\R^{2,n}$ which we fix once and for all.  Let $(e_0,\ldots,e_{n+1})$ be a basis of $\R^{2,n}$ in which the quadratic form $Q$ reads $ 2u_0u_{n+1}+2u_1u_n+u_2^2+\cdots+u_{n-1}^2$ and such that $P < G$ is the stabilizer of $x_0:=[e_0]$.  
We denote by $\a$ the $\R$-split Cartan subalgebra  
\begin{align*}
\a = 
\left \{
A=
\begin{pmatrix}
\lambda & & & & \\
 & \mu & & & \\
 & & \mathbf{0} & & \\
 & & & -\mu & \\
 & & & & -\lambda
\end{pmatrix}
, \ \lambda,\mu \in \R
\right \}.
\end{align*}
We denote by $\alpha$ and $\beta$ the restricted-roots $\alpha(A)=\lambda$ and $\beta(A)=\mu$.  The compact part of the centralizer of $\a$ is identified as
\begin{align*}
\m =
\left \{
\begin{pmatrix}
0 & & & & \\
 & 0 & & & \\
 & & X & & \\
 & & & 0 & \\
 & & & & 0
\end{pmatrix}
, \ X \in \so(n-2)
\right \},
\end{align*}
and the restricted root-spaces of $\a$ are located as indicated below
\begin{align*}
\begin{pmatrix}
\a & \g_{\alpha - \beta} & \g_{\alpha} & \g_{\alpha+\beta} & 0 \\
\g_{\beta - \alpha}   & \a & \g_{\beta} & 0 & \g_{\alpha+\beta} \\
  \g_{-\alpha} & \g_{- \beta} & \m & \g_{\beta} & \g_{\alpha }  \\
 \g_{- \alpha - \beta}  & 0 & \g_{- \beta} & \a & \g_{\alpha - \beta} \\
  0  & \g_{- \alpha - \beta} & \g_{- \alpha} & \g_{\beta - \alpha} & \a
\end{pmatrix}.
\end{align*}
The Lie algebra of $P$ is $\p = \a \oplus \m \oplus \g_{-\beta}\oplus \g_{\beta} \oplus \g_{\alpha-\beta} \oplus \g_{\alpha} \oplus \g_{\alpha+\beta}$ and the nilradical $P^+$ of $P$ has Lie algebra $\p^+ = \g_{\alpha-\beta} \oplus \g_{\alpha} \oplus \g_{\alpha+\beta}$.

The element $A_{\alpha} \in \a$ such that $\alpha(A_{\alpha}) = 1$ and $\beta(A_{\alpha}) = 0$ defines a grading $\g = \g_{-1} \oplus \g_0 \oplus \g_1$, where $\g_k = \{X \in \g  \ | \ [A_{\alpha},X]=kX\}$,  for $k =-1,0,1$.   We have $\p = \g_0 \ltimes \g_1$,  $\g_1=\p^+$,  $\g_{-1} = \g_{-\alpha+\beta} \oplus \g_{-\alpha} \oplus \g_{-\alpha-\beta}$ and $\g_0 \simeq \R \oplus \so(1,n-1)$,  where the $\R$-factor is $\R.A_{\alpha}$.

\subsubsection{Stereographic projections and holonomy of conformal maps and vector fields with a singularity}

\begin{definition}
Let $X$ be a conformal vector field of $(M,[g])$,  let $\hat{X}$ be its lift to the Cartan bundle.  Assume that $X$ has a singularity $x \in M$.  For any $\hx \in \pi^{-1}(x)$,  the \textit{holonomy} of $X$ at $\hx$ is defined as the element $\omega_{\hx}(X) \in \p$.

If $f \in \Conf(M,[g])$ fixes a point $x \in M$,  and if $\hx \in \pi^{-1}(x)$,  then the holonomy of $f$ at $\hx$,  denoted $\hol^{\hx}(f)$,  is the unique element $p \in P$ such that $\hat{f}(\hx) = \hx.p^{-1}$.  
\end{definition}

The correspondence $f \in \Conf(M,[g])_x \mapsto \hol^{\hx}(f) \in P$ is then an injective Lie group homomorphism. Of course,  when $X(x) = 0$,  the holonomy of $\phi_X^t$ at $\hx$ is $e^{tX_h}$.

Since $\omega$ defines a global framing on $\hat{M}$,  $X$ is completely determined by the evaluation of $\hat{X}$ at any point $\hx \in \hat{M}$.  Thus,  when it admits a singularity,  $X$ is determined by the  data of its holonomy.  The ideal situation is when a neighborhood of $x$ is conformally flat: $X$ is then locally conjugate to the conformal vector field $X_h$ of $G/P$ near its singularity $eP$.  However,  relating directly the dynamics of $X$ to its holonomy is a difficult task in general.  

The Cartan connection induces a linear identification $\varphi_{\hx} : T_xM \to \g/\p$,  such that for any $f \in \Conf(M,[g])$ with a fixed point $x$ and if $p  = \hol^{\hx}(f)$ for $\hx \in \pi^{-1}(x)$, we have
\begin{align*}
\varphi_{\hx} \circ \d_x f = \bar{\Ad}(p) \circ \varphi_{\hx}
\end{align*}
where $\bar{\Ad}$ is the representation of $P$ on $\g/\p$ induced by the adjoint representation.  So,  $\d_x f$ is conjugate to $\Ad(g_0)|_{\g{-1}}$,  where $g_0$ is the $G_0$ component of $\hol^{\hx}(f)$.  In particular,  $\d_x f = \id$ if and only if $\hol^{\hx}(f) \in P^+$,  and similarly for conformal vector fields.

We describe the dynamics of the holonomy of $\hol^{\hx}(f)$ or $X^h$ via \textit{stereographic projections}.   Let $(e_0,\ldots,e_{n+1})$ be a basis of $\R^{2,n}$ in which the quadratic form $Q$ reads $ 2u_0u_{n+1}+2u_1u_n+u_2^2+\cdots+u_{n-1}^2$ and such that $P < G$ is the stabilizer of $x_0:=[e_0]$.  We denote by  $s_{x_0} : \R^{1,n-1} \to M_{x_0}$ the stereographic projection given by
\begin{align*}
s_{x_0}(v_1,\ldots,v_n) = \left [ -\frac{\langle v,v \rangle}{2} : v_1 : \cdots : v_n : 1 \right ],
\end{align*}
where $\langle v,v \rangle = 2v_1v_n + v_2^2 + \cdots + v_{n-1}^2$. It is a conformal diffeomorphism between $\R^{1,n-1}$ and an open-dense subset subset of $\Ein^{1,n-1}$ called the Minkowski patch $M_{x_0}$ associated to $x_0$,  whose complement is the light-cone based at that point (see for instance \cite{frances_melnick_normal_forms} §2.1.2 or \cite{pecastaing_conformal_projective} §2.1).  The parabolic subgroup $P \simeq \CO(1,n-1) \ltimes \R^n$ preserves $M_{x_0}$ and $s_{x_0}$ conjugates its action to the affine action of $\CO(1,n-1) \ltimes \R^n$ on $\R^{1,n-1}$.  

By Liouville Theorem,  every conformal diffeomorphism from $\R^{1,n-1}$ to an open subset of $\Ein^{1,n-1}$ is of the form $g.s_{x_0}$,  for some $g \in G$.  Such maps are called stereographic projections,  and the pole of projection of $g.s_{x_0}$ is $g.x_0$.  For $x \in \Ein^{1,n-1}$,  if $G_x$ denotes the stabilizer of $x$,  then the nilradical $\g_x^+$ of $\g_x$ is characterized as the subspace of conformal vector fields $X$ of $\Ein^{1,n-1}$ fixing $x$ and such that $s^*X$ is translation of $\R^{1,n-1}$ for some (equivalently any) stereographic projection $s$ with pole $x$. 

\begin{definition}
For any $x \in \Ein^{1,n-1}$,  an element $X \in \g$ is called a \textit{translation} of the Minkowski patch $M_x$ if $X \in \g_x^+$,  or equivalently $s^*X$ is a translation of $\R^{1,n-1}$ for any stereographic projection $s$ with pole $x$.  It is said to be a space-like/time-like/light-like translation if $s^*X$ is a translation of $\R^{1,n-1}$ with the same property.
\end{definition}

\subsubsection{Local linearization,  non-linear unipotent point-wise holonomy}

We identify $P$ as the subgroup $\Ad(G_0)|_{\p^+} \ltimes \p^+$ of the affine group of $\p^+$,  and its Lie algebra correspondingly.  A one-parameter subgroup $\{e^{tX}\} \subset P$ is conjugate to a one-parameter subgroup of $G_0$ if and only if its affine action on $\p^+$ has a fixed point.

\begin{proposition}[\cite{frances-localdynamics},  Prop. 4.2]
\label{prop:linearization}
Let $f \in \Conf(M,[g])$ fixing $x$ and let $\hx \in \pi^{-1}(x)$.  Then,  $f$ is locally linearizable near $x$ if and only if $\hol^{\hx}(f)$ is conjugate to an element of $G_0$.  
\end{proposition}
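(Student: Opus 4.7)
The plan is to prove both implications by relating the local dynamics of $f$ near $x$ to the affine action of $P$ on $\p^+$ induced by $\bar{\Ad}$. As a preliminary observation, under the splitting $P = G_0 \ltimes P^+$, the $\bar{\Ad}$-action of an element $p = g_0 u$ on $\p^+$ is affine with linear part $\Ad(g_0)|_{\p^+}$ and translation part $\log u \in \p^+$, so $p$ is $P$-conjugate to an element of $G_0$ if and only if this affine map has a fixed point, equivalently $\log u \in (\id - \Ad(g_0))(\p^+)$. Moreover, changing the base point $\hx \rightsquigarrow \hx \cdot q$ conjugates $\hol^{\hx}(f)$ by $q \in P$, so being $P$-conjugate to $G_0$ is intrinsic to the pair $(f,x)$.

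For direction $(\Leftarrow)$, after replacing $\hx$ by a suitable translate, I may assume $p := \hol^{\hx}(f) \in G_0$. I would then introduce the parabolic exponential chart
\begin{equation*}
\phi_{\hx} : V \in U \subset \g_{-1} \ \longmapsto \ \pi\bigl(\varphi^1_{\omega^{-1}(V)}(\hx)\bigr) \in M,
\end{equation*}
where $U$ is a small neighborhood of $0$ and $\omega^{-1}(V)$ is the $\omega$-constant vector field on $\hat{M}$ whose value under $\omega$ is $V$. The $P$-equivariance $R_q^* \omega = \Ad(q^{-1})\omega$ yields $(R_q)_* \omega^{-1}(V) = \omega^{-1}(\Ad(q^{-1})V)$, while $\hat{f}^* \omega = \omega$ implies that the flow of $\omega^{-1}(V)$ commutes with $\hat{f}$. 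Combined with $\hat{f}(\hx) = \hx \cdot p^{-1}$ and the inclusion $\Ad(p^{-1})\g_{-1} = \g_{-1}$ (using $p \in G_0$), a short computation gives $f \circ \phi_{\hx} = \phi_{\hx} \circ \Ad(p^{-1})|_{\g_{-1}}$, which linearizes $f$ in that chart.

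For direction $(\Rightarrow)$, I would argue by contraposition. Suppose $p = g_0 u$ is not $P$-conjugate to any element of $G_0$. Decomposing $\log u = Y_+ + Z_+$ with $Y_+ \in (\id - \Ad(g_0))(\p^+)$ and a non-zero $Z_+ \in \ker(\id - \Ad(g_0)|_{\p^+})$, a conjugation absorbs $Y_+$ and reduces me to $p = g_0 \exp(Z_+)$ with $\Ad(g_0) Z_+ = Z_+$ and $Z_+ \neq 0$. Then $p^n = g_0^n \exp(n Z_+)$ grows linearly in $n$ through the $P^+$-factor. The strategy is to show that this linear drift in the $P^+$-component of $\hol^{\hx}(f^n)$, transported back to $T_xM$ and higher-order jet spaces via $\omega$, produces a polynomial-in-$n$ divergence of the higher jets of $f^n$ at $x$ in a direction complementary to the dynamics of $\Ad(g_0)^n$. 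This contradicts the existence of a linearization $\psi$, since $f^n = \psi^{-1} \circ (d_x f)^n \circ \psi$ would force all jets of $f^n$ at $x$ to be uniformly controlled in $n$ by $\psi$ and $(d_x f)^n = \Ad(g_0)^n|_{\g_{-1}}$.

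The main obstacle is the quantitative step in the converse direction: translating a holonomy-level obstruction into an obstruction to $C^\infty$-linearization in the presence of Cartan curvature. A natural route is to verify the contrapositive first in the flat model $\Ein^{1,n-1}$, where $f$ is conjugate through a stereographic projection to the explicit affine action of $p$ on $\R^{1,n-1}$ and non-linearizability for $Z_+ \neq 0$ is immediate, and then bootstrap to the curved case by showing that the curvature perturbs $f$ only at higher order and cannot absorb the linear-in-$n$ growth coming from the unipotent component of $p^n$.
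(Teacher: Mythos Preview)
The paper does not supply its own proof of this proposition; it is quoted verbatim from \cite{frances-localdynamics}, Prop.~4.2, as an external input. So there is nothing in the present paper to compare against, and your proposal has to be judged on its own.

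Your direction $(\Leftarrow)$ is correct and is the standard argument: once $p=\hol^{\hx}(f)\in G_0$, the equivariance identities you wrote give $f\circ\phi_{\hx}=\phi_{\hx}\circ\Ad(p^{-1})|_{\g_{-1}}$, which is exactly the linearization. This is the computation the paper itself uses implicitly just after Lemma~\ref{lem:holonomy_gbeta}.

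Your direction $(\Rightarrow)$ has a minor error and a genuine gap. The minor error: the splitting $\log u=Y_++Z_+$ with $Y_+\in\Ima(\id-\Ad(g_0))$ and $Z_+\in\Ker(\id-\Ad(g_0))$ need not exist, because when $\Ad(g_0)|_{\p^+}$ has a Jordan block with eigenvalue $1$ the image and kernel of $\id-\Ad(g_0)$ intersect and do not span $\p^+$. In that situation one can still conjugate $\log u$ into a complement of the image, but the residue is not $\Ad(g_0)$-fixed and $p^n=g_0^n\exp\bigl(\sum_{k=0}^{n-1}\Ad(g_0^{-k})\log u\bigr)$ has $P^+$-part growing polynomially of degree $>1$, not linearly as you claim. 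This is repairable.

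The genuine gap is the one you flag yourself: turning unbounded growth of the $P^+$-component of $\hol^{\hx}(f^n)$ into an obstruction to smooth linearizability of $f$. The holonomy controls only the $2$-jet of $f$ (curvature enters at order $\geq 3$ in $\phi_{\hx}$), so your ``higher jets diverge'' heuristic is not grounded; and your proposed bootstrap from the flat model is a plan, not an argument. A cleaner line is to bypass iteration: the same exponential-chart computation, done for arbitrary $p\in P$, shows that the $2$-jet of $f$ at $x$ in $\phi_{\hx}$ equals the $2$-jet at $o=eP$ of the model transformation $p$ acting on $\Ein^{1,n-1}$, independently of curvature. Hence smooth linearizability of $f$ forces $2$-jet linearizability of the model map $p$. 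One is then reduced to a purely algebraic claim in the flat model: the $2$-jet of $p\in P$ at $o$ is conjugate to a linear $2$-jet by \emph{some} diffeomorphism germ if and only if $p$ is $P$-conjugate into $G_0$. That equivalence still needs to be checked (the ``only if'' is not automatic, since arbitrary $2$-jets of charts form a larger group than $P$), but at least it isolates the missing step as a finite-dimensional computation rather than an asymptotic estimate.
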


\begin{remark}
\label{rem:holonomy_linear}
We have the analogue for the linearization of a conformal vector field $X$.  Note that writing $X^h = (X_0, X_1) \in \g_0 \ltimes \p^+$,  $X^h$ is conjugate to an element of $\g_0$ if there exists $Y_1 \in \p^+$ such that $X_1 = [X_0,Y_1]$,  and in this case $\Ad(e^{Y_1}) X^h = X^h - [X_0,Y_1] = X_0$.
\end{remark}

We will use the following results for essential singularities of conformal vector fields:

\begin{proposition}[\cite{frances_melnick_normal_forms}]
\label{prop:holonomy_uniptotent}
Let $X$ be a conformal vector field of a Lorentzian manifold admitting a singularity $x_0$.  Let $X_h \in \p$ be its holonomy at some given point in the fiber of $x_0$.  If $e^{tX_h}$ is a unipotent one-parameter subgroup of $P$,  and is not conjugate to a one-parameter subgroup of $G_0$,  then an open subset of $M$ containing $x_0$ in its closure is conformally flat.
\end{proposition}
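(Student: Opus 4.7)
The plan is to produce a non-empty open set $U \subset M$ with $x_0 \in \bar U$ on which the Cartan curvature $\Omega$ vanishes identically; conformal flatness of $U$ then follows since vanishing of $\Omega$ characterises local conformal equivalence with $\Ein^{1,n-1}$.

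First, I unpack the hypothesis. Writing $X_h = X_0 + X_1$ with $X_0 \in \g_0$ and $X_1 \in \p^+$, unipotence of $\{e^{tX_h}\}$ forces $\ad(X_h)$ nilpotent on $\g$ (in particular $X_0$ is nilpotent in $\g_0$), while the non-conjugacy assumption translates, by Remark \ref{rem:holonomy_linear}, into $X_1 \notin [X_0,\p^+]$. This residual \emph{translation part} is the engine of the argument. Lift the picture to the Cartan bundle: fix $\hx_0 \in \pi^{-1}(x_0)$, so that $\hat X_{\hx_0}$ is the fundamental vector field of $X_h$ and the orbit $\hat{\phi}_X^t(\hx_0) = \hx_0 \cdot e^{tX_h}$ stays in the fiber, while the linearised dynamics in the $\omega$-trivialisation $T_{\hx_0}\hat M \simeq \g$ are governed by $e^{-t\ad(X_h)}$. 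Now for any $Y \in \g_{-1}$, let $\hy_s$ denote the image of $\hx_0$ under the time-$s$ flow of the $\omega$-constant vector field with value $Y$; this projects to a curve $y_s \to x_0$ in $M$.

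The core step is a Frances-type renormalisation, following \cite{frances-localdynamics, frances_melnick_normal_forms}. For $s$ small one selects $t(s) \to +\infty$ and $p(s) \in P$ with $p(s) \to \infty$ such that $\hat z_s := \hat{\phi}_X^{t(s)}(\hy_s) \cdot p(s)$ converges in $\hat M$ to some $\hat z_*$; the nilpotence of $\ad(X_h)$ makes $\hat{\phi}_X^t(\hy_s)$ polynomially drift away from $\hx_0$, and the obstruction $X_1 \notin [X_0,\p^+]$ precisely ensures that this drift has a $\p^+$-component which cannot be neutralised by any choice of $p(s)$ keeping $y_s$ in a compact region. Combining the invariance $(\hat{\phi}_X^{t})^*\Omega = \Omega$ with $P$-equivariance $R_p^*\Omega = \Ad(p^{-1})\Omega$ yields the transport identity $\Omega_{\hy_s} = \Ad(p(s))\,\Omega_{\hat z_s}$. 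As $s \to 0$ the left-hand side remains bounded while $\Ad(p(s))$ expands certain $\a$-weight components of $\g$ unboundedly, forcing those components of $\Omega_{\hat z_*}$ to vanish. Varying $Y \in \g_{-1}$ and iterating the argument sweeps an open neighbourhood $\mathcal V$ of $\hat z_*$ on which every component of $\Omega$ is killed, and $U := \pi(\mathcal V)$ is the claimed open set.

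The main obstacle is this final matching step: one must verify that the divergence profile of $p(s)$ along the restricted-root directions of $\a$ (cf.\ \S\ref{sss:roots}) is sufficiently coordinated with the $\Ad(\a)$-weight decomposition of $\Omega$ to annihilate \emph{every} curvature block. This forces a detailed case analysis organised by the conjugacy class in $P$ of the non-$G_0$-linearizable unipotent element $X_h$, which is precisely the content of the local normal-form study carried out in \cite{frances_melnick_normal_forms}.
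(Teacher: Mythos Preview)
Your proposal and the paper take the same route: both ultimately defer to \cite{frances_melnick_normal_forms}, \S5.3. The paper's ``proof'' is in fact nothing more than a citation together with a clarifying remark; your outline of the renormalisation/holonomy-sequence mechanism is a fair summary of the strategy in that reference, and you correctly recognise that the substantive work is the case analysis of non-linearizable unipotent conjugacy classes in $P$, which you do not attempt to reproduce.

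The one genuine point in the paper's argument that you miss is the regularity issue. Theorem~1.2 of \cite{frances_melnick_normal_forms} is stated only for real-analytic Lorentzian metrics, so a direct citation does not suffice. The paper's observation is that analyticity enters the cited proof in exactly two places: (i) to propagate conformal flatness from a non-empty open subset to all of $M$, and (ii) to reduce an arbitrary essential singularity to one whose holonomy is already unipotent. Neither step is needed for the present proposition --- unipotence is in the hypothesis, and the conclusion only asks for an open subset --- so the content of \S5.3 of \cite{frances_melnick_normal_forms} applies verbatim in the smooth category. Your sketch would match the paper if you inserted this remark, which is really the only thing being asserted here.

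As a minor comment, several steps in your middle paragraph (the selection of $t(s)$, $p(s)$, the claim that varying $Y$ ``sweeps an open neighbourhood'') are stated rather loosely; they are not wrong in spirit, but they are not arguments either, and the honest statement is simply that these are carried out in the cited reference.
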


\begin{proof}
This is proved in \cite{frances_melnick_normal_forms},  §5.3.   Although the authors give a statement (Theorem 1.2) valid only in analytic regularity,  they use this analyticity assumption to deduce conformal flatness of the whole manifold from that of an open subset,  and importantly to reduce their problem to the case of a vector field with unipotent holonomy.  The rest of their proof is then valid for smooth metrics,  yielding a proof of this proposition.
\end{proof}

A special situation where this proposition applies is when the $\g_0$-part of $X_h$ is trivial,  \textit{i.e.} if the flow of $X$ has trivial $1$-jet at $x$.  This case is in fact covered by Theorem 4.1 of \cite{frances_melnick_normal_forms} which treats every metric signature.  Moreover,  the proof of this result can be adapted to the following setting (a stronger result is announced by the authors at the moment,  see for instance Theorem 9.1 of \cite{frances_melnick_lichne-anal3d}  for the $3$-dimensional case). 

\begin{proposition}
\label{prop:discrete_holonomy}
Let $(M,g)$ be a Lorentzian manifold and $f \in \Conf(M,[g])$ be such that $f(x)=x$ and $\hol^{\hx}(f) \in \exp(\p^+)$ for some point $x$ and some (equivalently all) $\hx$ in the fiber over $x$.  Then,  an open subset containing $x$ in its closure is conformally flat.
\end{proposition}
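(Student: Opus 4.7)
The strategy is to adapt the argument of Theorem 4.1 of \cite{frances_melnick_normal_forms}, which treats conformal vector fields whose flow has unipotent holonomy in $P^+$, to the discrete dynamical system generated by $f$. Pick $\hx \in \pi^{-1}(x)$ and write $\hol^{\hx}(f) = \exp(-Y)$ with $Y \in \p^+ = \g_1$. Since $\g_1$ is abelian and $\ad(Y)$ is nilpotent of degree at most three in the grading $\g = \g_{-1} \oplus \g_0 \oplus \g_1$, iterates of the lift satisfy $\hat{f}^{k}(\hx) = \hx \cdot \exp(-kY)$ for all $k \in \Z$. In the flat model, $\exp(-Y)$ acts on the Minkowski patch at $\pi(\hx) \in \Ein^{1,n-1}$ as a translation, and I use this picture as a blueprint for the curved local dynamics.

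The first step is to produce approximate recurrences. For $v \in \g_{-1}$ and small $t$, let $\hat{y}_v(t)$ be the time-$t$ value at $\hx$ of the flow of the horizontal vector field $\omega^{-1}(v)$ on $\hat{M}$. In $G/P$ the analogous computation yields
\begin{align*}
\exp(-kY)\exp(tv) = \exp(tv_k)\exp(-kY), \qquad v_k := \Ad(\exp(-kY))v = v - k[Y,v] + \tfrac{k^2}{2}[Y,[Y,v]],
\end{align*}
so $v_k$ depends polynomially on $k$. Matching $t_n \to 0$ with $k_n \to \infty$ (typically $t_n$ of order an inverse power of $k_n$), one arranges that $\hat{y}_n := \hat{y}_v(t_n)$ converges to $\hx$ while $\hat{f}^{k_n}(\hat{y}_n)$ converges to $\hx \cdot q$ for some $q \in P$ determined by $Y$ and $v$. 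The deviation between the flat-model expression and the actual position in $\hat{M}$ is of order $t^2$ and is encoded by the curvature $\Omega$ of $\omega$, which is precisely the quantity one needs to control.

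The second step is to exploit the invariance $(\hat{f}^{k_n})^* \Omega = \Omega$. Evaluating at $\hat{y}_n$ and passing to the limit, one obtains pointwise identities relating the values and higher jets of $\Omega$ at $\hx$, at $\hx \cdot q$, and along the family of curves $t \mapsto \hat{y}_v(t)$. Letting $v$ vary through an open cone of $\g_{-1}$ and combining with the $P$-equivariance of $\Omega$, these identities can be iterated to force $\Omega$ to vanish on a nonempty open $P$-invariant subset $U \subset \hat{M}$ whose closure meets $\pi^{-1}(x)$. Its projection $\pi(U)$ is then a conformally flat open subset of $M$ having $x$ in its closure.

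The main obstacle, compared with the continuous setting of \cite{frances_melnick_normal_forms}, is the restriction to integer iterates of a single conformal diffeomorphism: one no longer has a one-parameter family to differentiate in, and all recurrences must be built from the discrete orbit of $f$. The delicate point is guaranteeing that the directions $v \in \g_{-1}$ effectively explored by the sequences $(t_n,k_n)$ sweep an open cone, and not merely the line along which $\Ad(\exp(-Y))$ acts trivially; otherwise the resulting vanishings of $\Omega$ would concentrate on a thin set insufficient to deduce openness of the conformally flat region. This is also where the smoothness (rather than analyticity) of $g$ must be compensated by a quantitative jet analysis of $\Omega$ along the curves $\hat{y}_v$, carried out as in the flow case of Frances and Melnick.
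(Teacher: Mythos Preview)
Your plan is exactly the approach the paper takes: the paper does not give a detailed proof of this proposition at all, but simply asserts that the proof of Theorem 4.1 of \cite{frances_melnick_normal_forms} (the flow case with holonomy in $\p^+$) ``can be adapted to the following setting,'' and points to an announced stronger result (Theorem 9.1 of \cite{frances_melnick_lichne-anal3d} in dimension $3$). Your sketch is precisely such an adaptation via holonomy sequences and curvature invariance, so you are in full agreement with the paper's intended argument --- indeed you supply more detail than the paper does.
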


\subsection{$\heis$-triples in $\so(2,n)$}

We prove here an algebraic property which will be used later.  Recall that we note $P < \PO(2,n)$ the stabilizer of the isotropic line $x_0 = [1:0:\cdots:0]$ in the fixed coordinates of $\R^{2,n}$ that we chose in Section \ref{sss:roots}.

\begin{proposition}
\label{prop:heis_triple_so2n}
Let $(X,Y,Z)$ be an $\heis$-triple in $\so(2,n)$ such that $Z \in \p$. Then,  $Z$ is a light-like translation of the Minkowski patch $M_{x_0}$.
\end{proposition}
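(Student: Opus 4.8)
The plan is to realise $\so(2,n)$ on the standard representation $V=\R^{2,n}$, where it preserves the form $Q$ of Witt index $2$, and to reduce the whole statement to the single assertion that $Z$ acts as a \emph{square-zero} endomorphism, $Z^2=0$.

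First I would record that $Z$ is nilpotent. Since $Z=[X,Y]$ commutes with both $X$ and $Y$, the classical trace identity gives $\Tr((\ad Z)^k)=\Tr((\ad Z)^{k-1}[\ad X,\ad Y])=0$ for every $k\geq 1$, so $\ad Z$ is nilpotent; equivalently $Z$ is a nilpotent endomorphism of $V$, and in particular $Ze_0=0$. I then claim it suffices to prove $Z^2=0$ on $V$. Indeed, decompose $Z=Z_0+Z_1$ along the grading $\g=\g_{-1}\oplus\g_0\oplus\g_1$ induced by $A_\alpha$; there is no $\g_{-1}$-component because $Z\in\p$. The image of $Z$ in $\g_0$ is $\ad$-nilpotent, hence lies in $\so(1,n-1)$ and is nilpotent there, so $Z_0\in\so(1,n-1)$. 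Computing $Z^2$ on $V_0=\langle e_1,\dots,e_n\rangle$ and keeping the $V_0$-component (the terms landing in $\R e_0=V_1$ drop out) yields $Z_0^2|_{V_0}$; thus $Z^2=0$ forces $Z_0^2=0$, and since a nonzero nilpotent of $\so(1,n-1)$ has nonzero square, $Z_0=0$ and $Z\in\p^+$ is a translation. A direct computation in the coordinates of Section~\ref{sss:roots} then shows that for $Z\in\p^+$ the endomorphism $Z^2$ sends $e_{n+1}$ to minus the Lorentzian square-norm of $Z$ times $e_0$ and vanishes on the remaining basis vectors; hence $Z^2=0$ exactly when $Z$ is null, i.e. $Z$ is a light-like translation of $M_{x_0}$.

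To prove $Z^2=0$ I would use a symplectic reduction. The bilinear form $B(u,v):=Q(Zu,v)$ is alternating (as $Z$ is $Q$-skew) with radical exactly $\Ker Z$, so it descends to a symplectic form on $\bar V:=V/\Ker Z$, whose dimension $r=\Rk Z$ is therefore even. Because $X$ and $Y$ centralise $Z$ and preserve $Q$, they induce $\bar X,\bar Y\in\sp(\bar V)$, and $(\bar X,\bar Y,\bar Z)$ satisfies the same Heisenberg relations with $\bar Z$ the endomorphism induced by $Z$; crucially $Z^2=0$ if and only if $\bar Z=0$. Now if $r=2$ then $\sp(\bar V)\cong\sl_2(\R)$ is simple of dimension $3$, hence not nilpotent, so it contains no $3$-dimensional Heisenberg subalgebra; and a short check shows $\bar X,\bar Y,\bar Z$ are linearly independent whenever $\bar Z\neq 0$, so necessarily $\bar Z=0$, that is $Z^2=0$. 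Thus the statement follows as soon as $r\leq 2$, and since $Z\neq 0$ and $r$ is even, the only remaining possibility is $r=4$.

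The residual case $r=4$ is where I expect the main difficulty, and here the hypothesis $Z\in\p$ must be exploited fully. Writing $Z=Z_0+Z_1$ as above, $Z_0$ is a nilpotent of the real-rank-one algebra $\so(1,n-1)$, hence of rank at most $2$, while $Z_1\in\p^+$ is a translation, which an inspection of its range shows also has rank at most $2$; therefore $r\leq 4$, and $r=4$ forces both ranks to equal $2$, with $Z_0\neq 0$ and $Z^2\neq 0$. To eliminate this configuration I would pass to a Jacobson--Morozov triple $(H,Z,F)$ and grade the centraliser $\z(Z)=\bigoplus_{i\geq 0}\z(Z)_i$ by $\ad H$; the Heisenberg-centre property is equivalent to $Z\in[\z(Z),\z(Z)]$, whose weight-$2$ part is $[\z(Z)_0,\z(Z)_2]+[\z(Z)_1,\z(Z)_1]$. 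The mechanism producing the contradiction is a skew-symmetry argument: the reductive part of $\z(Z)$ preserves a form on the relevant graded piece for which $Z$ is non-isotropic, so $Z$ cannot lie in the image of the corresponding bracket. This is exactly the phenomenon one checks more simply for a non-null translation, which is never a commutator of elements of its own centraliser, because the stabiliser ($\so(1,n-2)$ or $\so(n-1)$) of a non-null vector acts on $\p^+\simeq\R^{1,n-1}$ with image inside that vector's orthogonal complement. That $\so(2,n)$ has real rank $2$ keeps this final analysis finite and tractable.
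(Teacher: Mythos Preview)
Your approach is genuinely different from the paper's and, up to the point where you invoke Jacobson--Morozov, it is correct and rather elegant.  The reduction to the single condition $Z^{2}=0$ on $V=\R^{2,n}$ is right: the argument that $Z_0$ has no $A_{\alpha}$-component, that $Z^{2}=0$ forces $Z_0^{2}|_{V_0}=0$ and hence $Z_0=0$ (since a nonzero nilpotent of $\so(1,n-1)$ has Jordan type $[3,1^{n-4}]$ on $\R^{1,n-1}$), and the identification of $Z^{2}=0$ on $\p^{+}$ with nullity of the translation, all go through.  The symplectic reduction to $\bar V=V/\Ker Z$ is also correct, and your handling of the case $r=\Rk Z=2$ via $\sp(\bar V)\cong\sl_{2}(\R)$ is clean.

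The genuine gap is the case $r=4$.  What you write there is not a proof but a programme: you do not identify which nilpotent orbit(s) of $\so(2,n)$ arise (several Jordan types with rank $4$ are compatible with $Z\in\p$), you do not compute $\z(Z)$ or its $\ad H$-grading, and the ``skew-symmetry argument'' remains a slogan.  Your closing analogy with a non-null translation is misleading: a non-null $Z\in\p^{+}$ has $\Rk Z=2$, so it is already covered by your $\sl_{2}$ argument and tells you nothing about the genuinely new difficulty at $r=4$.  Moreover, the mere existence of an $\heis(3)$ inside $\sp(4,\R)$ shows that your symplectic reduction alone cannot finish; you must import a constraint specific to $\bar Z$ (for instance its Jordan type on $\bar V$, which is determined by that of $Z$ on $V$), and this step is absent.

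For comparison, the paper's proof sidesteps all of this.  It invokes Proposition~3.3 of \cite{frances_melnick_nilpotent}, which already gives that any Heisenberg centre in $\so(2,n)$ is $G$-conjugate into $\g_{\alpha+\beta}$ (in particular $Z^{2}=0$ globally, with no case analysis on $r$), and then upgrades the conjugacy from $G$ to $P$ by a short geometric argument on $\Ein^{1,n-1}$: a light-like translation of some Minkowski patch has as fixed set a light-like geodesic, and remains a light-like translation in the patch of every point of that geodesic; since your $Z$ fixes $x_{0}$, it lies on the fixed geodesic of $\Ad(g)Z$ and the conclusion follows.  Your route has the merit of being self-contained, but to make it a proof you must actually close the $r=4$ case --- either by carrying out the centraliser computation you sketch, or by arguing directly with the Jordan type of $Z$ on $\R^{2,n}$ to show that no rank-$4$ nilpotent in $\p$ can satisfy $Z\in[\z_{\g}(Z),\z_{\g}(Z)]$.
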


Otherwise stated,  there exists $p \in P$ such that $\Ad(p)Z \in \g_{\alpha+\beta}$.

\begin{proof}
We use the general Proposition 3.3 of \cite{frances_melnick_nilpotent} which guarantees the conclusion here,  modulo conjugacy in $G$.  It is enough to show that the conjugacy can be realized by an element of $P$,  since the cone of null-translations of $P$ is $\Ad(P)$-invariant.

Let $g \in \PO(2,n)$ such that $\Ad(g)X \in \g_{\alpha+\beta}$.  Recall that we identify the Lie algebra $\so(2,n)$ with that of conformal vector fields of $\Ein^{1,n-1}$.

\begin{lemma}
Let $X \in \so(2,n)$.  If $X$ is a light-like translation of some Minkowski patch $M_x$,  then its fixed points form a light-like geodesic $\Delta \subset \Ein^{1,n-1}$,  \textit{i.e.} the projectivization of a null plane in $\R^{2,n}$.

For any $y \in \Delta$,  $X$ is still a light-like translation of the Minkowski patch $M_y$.
\end{lemma}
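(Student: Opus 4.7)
The plan is to exploit $G$-equivariance to reduce $(x,X)$ to an explicit normal form in $\R^{2,n}$, to compute the fixed-point set by direct linear algebra, and finally to transfer the ``light-like translation'' property from $x$ to every $y \in \Delta$ by invoking the centralizer of $X$ together with a Weyl-type involution.

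First, since both conclusions of the lemma are $G$-equivariant, I would bring the pair $(x,X)$ to a normal form. Transitivity of $G$ on $\Ein^{1,n-1}$ lets me assume $x=x_0=[e_0]$. The isotropy $G_0 \simeq \R_{>0} \times O(1,n-1)$ acts conformally on $\g_{x_0}^+ \simeq \R^{1,n-1}$ with a single orbit on the nonzero null vectors, so after a further conjugation I may take $X$ to be a fixed generator of the root space $\g_{\alpha+\beta}$.

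Second, with respect to the basis $(e_0,\ldots,e_{n+1})$ this $X$ is represented by a nilpotent matrix with $X^2=0$: it kills $e_0, e_1, e_2, \ldots, e_{n-1}$ and sends $e_n$ into $\R e_0$ and $e_{n+1}$ into $\R e_1$. The fixed points of $X$ on $\Ein^{1,n-1}$ are the projectivizations of null vectors $u$ with $Xu \in \R u$; nilpotency reduces this to $u \in \ker X = \Span(e_0, e_1, \ldots, e_{n-1})$. The restriction of the quadratic form $Q$ to $\ker X$ is $u_2^2 + \cdots + u_{n-1}^2$, whose null vectors form precisely the totally null $2$-plane $\Span(e_0, e_1)$. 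Hence the fixed locus is $\Delta := \mathbb{P}(\Span(e_0,e_1))$, a light-like projective line.

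Third, to transfer the ``light-like translation of $M_\bullet$'' property from $x_0$ to every $y \in \Delta$, I would use centralizing symmetries rather than recompute stereographic projections point by point. Let $V \in \g_{\beta-\alpha}$ be the element with $Ve_0 = e_1$, $Ve_n = -e_{n+1}$ and zero otherwise: since $(\beta-\alpha)+(\alpha+\beta) = 2\beta$ is not a root of $B_2$, one has $[V,X]=0$, and since $V^2 = 0$ the flow $e^{sV} = I + sV$ sends $[e_0]$ to $[e_0 + se_1]$ while fixing $[e_1]$. As $e^{sV}$ centralizes $X$, the $G$-equivariance of the notion ``light-like translation of $M_\bullet$'' yields the conclusion for every $y \in \Delta \setminus \{[e_1]\}$. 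For the remaining point, I would invoke the involution $w_0 \in O(2,n)$ swapping $e_0 \leftrightarrow e_1$ and $e_n \leftrightarrow e_{n+1}$: it sends $[e_0]$ to $[e_1]$ and a direct check shows $\Ad(w_0)X = -X$; since the null cone of $\g_{[e_1]}^+$ is stable under scalars, $X$ is a light-like translation of $M_{[e_1]}$ as well.

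The main obstacle lies in identifying the correct symmetries rather than in executing any individual calculation. One must locate within the centralizer of $X$ a subgroup acting non-trivially on $\Delta$, which requires a careful bookkeeping of the $B_2$ root system to verify that the bracket with $\g_{\beta-\alpha}$ vanishes; the remaining fixed point of $\Delta$ must be handled separately via a Weyl-type involution, for which one must also check that it lies in $O(2,n)$ and that conjugation sends $X$ to a scalar multiple of itself. Once these ingredients are in place, the lemma follows without ever writing down a stereographic projection at a point of $\Delta$ other than $x_0$.
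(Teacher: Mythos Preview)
Your argument is correct. For the first assertion, you and the paper proceed identically: reduce by $G$-equivariance to $x=x_0$ and $X$ a generator of $\g_{\alpha+\beta}$, then compute the fixed locus as $\mathbb{P}(\ker X)\cap\Ein^{1,n-1}=\mathbb{P}(\Span(e_0,e_1))$.

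For the second assertion the approaches genuinely diverge. The paper normalizes the \emph{pair} $(x,y)$ to $([e_0],[e_1])$, invoking transitivity of $G$ on ordered pairs of distinct points lying on a common light-like geodesic; it then writes down a stereographic projection centered at $[e_1]$ by a formula analogous to the one at $x_0$ and checks by explicit computation that $X$ pulls back to a light-like translation there. You instead keep $(x,X)$ normalized and let $y$ vary, transporting the conclusion along $\Delta$ via elements of $G$ that send $X$ to a scalar multiple of itself: the flow $\exp(sV)$ with $V\in\g_{\beta-\alpha}$ centralizes $X$ (since $2\beta$ is not a root of $B_2$) and sweeps $[e_0]$ over $\Delta\setminus\{[e_1]\}$, while the involution $w_0$ swapping $e_0\leftrightarrow e_1$, $e_n\leftrightarrow e_{n+1}$ handles the remaining point and conjugates $X$ to $-X$. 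Your route avoids ever introducing a second stereographic chart and makes the underlying symmetry transparent; the paper's route is more pedestrian but self-contained, requiring no root-system bookkeeping beyond locating $\g_{\alpha+\beta}$.
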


\begin{proof}
For the first point,  we just have to see that it is true for $X = X_{\alpha+\beta} \in \p^+$ by homogeneity.  It is then a straightforward verification.

For the second point,  we may assume that $x = x_0 = [1:0:\cdots:0]$ and $y=[0:1:0:\cdots:0]$ because $x$ and $y$ belong to a common light-like geodesic.  A stereographic projection at $y$ is then given by a similar formula as for the projection at $x_0$,  and an explicit computation gives the result.
\end{proof}

Let $x _0 \in \Ein^{1,n-1}$  be the point such that $P=G_{x_0}$ and let $x=g.x_0$. Let $s_0=s_{x_0}$ be the stereographic projection introduced above and let $s = g s_0$.  Then,  $(s_0)^* Z = s^*((g^{-1})^*Z) = s^*(\Ad(g)Z)$.  Let $Z' = \Ad(g)Z$.  Then $Z'$ fixes $x$ because $Z$ fixes $x_0$,  and since $Z'$ also fixes $x_0$ and is a light-like translation of $M_{x_0}$,  the previous lemma implies that $Z'$ is also a light-like translation of $M_x$.  So,  $(s_0)^*Z$ is a light-like translation, as claimed.
\end{proof}

\subsection{Light-like singularities in $\n_k$} 
\label{ss:singularitiesInNk}

We prove now point (3) in Theorem \ref{thm:nilpotent}.
Let $N$ be a non-abelian nilpotent Lie group of nilpotence degree $k+1$.  Assume that $N$ acts locally faithfully and conformally essentially on a compact Lorentzian manifold $(M,g)$.  

\begin{lemma}
There exist $X \in \n_{k-1}$ and $Y \in \n$ such that the vector field $[X,Y]$ is non-zero and has a singularity.
\end{lemma}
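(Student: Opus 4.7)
The plan is to argue by contradiction, assuming that for every $X \in \n_{k-1}$ and every $Y \in \n$, the bracket $[X,Y]$ is either identically zero or nowhere vanishing on $M$. The goal is to feed this hypothesis into the machinery developed in Section \ref{s:inessential} and conclude that $N_k$ acts locally freely, which by Proposition \ref{prop:inessential_nonabelian} would force $N$ to be inessential — contradicting the standing essentiality assumption.

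First I would check that whenever $X \in \n_{k-1}$, $Y \in \n$ and $Z := [X,Y] \neq 0$, the triple $(X,Y,Z)$ spans a copy of $\heis(3)$. Indeed, since $\n$ has nilpotence degree $k+1$, and the lower central series satisfies $[\n_i,\n_j]\subset \n_{i+j+1}$, one has $[X,Z]\in \n_{2k}=0$ and $[Y,Z]\in \n_{k+1}=0$ for all $k\geq 1$. Under the contradiction hypothesis $Z$ is nowhere vanishing, so by Remark \ref{rem:heis_localement_libre} the corresponding Heisenberg action is locally free, and Lemma \ref{lem:heis3_triple_inessential} then gives that $Z$ is everywhere isotropic.

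Next, I would invoke Lemma \ref{lem:nk_dim1}: having just verified that every $[X,Y]$ with $X\in\n_{k-1}$, $Y\in\n$ is either zero or nowhere vanishing and isotropic, this lemma forces $\dim \n_k=1$. Consequently, every non-zero element of $\n_k$ is a scalar multiple of a single nowhere-vanishing vector field, hence itself nowhere vanishing; that is, the one-parameter group $N_k$ acts locally freely on $M$. Proposition \ref{prop:inessential_nonabelian} then yields that $N$ is inessential, which is the desired contradiction, since we assumed $N$ essential.

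The only subtlety I foresee is making sure that the Heisenberg triple $(X,Y,Z)$ satisfies the locally-free hypothesis needed in Lemma \ref{lem:heis3_triple_inessential}; this is exactly what Remark \ref{rem:heis_localement_libre} delivers on a compact manifold from the non-vanishing of $Z$. Once that point is handled, the rest is a bookkeeping assembly of the lemmas from Section \ref{ss:inessential_nonabelian}, with no additional computation required.
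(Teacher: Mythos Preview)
Your proposal is correct and follows essentially the same route as the paper's proof: contradiction hypothesis, apply Lemma~\ref{lem:heis3_triple_inessential} to each $\heis(3)$-triple to get isotropy, invoke Lemma~\ref{lem:nk_dim1} to force $\dim \n_k = 1$, then conclude via Proposition~\ref{prop:inessential_nonabelian}. You are simply more explicit than the paper in verifying the bracket relations $[X,Z]=[Y,Z]=0$ and in citing Remark~\ref{rem:heis_localement_libre} for the locally-free hypothesis of Lemma~\ref{lem:heis3_triple_inessential}, which is fine.
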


\begin{proof}
If not,  then we can apply Lemma \ref{lem:heis3_triple_inessential} to any $\heis(3)$-triple of the form $(X,Y,[X,Y])$ with $X \in \n_{k-1}$ and $Y \in \n$ non-commuting and obtain that $[X,Y]$ is everywhere isotropic.  We can then apply Lemma \ref{lem:nk_dim1} to conclude that $\dim \n_k=1$.  But this contradicts the essentiality of $N$ by Proposition \ref{prop:inessential_nonabelian} since $\n_k = \R.[X,Y]$ would act locally freely.
\end{proof}

Let $\h$ be the Lie algebra of conformal vector fields of $(M,g)$ spanned by $(X,Y,Z)$,  with $Z := [X,Y]$ given by the previous lemma.  Let $H<\Conf(M,g)$ denote the associated integral subgroup.  Note that the compact subset $K=\{x \in M \ : \ Z_x=0\}$ is $H$-invariant.

\begin{lemma}
\label{lem:Znull_translation}
There exists $x \in K$ at which the holonomy of $Z$ is a null-translation in $\p^+$.  
\end{lemma}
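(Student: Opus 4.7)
My plan is to apply the embedding theorem (Theorem \ref{thm:bfm}) to the $H$-action on $K$. Since $H$ is nilpotent hence amenable, and $K$ is a non-empty, compact, $H$-invariant set, there is an $H$-invariant Borel probability measure $\mu$ on $K$. Apply Theorem \ref{thm:bfm} with $S=H$. Because $H \cong \Heis(3)$ is unipotent, $\Ad_{\h}(H) \cong H/Z(H)$ is already algebraic and unipotent, so $\bar{H}_d = \Ad_{\h}(H)$. For $\mu$-almost every $x \in K$, and every $\hx \in \pi^{-1}(x)$, one obtains an algebraic subgroup $\check{H} < \Ad_{\g}(P)$ preserving $\iota_{\hx}(\h)$, whose induced action on $\h$ (via $\iota_{\hx}$) coincides with $\Ad_{\h}(H)$. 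At the Lie algebra level there are elements $\check{X}, \check{Y} \in \p$ (viewed in $\g$) which, through $\iota_{\hx}$, act on $\iota_{\hx}(\h)$ the way $\ad(X)$ and $\ad(Y)$ act on $\h$. In particular,
\[
[\check{X}, \iota_{\hx}(Y)] = \iota_{\hx}(Z), \qquad [\check{X}, \iota_{\hx}(Z)] = 0.
\]

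Now I use that $x \in K$, which means $Z_x = 0$, so $\hat{Z}_{\hx}$ is vertical at $\hx$, and in particular $\iota_{\hx}(Z) = \omega_{\hx}(Z) \in \p$. By the second assertion of Lemma \ref{lem:iota_crochet}, the curvature contribution in the bracket involving a vertical vector vanishes:
\[
\iota_{\hx}([Y,Z]) = -[\iota_{\hx}(Y), \iota_{\hx}(Z)].
\]
Since $[Y,Z] = 0$ in $\h$ and $\iota_{\hx}(Z) \neq 0$, this forces $[\iota_{\hx}(Y), \iota_{\hx}(Z)] = 0$. Combined with the two relations coming from $\check{X}$, the triple $(\check{X}, \iota_{\hx}(Y), \iota_{\hx}(Z))$ is a genuine $\heis$-triple in $\g = \so(2,n)$, whose central element $\iota_{\hx}(Z)$ lies in $\p$.

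At this point Proposition \ref{prop:heis_triple_so2n} applies and yields $p \in P$ with $\Ad(p) \iota_{\hx}(Z) \in \g_{\alpha+\beta}$. Since holonomies transform by $\omega_{\hx \cdot q}(Z) = \Ad(q^{-1}) \omega_{\hx}(Z)$, replacing $\hx$ by $\hx \cdot p^{-1}$ produces a point of $\pi^{-1}(x)$ at which the holonomy of $Z$ lies in $\g_{\alpha+\beta} \subset \p^+$, i.e.\ is a null-translation, which is the conclusion of the lemma.

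The delicate step is the passage from the BFM conclusion to a bona fide $\heis$-triple in $\g$: \emph{a priori} the curvature term in Lemma \ref{lem:iota_crochet} obstructs $\iota_{\hx}$ from being a Lie algebra morphism on $\h$, so one would not expect $(\iota_{\hx}(X), \iota_{\hx}(Y), \iota_{\hx}(Z))$ to inherit the Heisenberg relations. The key observations that defuse this are that (i) BFM furnishes an \emph{external} element $\check{X} \in \p$ giving the crucial relation $[\check{X}, \iota_{\hx}(Y)] = \iota_{\hx}(Z)$ without curvature correction, and (ii) the remaining bracket $[\iota_{\hx}(Y), \iota_{\hx}(Z)]$ is computed at a point where $\hat{Z}$ is vertical, which is precisely the hypothesis that kills the curvature contribution in Lemma \ref{lem:iota_crochet}.
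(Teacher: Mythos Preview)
Your argument follows the same strategy as the paper: apply Theorem~\ref{thm:bfm} on the $H$-invariant compact set $K$ to manufacture an element of $\p$ playing the role of $\ad(X)$, then combine with Lemma~\ref{lem:iota_crochet} (using that $\hat{Z}$ is vertical over $K$) to get an $\heis$-triple in $\so(2,n)$ with center in $\p$, and conclude via Proposition~\ref{prop:heis_triple_so2n}. This is exactly right in outline.

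There is, however, a genuine gap. Theorem~\ref{thm:bfm} is stated under the hypothesis that the action $H \to \Aut(M,\hat{M},\omega)$ is a \emph{proper} Lie group homomorphism. You apply it with the ambient group equal to the integral subgroup $H < \Conf(M,[g])$ associated to the $\heis(3)$-triple, but nothing guarantees that $H$ is closed in $\Conf(M,[g])$, so the properness hypothesis is unverified. The paper handles this by replacing $H$ with its closure $H'$ in $\Conf(M,[g])_0$, for which the inclusion is automatically proper, and then invoking Lemma~\ref{lem:closure_nilpotent} to ensure that $H'$ is still $2$-step nilpotent. The latter point is not cosmetic: one needs nilpotence of $\h'$ so that the discompact radical of $\Ad_{\h'}(S)$ is all of $\Ad_{\h'}(S)$, which is what yields the external element $\check{X} \in \p$ reproducing $\ad(X)$ on $\iota_{\hx}(\h')$. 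With this correction (ambient group $H'$, and $S$ either $\{e^{tX}\}$ as in the paper or $H$ itself), your argument goes through unchanged, since $\h \subset \h'$ and the relations you use only involve $\iota_{\hx}(Y)$ and $\iota_{\hx}(Z)$.
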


\begin{proof}
We see $H$ as an integral subgroup of $\Conf(M,[g])_0$ and we note $H'$  the closure of $H$ in $\Conf(M,[g])_0$.  As seen in Lemma \ref{lem:closure_nilpotent},  $H'$ is a $2$-step nilpotent Lie subgroup of $\Conf(M,[g])$.  Let $X \in \h$ be a non-central element,  $Y \in \h$ such that $[X,Y]=Z$ and let $S = \{e^{tX}\}_{t\in \R}$.  We apply Theorem \ref{thm:bfm} to the pair $(S,H')$.   Let $\mu$ be a finite $S$-invariant measure supported in $K$. As $H'$ is nilpotent,  $\bar{S}^d = \Ad_{\h'}(S)$,   and then for $\mu$-almost every $x \in M$ and for all $\hx \in \pi^{-1}(x)$,  we obtain a one parameter subgroup $\{e^{tX'}\} \subset P$ such that the map $\iota_{\hx} : \h' \to \g$ conjugates the action of $\Ad(e^{tX})$ on $\h'$ and that of $\Ad(e^{tX'})$ on $\iota_{\hx}(\h')$.

In particular,  we obtain that $[X',\iota_{\hx}(Y)]=\iota_{\hx}(Z)$,  $[X',\iota_{\hx}(Z)]=0$.  Because $x \in K$,  the lift of $Z$ to $\hat{M}$ is tangent to the fiber of $\hat{M}$,  meaning that $\iota_{\hx}(Z) \in \p$.  Applying Lemma \ref{lem:iota_crochet},  we get $[\iota_{\hx}(Y),\iota_{\hx}(Z)]=0$.  Hence,  $(X',\iota_{\hx}(Y),\iota_{\hx}(Z))$ generates a copy of $\heis(3)$ in $\so(2,n)$ whose center is contained in $\p$.  By Proposition \ref{prop:heis_triple_so2n}, we obtain that $\iota_{\hx}(Z)$ is a null-translation in $\p^+$ as announced.
\end{proof}

Therefore,  by Proposition \ref{prop:holonomy_uniptotent},  an open subset of $M$ is conformally flat.  In particular,  we have an embedding $\n \hookrightarrow \so(2,n)$.  Applying Proposition 3.3 of \cite{frances_melnick_nilpotent},  it follows that $k \leq 3$ and $\n_k$ is a line,  and this concludes the proof of Theorem \ref{thm:nilpotent}.

\subsection{Semi-simplicity of $\rho$ and reduction to a semi-direct product.}
\label{ss:semi-simplicity}

Let $R<\Conf(M,g)$ be a connected solvable Lie subgroup. We assume that $R$ is not nilpotent and has abelian nilradical $N$.  By Propositions \ref{prop:inessential_abelian} and \ref{prop:locallyFreeOpenDense},  $N$ has dimension at most $n$ and there exists $x\in M$ with $\dim N.x \leq 1$, and if $\dim N.x = 1$ then it is light-like. 

We note $\rho : X \in \gr \mapsto \ad(X)|_{\n} \in \gl(\n)$.  By assumption,  $\rho$ is trivial in restriction to $\n$ and has abelian image since $[\gr,\gr] \subset \n$.  

\begin{proposition}
\label{prop:rho_semisimple}
Let $X \in \gr$. If $\rho(X)$ is not a semi-simple element of $\gl(\n)$, then a vector field of $\n$ admits a singularity at which its holonomy is a light-like translation.  In particular,  some non-empty open subset is confomally flat.
\end{proposition}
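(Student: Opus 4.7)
The plan is to exploit the failure of semi-simplicity to produce a chain $Y, Z \in \n$ with $Z$ lying in both the image and the kernel of the nilpotent part of $\rho(X)$, then to combine Corollary~\ref{cor:virtual_isotropy} with Proposition~\ref{prop:heis_triple_so2n} to exhibit a singularity of $Z$ whose holonomy is a light-like translation; conformal flatness of an open subset will then follow from Proposition~\ref{prop:holonomy_uniptotent}. Since $\n$ is abelian, $\rho|_{\n} = 0$, so $X \notin \n$. Writing $\rho(X) = S' + N'$ for its real Jordan decomposition, the nilpotent part $N' \neq 0$ restricts to a nilpotent endomorphism of $N'(\n)$ and hence admits a non-trivial kernel there, so we may pick $Z \in (\ker N' \cap N'(\n)) \setminus \{0\}$ and $Y \in \n$ with $N'(Y) = Z$. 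This yields $[X,Y] = S'(Y) + Z$, $[X,Z] = S'(Z)$, and $[Y,Z] = 0$.

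The key geometric step is to locate a zero of $Z$. Let $K_1 := \{x \in M : \dim N.x \leq 1\}$, which, by the discussion preceding the proposition, is closed, $R$-invariant and non-empty. Since $R$ is solvable hence amenable, there is an $R$-invariant probability measure $\mu$ supported in $K_1$; in particular it is $S$-invariant for $S := \{e^{tX}\}$, and Corollary~\ref{cor:virtual_isotropy} applies: at $\mu$-almost every $x \in K_1$, the isotropy $\n_x$ is stable under the discompact radical $\bar{S}^d \subset \GL(\gr)$. The unipotent one-parameter subgroup $\{e^{tN'}\}$ belongs to $\bar{S}^d$, so $\n_x$ is $N'$-stable; but $\n / \n_x$ has dimension at most one, on which the induced nilpotent endomorphism must vanish, forcing $N'(\n) \subset \n_x$. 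In particular $Z = N'(Y)$ vanishes at $x$.

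With such a singularity in hand, fix $\hx \in \pi^{-1}(x)$ and invoke the full strength of Theorem~\ref{thm:bfm}: it produces an algebraic $\chech{S} < \Ad_{\g}(P)$ preserving $\iota_{\hx}(\gr)$ and inducing $\bar{S}^d$ on $\gr$. The lift of the unipotent subgroup $\{e^{tN'}\}$ takes the form $\{\Ad(e^{t\tilde U})\}$ for some $\tilde U \in \p$ with $\ad \tilde U$ nilpotent on $\g$. Setting $\tilde Y := \iota_{\hx}(Y)$ and $\tilde Z := \iota_{\hx}(Z)$, the identity $\Ad(e^{t\tilde U})\tilde Y = \tilde Y + t\tilde Z$ differentiates to $[\tilde U, \tilde Y] = \tilde Z$ and $[\tilde U, \tilde Z] = 0$, while $[\tilde Y, \tilde Z] = 0$ follows from Lemma~\ref{lem:iota_crochet} together with the horizontality of $\Omega$ at the vertical vector $\hat Z_{\hx}$. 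Hence $(\tilde U, \tilde Y, \tilde Z)$ is an $\heis$-triple in $\so(2,n)$ whose centre $\tilde Z = \omega_{\hx}(\hat Z)$ lies in $\p$, and Proposition~\ref{prop:heis_triple_so2n} identifies it as a light-like translation. Such an element is a non-trivial unipotent in $\p^+$ and, acting on the Minkowski patch without fixed points, is not conjugate into $G_0$, so Proposition~\ref{prop:holonomy_uniptotent} delivers the desired non-empty conformally flat open subset of $M$ containing $x$ in its closure.

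The main obstacle is the first step, bridging the abstract Jordan-chain data for $\rho(X)$ and the geometric requirement that $Z$ actually vanishes at a point of $M$. The mechanism is that the unipotent part of $\bar{S}^d$, automatically present once $\rho(X)$ fails to be semi-simple, acts nilpotently on the low-dimensional quotient $\n / \n_x$ at generic points of $K_1$ and is therefore forced to send all of $\n$ into the isotropy algebra.
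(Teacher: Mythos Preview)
Your argument follows essentially the same route as the paper's: take the nilpotent part of the Jordan decomposition of $\ad(X)$, apply Theorem~\ref{thm:bfm} on the locus $K$ where $N$-orbits are at most one-dimensional, produce an $\heis$-triple in $\so(2,n)$ whose center lies in $\p$, and conclude with Propositions~\ref{prop:heis_triple_so2n} and~\ref{prop:holonomy_uniptotent}. Your way of showing $Z(x)=0$ --- observing that the induced nilpotent endomorphism on the at-most-one-dimensional quotient $\n/\n_x$ must vanish --- is a clean variant of the paper's direct bracket computation $[X',\alpha Y'+\beta Z']=\alpha Z'\in\p$.

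The one technical point you skip is the properness hypothesis of Theorem~\ref{thm:bfm}. You apply the theorem (and its corollary) with the ambient group $R$, but $R$ is only assumed to be a connected solvable Lie subgroup of $\Conf(M,[g])$, not a closed one. The paper instead works with the integral subgroup $H$ tangent to $\h=\R X\oplus\n$ and devotes a paragraph to verifying that $H$ is closed --- the argument uses precisely that $\rho(X)$ has non-trivial nilpotent part to force boundedness of the $\R X$-component along any convergent sequence in $H$. This is easily absorbed into your proof by replacing $R$ with $H$ throughout; all the ingredients you need ($\n$ being an ideal, $\{e^{tN'}\}\subset\bar S^d$, the map $\iota_{\hx}$) survive this change unchanged.
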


\begin{proof}
Let $\h = \R.X \oplus \n$.  We first observe that the associated connected subgroup $H < R$ is closed.  To see it,  let $H' = \{e^{tX}e^Y, \ t \in \R,  \ Y \in \n\}$.  It is a connected subgroup,  contained in $H$.  If we prove that $H'$ is closed,  we will have $H'=H$.  Let then $g = \lim e^{t_n X}e^{Y_n}$ be a limit of elements of $H'$.  In particular,  we have a bounded sequence $g_n \in N_R(\n)$ such that $e^{t_n X} = g_n e^{-Y_n}$.  Since $\n$ is abelian,  we get that $\Ad(e^{t_n X}) |_{\n} = \Ad(g_n)|_{\n}$ must be bounded too.  Since,  the Jordan decomposition of $\rho(X)$ has a non-zero nilpotent component and $\Ad(e^{t_n X})|_{\n} = e^{t_n \rho(X)}$,  $(t_n)$ must be bounded.  So,  up to an extraction $e^{Y_n}$ converges,  and the limit is in $N = \{e^Y, \ Y \in \n\}$, the latter being closed because $\n$ is the  (abelian) nilradical of $\gr$

Let $\ad(X)|_{\h} = X_{ss} + X_u$ be the Jordan decomposition in $\gl(\h)$.  By hypothesis,  $X_u \neq 0$.  We apply now Theorem \ref{thm:bfm} to the pair $(H,S)$ where $S = \{e^{tX}\}$.  Since the Zariski closure of $\Ad_{\h}(S)$ contains $\{e^{tX_u}\}$ which is Zariski closed,  then so does the discompact radical $\bar{S}^d$ (\cite{shalom},  Prop. 1.4).   
Since $K: = \{x \in M \ |  \dim(N.x) \leq 1\}$ is closed and $S$-invariant,  considering an $S$-invariant measure supported in $K$,  we deduce that there exists $x \in K$ such that for all $\hx \in \pi^{-1}(x)$,  there exists $X' \in \p$ such that $[X',\iota_{\hx}(Y)] = \iota_{\hx}(X_u(Y))$ for all $Y \in \h$.  Choose $Y \in \n$ such that $Z:=X_u(Y) \neq 0$ and $X_u^2(Y)=0$.  If we note $Y'=\omega_{\hx}(Y)$ and $Z' = \omega_{\hx}(Z)$,  we then have $[X',Y']=Z'$,  $[X',Z']=0$.  Since $\dim N.x \leq 1$,  $\Span(Y',Z') \cap \p \neq 0$,  so we must have $Z' \in \p$ because if $\alpha Y' + \beta Z' \in \p$,  then $[X',\alpha Y' + \beta Z'] = \alpha Z' \in \p$.  By Lemma \ref{lem:iota_crochet},  we get $[Y',Z']= \iota_{\hx}([Y,Z])=0$,  so $(X',Y',Z')$ is an $\heis$-triple of $\so(2,n)$ with $Z' \in \p$.  By Proposition \ref{prop:heis_triple_so2n},  we get that the holonomy of $Z$ at $x$ is a light-like translation.
\end{proof}

\begin{proposition}
\label{prop:split_exact_sequence}
Assume that every element of $\rho(\gr)$ is semi-simple.  Then,   the short exact sequence $\n \to \gr \to \gr / \n $ is split modulo the center, \textit{i.e.} there exists $\a \subset \gr$ such that $\gr  = \a \oplus \n$ and $[\a,\a] \subset \z(\gr)$. In particular,  if $[\a,\a] \neq 0$, then $\gr$ contains an $\heis$-triple.  Moreover,  every $\heis$-triple of $\gr$ is essential.  
\end{proposition}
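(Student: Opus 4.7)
The proposition contains three assertions; (2) is immediate from (1), so my plan concentrates on the cohomological splitting in (1) and on the essentiality statement in (3), which I expect to be the main obstacle.

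For (1), the short exact sequence $0\to \n\to \gr\to \gr/\n\to 0$ realizes $\gr$ as an extension of an abelian algebra by the abelian $\gr/\n$-module $\n$ with action $\rho$. I fix a vector space complement $\a_0$ of $\n$, lift a basis of $\gr/\n$ to $X_1,\dots,X_r\in\a_0$, and record $c_{ij}:=[X_i,X_j]\in\n$; by Jacobi, $c$ is a $2$-cocycle on $\gr/\n$ with values in $\n$, and replacing each $X_i$ by $X_i+Z_i$ with $Z_i\in\n$ modifies $c$ by the coboundary $(\delta Z)_{ij}=\rho(\bar X_i)Z_j-\rho(\bar X_j)Z_i$. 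The semisimplicity hypothesis makes the commuting family $\rho(\gr/\n)\subset \gl(\n)$ simultaneously diagonalizable over $\C$, producing the weight decomposition $\n^{\C}=\bigoplus_{\alpha}\n^{\C}_{\alpha}$. For each nonzero weight $\alpha$, I pick any lift $X_{*}$ with $\alpha(\bar X_{*})\neq 0$; projecting the Jacobi identity for $(X_{*},X_i,X_j)$ onto $\n^{\C}_{\alpha}$ exhibits $c_{ij}^{\alpha}$ as the coboundary of $Z_i^{\alpha}:=-\alpha(\bar X_{*})^{-1}c_{*i}^{\alpha}$, and since the $c_{*i}$ are real, conjugate weights contribute complex-conjugate terms so that the total correction $Z_i:=\sum_{\alpha\neq 0}Z_i^{\alpha}$ lies in $\n$. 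What remains of the cocycle after this adjustment is its zero-weight part, valued in $\n^{0}=\{Y\in\n:[\gr,Y]=0\}=\z(\gr)\cap \n$; since the maximality of the nilradical forces $\z(\gr)\subset \n$ (otherwise $\R X+\n$ would strictly enlarge $\n$ as a nilpotent ideal for any central $X\notin \n$), this is precisely $\z(\gr)$, and the new complement $\a:=\Span(X_i+Z_i)$ satisfies $[\a,\a]\subset \z(\gr)$. Claim (2) is then immediate: for $X,Y\in\a$ with $Z:=[X,Y]\neq 0$, the centrality of $Z$ gives $[X,Z]=[Y,Z]=0$, so $(X,Y,Z)$ is an $\heis$-triple.

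For (3), suppose to the contrary that an $\heis$-triple $(X,Y,Z)\subset\gr$ is inessential, and replace $g$ by an $H$-invariant metric in the conformal class. Remark \ref{rem:isometric_heis} rules out zeros of $Z$, so the $\heis(3)$-action is locally free (Remark \ref{rem:heis_localement_libre}) and Lemma \ref{lem:heis3_triple_inessential} gives that $Z$ is everywhere isotropic, $\h\perp Z$, and $X$ is everywhere spacelike; in particular $\varphi:=g(X,X)>0$ on $M$. My plan is to contradict the essentiality of $R$ by showing $g/\varphi$ is $R$-invariant. A direct computation gives $\mathcal{L}_W(g/\varphi)=-2g([W,X],X)\,g/\varphi^{2}$ for every conformal $W\in\gr$, so the task reduces to proving the orthogonality $g([W,X],X)\equiv 0$ for all $W$. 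Since $[W,X]\in\n$, I would decompose $[W,X]$ along the weight spaces of $\rho$ from part (1) and argue weight by weight: for a nonzero weight $\alpha$ and $\eta\in\n^{\C}_{\alpha}$, a suitable element of $\a$ scales $\eta$ by $e^{t\alpha(\bar A)}$ modulo $\z(\gr)$, and the boundedness of $\varphi$ on compact $M$ forces the pairing $g(X,\eta)$ to vanish; the zero-weight components, lying in $\z(\gr)$ and commuting with $X$, are handled by an argument mirroring the propagation of $g(X,Z)=0$ in Lemma \ref{lem:heis3_triple_inessential}. Making these orthogonality statements rigorous, notably the interplay between conformal distortion and bounded geometry, is the main technical hurdle of the proof.
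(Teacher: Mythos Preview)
Your treatment of (1) is correct and matches the paper's argument: the paper does the same cohomological correction, just in explicit coordinates (choosing a simultaneous eigenbasis of $\n$, writing out the equations for the modified lifts $\tilde X^l$, and observing that Jacobi reduces the full system to the equations involving a single fixed index). Your observation that $\z(\gr)\subset\n$ and that the zero-weight space of $\rho$ equals $\z(\gr)$ is exactly what underlies the paper's statement that the residual cocycle lands in $\Span(Z_1,\dots,Z_t)$. Part (2) is immediate in both.

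For (3), however, your strategy diverges from the paper and carries a real gap. You aim to contradict essentiality of $R$ by showing $g/\varphi$ is $R$-invariant via $g([W,X],X)\equiv 0$, and you propose to force $g(X,\eta)=0$ for nonzero-weight $\eta\in\n$ using the flow of a ``suitable element of $\a$''. The problem is that such an $A\in\a$ acts only conformally on $g$, and controlling its distortion through $\varphi$ requires $[A,X]=0$; but $X$ is an arbitrary member of an $\heis$-triple in $\gr$, with no reason to commute with any $A$ satisfying $\alpha(A)\neq 0$. Without that commutation the pushforward $(\phi_A^t)_*X$ acquires extra terms and your boundedness argument does not close. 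The paper avoids this entirely by running the propagation \emph{with the isometric flows already at hand}: for $U\in\Span(X,Y)$ the distortion is $1$, and computing $(\phi_U^t)_*V$ for each weight vector $V\in\n$ one shows successively that $\lambda_i$, $\mu_i$, $\nu_j$ all vanish on $\Span(X,Y)$. Hence $X$ and $Y$ centralize $\n$, so $X,Y\in\n$ by maximality of the nilradical, contradicting $[X,Y]\neq 0$. This is both simpler than your route and slightly stronger: it yields the essentiality of the $\heis$-triple without invoking the standing hypothesis that $R$ is essential.
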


\begin{proof}
By our hypothesis on semi-simplicity,  and the fact that elements in $\gr$ commute pairwise,    we have a common (complex) diagonalisation basis of $\n$ of the form  
\begin{align*}
(X_1,Y_1,\ldots,X_r,Y_r,T_1,\ldots,T_s,Z_1,\ldots,Z_t)
\end{align*}
and linear functions $\lambda_i,\mu_i$ and $\nu_j$ in $\gr^*$ such that $\mu_i \neq 0$ and $\nu_j \neq 0$ and for all $X \in \gr$
\begin{align*}
\begin{cases}
[X,X_i] & = \lambda_i(X) X_i + \mu_i(X)Y_i \\
[X,Y_i] & = -\mu_i(X)X_i + \lambda_i(X) Y_i \\
[X,T_j] & = \nu_j(X)T_j \\
[X,Z_k] & = 0
\end{cases}
\end{align*}
Let $d = \dim \gr/\n$ and let us choose $X^1,\ldots,X^d \notin \bigcup_i \ker \mu_i \cup \bigcup_j \ker \nu_j$ which project to a basis of $\gr / \n$. For all $l,m$, we have
\begin{align*}
[X^l,X^m] = \sum_i (a_{lm}^iX_i + b_{lm}^iY_i )+ \sum_j c_{lm}^j T_j + \sum_k d_{lm}^kZ_k.
\end{align*}
Now, for $\tilde{X}^l=X^l + \sum_i (\alpha_l^i X_i + \beta_l^iY_i) + \sum_j \gamma_l^jT_j$ we obtain 
\begin{align*}
[\tilde{X}^l,\tilde{X}^m] & =   \sum_i(a_{lm}^i + \lambda_i(X^l)\alpha_m^i - \lambda_i(X^m)\alpha_l^i -\mu_i(X^l)\beta_m^i + \mu_i(X^m)\beta_l^i)X_i \\
& + \sum_i (b_{lm}^i + \lambda_i(X^l)\beta_m^i - \lambda_i(X^m) \beta_l^i + \mu_i(X^l) \alpha_m^i - \mu_i(X^m)\alpha_l^i)Y_i \\
& + \sum_j (c_{lm}^j + \nu_j(X^l)\gamma_m^j - \nu_j(X^m) \gamma_l^j) T_j + \sum_k d_{lm}^k Z_k.
\end{align*}
Let us see that we can adjust the coefficients $\alpha_l^i,\beta_l^i,\gamma_l^i$ such that $[\tilde{X}^l,\tilde{X}^m]\in \Span(Z_1,\ldots,Z_t)$. For a fixed index $i$,  we impose for all $l,m$ the equations 
\begin{align*}
\begin{cases}
a_{lm}^i + \lambda_i(X^l)\alpha_m^i - \lambda_i(X^m)\alpha_l^i -\mu_i(X^l)\beta_m^i + \mu_i(X^m)\beta_l^i = 0 \\
b_{lm}^i + \lambda_i(X^l)\beta_m^i - \lambda_i(X^m) \beta_l^i + \mu_i(X^l) \alpha_m^i - \mu_i(X^m)\alpha_l^i = 0,
\end{cases}
\end{align*}
which we see as the real part and imaginary part of a same equation.  Let us remove temporarily the index $i$.  Our relations read for all $l,m$
\begin{align}
\label{eq:cohom}
A_{lm} = \zeta(X^m) Z_l - \zeta(X^l)Z_m,
\end{align}
where $\zeta = \lambda+i\mu$, $A_{mn} = a_{mn}+ib_{mn}$, and $Z_l  = \alpha_l + i \beta_l$. Note that $A_{mn} = - A_{nm}$. The Jacobi relation between $X^l,X^m,X^n$ then reads
\begin{align*}
A_{mn}\zeta(X^l)  + A_{nl}\zeta(X^m) + A_{lm}\zeta(X^n) = 0.
\end{align*}
Therefore, equation (\ref{eq:cohom}) for $l,m \geq 2$ follows from the equation (\ref{eq:cohom}) for $(1,l)$, $(1,m)$ and the Jacobi relation between $X^1,X^l,X^m$.  So, the solutions $(Z_1,\ldots,Z_d)$ of our systems are the (complex) multiples of 
\begin{align*}
(\zeta(X^1), \frac{A_{2,1}}{\zeta(X^1)} + \zeta(X^2), \ldots,\frac{A_{d,1}}{\zeta(X^1)} + \zeta(X^d)).
\end{align*}
Consequently,  if we choose 
\begin{align*}
\alpha_m^i & = \lambda_i(X^m) + \frac{\lambda_i(X^1)a_{m,1}^i + \mu_i(X^1)b_{m,1}^i}{\lambda_i(X^1)^2+\mu_i(X^1)^2} \\
\beta_m^i & = \mu_i(X^m) + \frac{\lambda_i(X^1)b_{m,1} - \mu_i(X^1)a_{m,1}^i}{\lambda_i(X^1)^2+\mu_i(X^1)^2} \\
\gamma_m^i & = \nu_i(X^m) + \frac{c_{m,1}^i}{\nu_i(X^1)},
\end{align*}
then, we get that $[\tilde{X}^l,\tilde{X}^m] \in \z(\gr)$ for all $l,m$.  

Thus, there exists a section $\a$ of $\gr \to \gr / \n$ such that $[\a,\a] \subset \z(\gr)$.  Assume that $\z(\gr) \neq 0$ and let $X,Y \in \a$ be such that $[X,Y] =:Z \neq 0$.  Then $(X,Y,Z)$ is an $\heis$-triple.

We prove now that any $\heis$-triple of $\gr$ is essential.  Let us assume by contradiction that some $\heis$-triple $(X,Y,Z)$ preserves a metric $g$ in the conformal class.  We know that this triple of vector fields acts everywhere locally freely, with degenerate orbits, and that the orbits of $Z$ give the kernel (Remark \ref{rem:isometric_heis}).  In particular, $g_x(U,U)>0$ everywhere, for every $U \in \Span(X,Y,Z) \setminus \R.Z$.  

For $1 \leq i \leq r$, for all $V \in \Span(X_i,Y_i)$,  we have $(\phi_U^t)_* V_x = e^{-\lambda_i(U)t} [R^{-\mu_i(U)t}V]_{\phi_U^t(x)}$ where $R^{\theta}V$ stands for the standard rotation in the plane $\Span(X_i,Y_i)$.  Thus,
\begin{align*}
g_x(Z,V) & = e^{-\lambda_i(U)t} g_{\phi_U^t(x)}(Z,R^{-\mu_i(U)t}V) \\
g_x(V,V) & = e^{-2\lambda_i(U)t} g_{\phi_U^t(x)}(R^{-\mu_i(U)t}V,R^{-\mu_i(U)t}V).
\end{align*}
Therefore, if $\lambda_i(U) \neq 0$,  then for all $V \in \Span(X_i,Y_i)$, $g(V,V) = g(Z,V)= 0$. By Lemma \ref{lem:collinearVectorFields},  it implies that $V$ is a multiple of $Z$, a contradiction.  So $\lambda_i(U) = 0$ for all $U \in \Span(X,Y)$.  Pick now $U_0 \in \Span(X,Y)$ such that $\mu_i(U_0) = 0$. Then $[U_0,V]=0$ for all $V \in \Span(X_i,Y_i)$.  So,  every $V \in \Span(X_i,Y_i)$ is  a Killing vector field of $g' := g/g(U_0,U_0)$.  Now for any $U$, we have $(\phi_V^t)_* U_x = U_{\phi_V^t(x)} + t[U,V]_{\phi_V^t(x)}$ 
 and using that $\phi_V^t$ is isometric with respect to $g'$, we deduce similarly as above that $[U,V]$ is everywhere light-like and orthogonal to $Z$.  It follows that $\mu_i(X)=\mu_i(Y) =0$ because if for instance $\mu_i(X) \neq 0$,  then Lemma \ref{lem:collinearVectorFields} would imply that $Y_i = \frac{1}{\mu_i(X)} [X,X_i]$ and $X_i = - \frac{1}{\mu_i(X)}[X,Y_i]$ are collinear to $Z$. Thus, for all $1 \leq i \leq r$,  $\lambda_i$ and $\mu_i$ vanish on $\Span(X,Y)$.

For $1 \leq j \leq s$,  from $(\phi_X^t)_* (T_j)_x = e^{-\nu_j(X)t}(T_j)_{\phi_X^t(x)}$,  we deduce that  $\nu_j(X) = 0$,  because if $\nu_j(X) \neq 0$,  then we could prove similarly as above that $T_j$ is  a multiple of $Z$,  a contradiction because $[X,T_j] = \nu_j(X)T_j \neq 0$.  Symmetrically,  $\nu_j(Y) = 0$ for all $j$.  Finally,  both $X$ and $Y$ centralize all of $\n$,   which implies that they belong to $\n$,  which is the desired contradiction because $[X,Y] \neq 0$.
\end{proof}

\subsection{Complex eigenvalues of $\rho(X)$}
\label{ss:eigenvaluesOfRho}
We still assume $N$ abelian and denote by $\rho : \gr \to \gl(\n)$ the representation defined by $\rho(X) = \ad(X)|_{\n}$.  Recall that $G$ denotes $\PO(2,n)$,  $P<G$ the parabolic subgroup introduced in Section \ref{sss:roots}.

\begin{lemma}
\label{lem:sol_like}
Let $X \in \gr$.  If $\rho(X)^{\C}$ has two eigenvalues with distinct, non-zero real parts, then a vector field in $\n$ has a singularity of order $2$.
\end{lemma}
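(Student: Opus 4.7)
The plan is to apply the Bader--Frances--Melnick embedding theorem to the solvable subalgebra generated by $X$ and eigenvectors of $\rho(X)$ in $\n$, and then use the Lorentzian conformal constraint of Corollary~\ref{cor:virtual_isotropy} together with the root-space analysis of $\so(2,n)$ to force the holonomy of some $Y \in \n$ into $\p^+$.

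After reducing to the real-eigenvalue case (by taking real and imaginary parts of complex eigenvectors in $\n^{\C}$), pick $Y_1, Y_2 \in \n$ with $[X, Y_i] = \lambda_i Y_i$, $\lambda_1 \neq \lambda_2$, both non-zero. The subalgebra $\h = \R X \oplus \R Y_1 \oplus \R Y_2$ is solvable, and its integral subgroup $H$ is closed in $\Conf(M,[g])$ by the argument used in the proof of Proposition~\ref{prop:rho_semisimple}; amenability of $S = \{e^{tX}\}$ and compactness of $M$ give a finite $S$-invariant Borel measure $\mu$. Since $\Ad_{\h}(S)$ is $\R$-split with distinct non-zero real eigenvalues, $\bar{S}^d$ coincides with its Zariski closure and acts on $\h$ with eigenvalues $1, e^{t\lambda_1}, e^{t\lambda_2}$. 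At a $\mu$-generic $x_0$ and any $\hx_0 \in \pi^{-1}(x_0)$, Theorem~\ref{thm:bfm} provides an algebraic subgroup $\chech{S} < \Ad_{\g}(P)$ preserving $\iota_{\hx_0}(\h)$ and surjecting onto $\bar{S}^d$; up to $\Ad(P)$-conjugacy, $\chech{S}$ sits inside the $\R$-split Cartan of $\Ad_{\g}(P)$ associated to $\a$, so $\iota_{\hx_0}(\h)$ becomes a sum of root spaces for some direction $A_0 \in \a$, with $\iota_{\hx_0}(Y_i)$ lying in the $\lambda_i$-eigenspace of $\ad(A_0)$.

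Corollary~\ref{cor:virtual_isotropy} additionally forces $\h_{x_0}$ to be $\bar{S}^d$-invariant, hence a sum of $\bar{S}^d$-eigenspaces of $\h$, with the induced action on $\h/\h_{x_0}$ conformal for $[q_{x_0}]$. Since an $\R$-split semisimple conformal endomorphism of a Lorentzian inner product space has eigenvalue spectrum of the form $\{c\mu, c, \dots, c, c/\mu\}$, at most two eigenvalues differ from the conformal factor, while $\{1, e^{t\lambda_1}, e^{t\lambda_2}\}$ yields three distinct eigenvalues. I expect the main obstacle here to be the ensuing case analysis: the cases where $\h_{x_0}$ contains some $Y_j$ immediately deliver a singular point $Y_j(x_0) = 0$ of a vector field of $\n$, while the residual case $\h_{x_0} = \R X$ (meaning $X(x_0) = 0$ but neither $Y_j$ vanishes) must be ruled out using the flow-conjugation identity $(\phi_X^t)^* Y_j = e^{t\lambda_j} Y_j$ and the bounded behavior of the lifted isotropy action on the Cartan fiber over $x_0$ to contradict $\lambda_j \neq 0$; the exceptional arithmetic configurations such as $\lambda_1 = -\lambda_2$ or $\lambda_1 = 2\lambda_2$ are also handled here.

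It then remains to upgrade the resulting singularity to order~$2$. With $Y(x_0) = 0$, the holonomy $Y^h := \omega_{\hx_0}(Y)$ lies in $\p$, and by the root-space localization above it belongs to $\p \cap \bigoplus_{\gamma(A_0) = \lambda_Y} \g_\gamma$; the candidate root spaces in $\p$ with non-zero $\ad(A_0)$-eigenvalue are $\g_{\alpha - \beta}, \g_\alpha, \g_{\alpha + \beta}$ (all contained in $\p^+$) together with $\g_{\pm \beta}$ (contained in $\g_0$). The plan to exclude the $\g_{\pm \beta}$ alternative is to exploit the commutativity $[Y_1, Y_2] = 0$, which by Lemma~\ref{lem:iota_crochet} transfers to $[\iota_{\hx_0}(Y_1), \iota_{\hx_0}(Y_2)] = 0$, combined with the explicit non-triviality of $[\g_\beta, \g_{-\beta}]$ in $\so(2,n)$, to derive a contradiction; alternatively, when $Y^h$ has a non-trivial unipotent component, Proposition~\ref{prop:holonomy_uniptotent} already delivers conformal flatness of an open subset. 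Once $Y^h \in \p^+$ is established, $Y$ has trivial $1$-jet at $x_0$, which is precisely the required singularity of order~$2$, and Proposition~\ref{prop:discrete_holonomy} applied to $\phi_Y^{t_0}$ produces the claimed conformally flat open subset.
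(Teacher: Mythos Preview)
Your overall architecture --- apply Theorem~\ref{thm:bfm}, place the holonomies into root-spaces for some $A_X \in \a$, and use the commutation $[Y_1,Y_2]=0$ to exclude the $\g_{\pm\beta}$ possibilities --- matches the paper's. However, two steps in your plan are genuine gaps, and the paper's proof handles them differently.

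\textbf{You do not restrict the measure to $K=\{x:\dim N.x\le 1\}$.} The paper applies Theorem~\ref{thm:bfm} on a measure supported in $K$, which guarantees \emph{a priori} that some nontrivial combination $\alpha_1 Y_1+\alpha_2 Y_2$ vanishes at $x$; the distinct eigenvalues under $\ad(A_X)$ then force one of $Y_1,Y_2$ individually into $\p$. Your substitute --- the conformal spectrum constraint on $\h/\h_{x_0}$ --- does not rule out $\h_{x_0}=0$: in a $3$-dimensional Lorentzian orbit the spectrum $\{c\mu,c,c/\mu\}$ accommodates three distinct eigenvalues, yielding only the arithmetic relations $\lambda_1+\lambda_2=0$ or $\lambda_1=2\lambda_2$, which you acknowledge but do not treat. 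The case $\h_{x_0}=\R X$ is likewise only sketched.

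\textbf{You need both $Y_1$ and $Y_2$ to vanish at the same point.} Your case analysis produces at most one of them in $\h_{x_0}$. But the exclusion of $\iota_{\hx_0}(Y_1)\in\g_{\beta}$ via $[\iota_{\hx_0}(Y_1),\iota_{\hx_0}(Y_2)]=0$ requires knowing where $\iota_{\hx_0}(Y_2)$ sits: if $Y_2(x_0)\neq 0$, then $\iota_{\hx_0}(Y_2)$ may have its $\g_{-1}$-component in $\g_{-\alpha+\beta}$, which commutes with $\g_{\beta}$, and no contradiction follows. The paper devotes the longest part of its argument to proving, by contradiction, that the second eigenvector field \emph{also} vanishes at $x$ (using that $X$ must then be space-like at $x$, forcing $\alpha(A_X)=0$, and then tracking components of $\iota_{\hx}(X_1),\iota_{\hx}(X_2)$ through the root-space relations). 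Only once both lie in $\p$ does the short sub-lemma on commuting $\ad(A_X)$-eigenvectors in $\p$ force one of them into $\p^+$.

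A minor point: your ``reduction to real eigenvalues'' is not literal. Real and imaginary parts of a complex eigenvector yield a $2$-plane on which $\ad(X)$ is a rotation-dilation, not real eigenvectors; the paper handles this by passing to the $\R$-split component $\{h^t\}$ of the Zariski closure of $\Ad(e^{tX})$, which acts homothetically by $e^{\lambda_i t}$ on these planes and lies in the discompact radical.
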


\begin{proof}
Let  $\lambda_1+i\mu_1$ and $\lambda_2 + i\mu_2$, with $\lambda_1 \neq \lambda_2$ and $\lambda_1,\lambda_2 \in \R \setminus\{0\}$ be two such eigenvalues.  Let us choose $X_1,Y_1,X_2,Y_2 \in \n$ all non-zero such that 
\begin{align*}
\begin{cases}
\rho(X) X_1 = \lambda_1 X_1 + \mu_1 Y_1 \\
\rho(X)Y_1 = -\mu_1 X_1 + \lambda_1 Y_1
\end{cases}
\text{ and }
\begin{cases}
\rho(X)X_2 = \lambda_2 X_2 + \mu_2 Y_2 \\
\rho(X)Y_2 = -\mu_2 X_2 + \lambda_2 Y_2.
\end{cases}
\end{align*}
If $\mu_k=0$,  we choose $Y_k=X_k$.  Let $\s$ be the Lie algebra spanned by $X$,  $X_1$,  $X_2$,  $Y_1$, $Y_2$,  and let $S$ denote the integral subgroup of $G$ associated to $\s$.
 
The Zariski closure of $\Ad_{\gr}(e^{tX})$ contains an $\R$-split one-parameter subgroup $\{h^t\}$ such that $h^t(X)=X$,  $h^t(X_1)=e^{\lambda_1 t}X_1$, $h^t(Y_1) = e^{\lambda_1 t}Y_1$, $h^t(X_2) = e^{\lambda_2 t}X_2$, and $h^t(Y_2) = e^{\lambda_2 t}Y_2$.  Also,   if $u_1^t = \Ad_{\gr}(e^{tX_1})$, $u_2^t = \Ad_{\gr}(e^{tX_2})$, $v_1^t = \Ad_{\gr}(e^{tY_1})$, and $v_2^t = \Ad_{\gr}(e^{tY_2})$,  the subgroups $\{u_1^t\}$,$\{u_2^t\}$,$\{v_1^t\}$, and $\{v_2^t\}$ are unipotent,  hence Zariski closed in $\GL(\gr)$.  By the Noetherian property of the discompact radical (Prop.  1.4 of \cite{shalom}), $\bar{S}_d$ contains $\{h^t\}$,  $\{u_1^t\}$,$\{u_2^t\}$,$\{v_1^t\}$, and $\{v_2^t\}$.

Let $K = \{x \in M \ | \ N.x \text{ is isotropic}\}$,  where we consider a fixed point an isotropic orbit.  By Theorem \ref{thm:nilpotent},  $K$ is non-empty, compact and $S$-invariant,  so there exists a point $x \in K$ at which the conclusion of Theorem \ref{thm:bfm} are valid.  For $\hx$ in the fiber of $x$,  we note $S^{\hx} \subset \Ad_{\g}(P)$ the algebraic subgroup obtained in the conclusion of the theorem.  By equivariance,  $S^{\hx.p} = pS^{\hx}p^{-1}$.  
Considering the $\R$-split component of a one-parameter subgroup of $S^{\hx}$ which is sent to $h^t$ (by  Theorem \ref{thm:bfm}),  and changing the point $\hx$ appropriately in the fiber,  we obtain a one-parameter subgroup $\{\Ad_{\g}(e^{tA_X})\} < P^{\hx}$ contained in the $\R$-split Cartan subgroup $A$ described in Section \ref{sss:roots} such that $[A_X,\iota_{\hx}(Y)] = \iota_{\hx}([X,Y])$ for all $Y \in \gr$.  Let $U_1,U_2, V_1,V_2 \in \p$ be  such that $\{\Ad_{\g)}(e^{tU_k})\}$ is sent onto $\{\Ad_{\gr}(e^{tX_k})\}$,  for $k=1,2$,  and $\{\Ad_{\g}(e^{tV_k})\}$ is sent onto $\{\Ad_{\gr}(e^{tY_k})\}$,  for $k=1,2$.  

We show now that $X_1$ and $X_2$ both vanish at $x$.  Since $x \in K$ and $X_1,X_2 \in \n$,  a non-trivial linear combination $\alpha_1 X_1 + \alpha_2 X_2$ vanishes at $x$.  It means that $\alpha_1  \iota_{\hx}(X_1) + \alpha_2 \iota_{\hx}(X_2) \in \p$.  Consequently
\begin{align*}
\alpha_1\lambda_1  \iota_{\hx}(X_1) + \alpha_2\lambda_2 \iota_{\hx}(X_2) = [A_X, \alpha_1  \iota_{\hx}(X_1) + \alpha_2 \iota_{\hx}(X_2) ] \in \p.
\end{align*}
Since $\lambda_1\neq \lambda_2$,  we deduce that $\iota_{\hx}(X_1) \in \p$ or $\iota_{\hx}(X_2) \in \p$.    Let us assume by contradiction that $\iota_{\hx}(X_2) \notin \p$.  Necessarily,  $\iota_{\hx}(X) \notin \p$ because if it did,  then
\begin{align*}
\lambda_2\iota_{\hx}(X_2)+\mu_2  \iota_{\hx}(Y_2)= - [U_2,\iota_{\hx}(X)] \in \p \\
-\mu_2\iota_{\hx}(X_2)+\lambda_2  \iota_{\hx}(Y_2)= - [V_2,\iota_{\hx}(X)] \in \p 
\end{align*}
from which we would derive $\iota_{\hx}(X_2) \in \p$,  a contradiction.  Next,  we see that no element in $\Span\{X,X_2\}$ vanishes at $x$,  because for all $\nu \in \R$,  $[A_X, \iota_{\hx} (\nu X + X_2)] = \lambda_2 \iota_{\hx}(X_2)$,  so $\iota_{\hx} (\nu X + X_2) \notin \p$.  Since $X_2(x)$ is tangent to $N.x$,  it is isotropic in $T_xM$,  and using $[\mu_2V_2-\lambda_2U_2,\iota_{\hx}(X)] = (\lambda_2^2 + \mu_2^2) \iota_{\hx}(X_2)$ and Corollary \ref{cor:virtual_isotropy},  we obtain that 
\begin{align*}
0=g_x(X,[\mu_2Y_2-\lambda_2X_2,  X]) = (\lambda_2^2 + \mu_2^2) g_x(X,X_2),
\end{align*}
and we get that $X(x)$ is orthogonal to $X_2(x)$,  and since they are non-collinear,  we must have $g_x(X,X)>0$.  Since $[A_X,\iota_{\hx}(X)]=0$,  $\iota_{\hx}(X)$ must have a component on $\g_{-\alpha}$ because if not,  then it would have components on both $\g_{-\alpha+\beta}$ and $\g_{-\alpha-\beta}$,  which would force $\alpha(A_X)=\beta(A_X)=0$,  a contradiction because $A_X \neq 0$.  So,  since it has a component on $\g_{-\alpha}$,  we deduce $\alpha(A_X)=0$.  

Consider now the components of $\iota_{\hx}(X_1)$ on the restricted root-spaces.  Since $[A_X,\iota_{\hx}(X_1) ]= \lambda_1 \iota_{\hx}(X_1)$ and $\alpha(A_X)=0$,  we deduce that $\lambda_1 = \beta(A_X)$ and $\iota_{\hx}(X_1) = X_{\beta}^1 + X_{\alpha+\beta}^1$.  As for $\iota_{\hx}(X_2)$,  we deduce similarly that $\lambda_2=-\lambda_1 = -\beta(A_X)$ and that its decomposition is $\iota_{\hx}(X_2) = X_{-\alpha-\beta}^2 + X_{-\beta}^2 + X_{\alpha-\beta}^2$.   Now,
\begin{align*}
0 &  = [\iota_{\hx}(X_1),\iota_{\hx}(X_2)]  \text{ since }  \iota_{\hx}(X_1) \in \p \text{ and using Lemma \ref{lem:iota_crochet}} \\
 & = [X_{\beta}^1,X_{-\alpha-\beta}^2] \ \text{mod}.  \p
 \end{align*}
So, $X_{\beta}^1 = 0$ because $\iota_{\hx}(X_2) \notin \p$.  We deduce similarly $[X_{\alpha+\beta}^1,X_{-\alpha-\beta}^2] = 0$,  from which we finally get $\iota_{\hx}(X_1) = 0$,  the contradiction.

Thus,  we have obtained that both $\iota_{\hx}(X_1)$ and $\iota_{\hx}(X_2)$ belong to $\p$.  We conclude with the following.

\begin{lemma}
Let $H,Y,Z \in \so(2,n)$ be such that $H \in \a$,  $Y,Z \in \p$,  and $[H,Y]=\lambda_1Y$,  $[H,Z] = \lambda_2Z$ and $[Y,Z]=0$.  Then,  $Y \in \p^+$ or $Z \in \p^+$.
\end{lemma}

\begin{proof}
We use the same notations as in Section \ref{sss:roots} for the restricted root-spaces of $\so(2,n)$.  Since $\lambda_1 ,  \lambda_2 \neq 0$,  $Y$ and $Z$ have no component on $\a \oplus \m$.  We denote by $Y = Y_{-\beta} + Y_{\beta} + Y^+$ and $Z = Z_{-\beta}+Z_{\beta}+Z^+$ their decomposition according to $\p = \g_{-\beta}\oplus\g_{\beta}\oplus \a \oplus \m \oplus \p^+$.

If $\beta(H)=0$,  then $Y_{-\beta}=0$ and $Y_{\beta}=0$, (similarly for $Z$),  and the lemma is established.  

We are then reduced to assume that $\beta(H)\neq 0$ and $Y_{\beta} \neq 0$ (the other case being symmetric).  Then,  $\beta(H) = \lambda_1$ and $Y_{-\beta}=0$.  Since $\lambda_1 \neq \lambda_2$,  we get $Z_{\beta} = 0$.   Therefore,  the bracket $[Y,Z]$ decomposes into $[Y,Z] = [Y_{\beta},Z_{-\beta}] + [Y_\beta,Z^+] - [Z_{-\beta},Y^+]$,   the first term belonging to $\a \oplus \m$,  the second and the third to $\p^+$.  It follows therefrom that  $[Y_{\beta},Z_{-\beta}]=0$,  and it follows that $Z_{-\beta}=0$ (see for instance \cite{melnick_pecastaing},  Lemma 3.1).  Finally,  $Y \notin \p^+$ implies $Z \in \p^+$ as expected.
\end{proof}
Applying this fact to $A_X, \iota_{\hx}(X_1),\iota_{\hx}(X_2)$,  we obtain that either $X_1$ or $X_2$ has a singularity of order $2$ at $x$,  concluding the proof.
\end{proof}

\subsection{Essential actions of abelian Lie groups of dimension $n$ and $n-1$}
\label{ss:prop_abelian}

This section is devoted to the proof of the following proposition.

\begin{proposition}
\label{prop:abelian_large}
Let $N$ be an abelian Lie group acting faithfully,  conformally and essentially on a compact Lorentzian manifold $(M^n,g)$,  $n \geq 3$. 
\begin{enumerate}
\item If $\dim N = n$,  then $\n$ contains an element $X$ with a singularity of order $2$.
\item If $N$ is isomorphic to $\R^{n-1}$,  then one of the following is true:
\begin{enumerate}
\item There exists $X \in \n$ with a singularity $x$ around which $X$ is linearizable and locally conjugate to a homothetic flow.
\item There exists $X \in \n$ with a singularity $x$ whose holonomy is non-linear and unipotent.
\item There exists an element $n_0 \in N$ with a fixed point at which its holonomy is of light-like type.
\end{enumerate}
\end{enumerate}
In every case,  an open subset of $M$ is conformally flat.
\end{proposition}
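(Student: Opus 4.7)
By Theorem~\ref{thm:nilpotent}(1)(b), essentiality of $N$ yields a point $y_0 \in M$ either fixed by $N$ or lying on a one-dimensional isotropic orbit $N.y_0$. Fix a lift $\hy_0 \in \pi^{-1}(y_0)$; the holonomy map $\iota := \omega_{\hy_0} : \n_{y_0} \hookrightarrow \p$ is an injective abelian Lie-subalgebra embedding, and post-composing with the projection $\p \to \g_0$ gives the linearized isotropy action on $T_{y_0}M \simeq \g/\p$. An element $X \in \n_{y_0}$ has trivial $1$-jet at $y_0$ if and only if $\iota(X) \in \p^+$, so the whole plan is to produce such an $X$, or else a closed-orbit return $\phi_X^{t_0}$ with the same property.

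For part (1), $\dim\n_{y_0}$ is $n$ in the fixed-point case and $n-1$ in the orbit case (in the latter $\iota(\n_{y_0})$ preserves the null line $T_{y_0}(N.y_0)$ inside $\g/\p$). The key algebraic input to establish is that the maximal dimension of an abelian subalgebra of $\co(1,n-1)$ is $n-1$, realized only (up to conjugacy) by $\R \oplus \mathfrak{h}$ where $\R$ is the homothety factor and $\mathfrak{h}\subset\so(1,n-1)$ is the horospherical subalgebra, and that restricting to subalgebras further stabilizing a null line drops the bound to $n-2$. Combined with the openness of nearby generic $N$-orbits (which, via Lemma~2.4 of~\cite{melnick_pecastaing}, constrains the possible $\g_0$-projection), this forces the $\g_0$-projection of $\iota(\n_{y_0})$ to have non-trivial kernel. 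Any $X$ in this kernel satisfies $\iota(X) \in \p^+$, hence has a singularity of order $2$, and Proposition~\ref{prop:holonomy_uniptotent} yields an open conformally flat subset.

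For part (2), $\iota(\n_{y_0})$ is abelian of dimension $n-1$ (fixed point) or $n-2$ (orbit). If the $\g_0$-projection has non-trivial kernel, an $X$ in this kernel has $\iota(X) \in \p^+$: a non-linear unipotent holonomy, case (2b), to which Proposition~\ref{prop:holonomy_uniptotent} applies. Otherwise the projection is injective, and the dimension count above leaves only two configurations. If the image meets the conformal homothety line in $\g_0$ non-trivially, then Remark~\ref{rem:holonomy_linear} shows $\iota(X)$ is conjugate to its $\g_0$-part for a suitable $X$, giving a linearizable homothetic flow, case (2a). Otherwise the image lies inside $\so(1,n-1)$ and, by the dimension bound, is conjugate to the horospherical $\mathfrak{h}$; Proposition~\ref{prop:linearization} then locally conjugates the $N_{y_0}$-action near $y_0$ to the standard horospherical action on a Minkowski patch. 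Compactness of $M$ and this explicit normal form force the isotropic orbit $N.y_0$ to close up, so some $\phi_X^{t_0}$, $t_0 > 0$, fixes $y_0$ with trivial $1$-jet; its holonomy is a null-translation in $\exp(\p^+)$, which is exactly case (2c), and Proposition~\ref{prop:discrete_holonomy} concludes.

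The main obstacle is the case-split in part (2): classifying the injective images $\iota(\n_{y_0}) \hookrightarrow \co(1,n-1)$ of the critical dimensions $n-1$ and $n-2$ up to conjugacy in the stabilizer of the null tangent line, and in the horospherical sub-case extracting both closedness of the one-dimensional isotropic orbit $N.y_0$ and an explicit return time $t_0$ from the local linearization together with compactness of $M$. In part (1) the delicate step is the orbit sub-case, where the raw dimension inequality $n-1 \leq n-1$ is not strict and one needs the openness-of-nearby-orbits input to shrink the effective dimension of the $\g_0$-projection.
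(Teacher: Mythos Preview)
Your overall plan matches the paper's: use essentiality to locate a degenerate orbit, embed the abelian isotropy $\n_{y_0}$ into $\p$, and exploit dimension bounds on abelian subalgebras of $\co(1,n-1)$ to either force an element into $\p^+$ or, in the horospherical case, produce a closed-orbit return map with light-like holonomy. There are, however, three genuine gaps.

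\textbf{Part (1), orbit sub-case.} Your claim that ``restricting to subalgebras further stabilizing a null line drops the bound to $n-2$'' is false as stated: $\R A_\alpha \oplus \g_\beta$ is abelian of dimension $n-1$ in $\co(1,n-1)$ and stabilizes a null line. What is true, and what the paper uses, is that the isotropy fixes a null \emph{vector} (namely $X_0(x)$ for any $X_0 \in \n \setminus \n_x$, since $N$ is abelian), and this is what kills the homothety component. The paper's route is cleaner than your ``openness-of-nearby-orbits'' detour: project $\iota(\n_x)$ to $\so(1,n-1)$ (not $\co(1,n-1)$), whose maximal abelian subalgebra has dimension $n-2 < n-1$ by Lemma~\ref{lem:abelian_so1n}; any element $X$ in the kernel then has $\iota_{\hx}(X) \in \R A_\alpha \oplus \p^+$, and the fixed-vector constraint forces the $A_\alpha$-component to vanish. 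No appeal to Lemma~2.4 of \cite{melnick_pecastaing} is needed.

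\textbf{Part (2), low dimensions.} Your argument implicitly assumes $n \geq 5$. The classification in Lemma~\ref{lem:abelian_so1n} of $(n-2)$-dimensional abelian subalgebras of $\so(1,n-1)$ as horospherical requires $n-2 \geq 3$. For $n=4$, a $2$-dimensional abelian subalgebra of $\so(1,3) \simeq \sl_2(\C)$ may be a Cartan subalgebra rather than a root-space; for $n=3$ the isotropy is one-dimensional and can be hyperbolic. The paper handles $n=3$ and $n=4$ with separate arguments, distinguishing unipotent from hyperbolic isotropy and showing in each case that the return-map holonomy lands in $\exp(\g_{\alpha\pm\beta})$.

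\textbf{Part (2)(c), the return map.} You assert that the return element $n_0 = \phi_X^{t_0}$ ``fixes $y_0$ with trivial $1$-jet'' and has ``null-translation'' holonomy. Neither is automatic. What you know is that $\hol^{\hx}(n_0)$ centralizes $\iota_{\hx}(\n_x) = \g_\beta$, but the centralizer of $\g_\beta$ in $P$ is strictly larger than $\exp(\g_{\alpha+\beta})$. The paper computes this centralizer explicitly and shows that after translating $n_0$ by a suitable element of $(N_x)_0$ (and possibly squaring), and after invoking the fixed-vector constraint once more to eliminate any $\{A^t\}$-contribution, the holonomy is forced into $\exp(\g_{\alpha+\beta})$. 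The hypothesis $N \simeq \R^{n-1}$ enters precisely here: it guarantees $n_0 \notin (N_x)_0$, so the corrected holonomy is non-trivial. Without this step the argument collapses, as the $\S^1 \times U$ example on a Hopf manifold shows.
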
 

\begin{remark}
In the second case,  the fact that $N$ has no non-trivial compact subgroup is necessary.  Recall that the conformal group of a Hopf manifold is isomorphic to $\S^1 \times O(1,n-1)$.  Considering a horospherical subgroup $U < O(1,n-1)$,  the abelian group $\S^1 \times U$ is essential,  of dimension $n-1$,  but does not satisfy (a),  (b) or (c).
\end{remark}

We start with the following result.

\begin{lemma}
\label{lem:abelian_so1n}
For all $n \geq 1$,  the maximal dimension of an abelian subalgebra of $\so(1,n+1)$ is $n$.  For $n \geq 3$,  any $n$-dimensional abelian subalgebra of $\so(1,n+1)$ is conjugate to a restricted root-space,  \textit{i.e.} the Lie algebra of a horospherical subgroup in $O(1,n+1)$.
\end{lemma}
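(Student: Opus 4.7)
The plan is to prove both assertions by induction on $n$, with the lower bound immediate since the nilradical of a minimal parabolic in $\so(1,n+1)$ is abelian of dimension $n$. The base cases $n = 1,2$ are handled directly using $\so(1,2) \simeq \sl_2(\R)$ and $\so(1,3) \simeq \sl_2(\C)_{\R}$.

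For the inductive step, let $\h \subset \so(1,n+1)$ be abelian. Suppose first that $\h$ contains an element $X$ whose Jordan--Chevalley semisimple part $X_s$ is non-zero; since $\ad X_s$ is a polynomial in $\ad X$ without constant term, $\h \subset Z_{\so(1,n+1)}(X_s)$. If $X_s$ is $\R$-hyperbolic, then $Z(X_s) \simeq \a \oplus \so(n)$, whose maximal abelian dimension $1 + \lfloor n/2 \rfloor$ is at most $n-1$ for $n \geq 3$. If $X_s$ is elliptic, I decompose $\R^{1,n+1} = U_0 \oplus V_1$ into $\ker(X_s)$ and its $Q$-orthogonal, with $U_0$ of signature $(1,d)$ and $V_1$ Euclidean of even dimension $2k \geq 2$; then $Z(X_s) \simeq \so(1,d) \oplus Z_{\so(V_1)}(X_s|_{V_1})$, and combining the induction hypothesis with the standard bound $k$ for abelian subalgebras of compact $\so(2k)$ yields $\dim \h \leq (d-1) + k = n-k \leq n-1$ (the small cases $d \leq 1$ being treated directly). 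The mixed case sits inside the hyperbolic one. In all subcases $\dim \h \leq n-1$ whenever $n \geq 3$.

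Suppose next that every element of $\h$ is nilpotent. Engel's theorem applied to the defining representation produces a non-zero $v \in \R^{1,n+1}$ annihilated by $\h$; I distinguish subcases according to $Q(v,v)$. If $v$ is time-like, then $v^{\perp}$ is Euclidean and the image of $\h$ in $\so(v^{\perp})$, consisting of nilpotent antisymmetric endomorphisms, vanishes, forcing $\h = 0$. If $v$ is space-like, then $v^{\perp}$ has signature $(1,n)$ and $\h \hookrightarrow \so(v^{\perp})$ is injective, so induction gives $\dim \h \leq n-1$. If $v$ is null, then $\h$ lies in the parabolic $\p'$ stabilizing $\R v$, conjugate to $\p$ with Levi decomposition $\p' = (\a' \oplus \m') \ltimes \g_{\alpha'}$, $\m' \simeq \so(n)$, $\g_{\alpha'} \simeq \R^n$; any nilpotent $X \in \p'$ acts nilpotently both on the invariant line $\R v$ (on which $\a'$ acts as a dilation) and on $v^{\perp}/\R v$ (on which $\m'$ acts as rotations), so its $\a'$- and $\m'$-components must both vanish, yielding $\h \subset \g_{\alpha'}$ with $\dim \h \leq n$, equality forcing $\h = \g_{\alpha'}$.

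Combining both scenarios gives the upper bound $n$ in general, and for $n \geq 3$ the only configuration realizing dimension $n$ is the null-vector subcase of the nilpotent analysis, whence $\h$ is conjugate to the horospherical subalgebra $\g_\alpha$. I expect the elliptic subcase of the semisimple analysis to be the main book-keeping obstacle, as the interplay between the inductive bound on $\so(1,d)$ and the Cartan bound on compact $\so(2k)$ must be handled carefully; the same delicacy explains why the classification fails at $n = 2$, since $\so(1,3) \simeq \sl_2(\C)_{\R}$ admits both a Cartan and a horospherical maximal abelian subalgebra of dimension $2$ that are not conjugate.
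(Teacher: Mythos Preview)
Your proof is correct, but it takes a genuinely different route from the paper's own argument.

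The paper proceeds geometrically: it views $\so(1,n+1)$ as the algebra of conformal vector fields on the round sphere $\S^n$, and invokes an earlier observation (Remark~\ref{rem:riemannian_abelian}, a Riemannian analogue of Proposition~\ref{prop:locallyFreeOpenDense}) that an abelian conformal action is locally free on an open-dense subset, immediately giving $\dim\a \leq n$.  For the classification when $\dim\a = n$ and $n\geq 3$, the paper shows that the function $\varphi(x)=\sum_k g_x(X_k,X_k)$ must vanish somewhere (else $g/\varphi$ would be an $\a$-invariant metric, forcing $\a$ into a conjugate of $\so(n+1)$, whose maximal abelian subalgebras have dimension $\lfloor (n+1)/2\rfloor < n$).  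Thus $\a$ has a fixed point, yielding an embedding into the parabolic $(\R\oplus\so(n))\ltimes\R^n$; a short linear-algebra argument then forces the image into the $\R^n$ factor.

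Your approach is instead purely algebraic: induction on $n$, Jordan--Chevalley decomposition to split off the case where some element has non-trivial semisimple part (where centralizer bounds do the work), and Engel's theorem in the purely nilpotent case to produce a common annihilated vector, whose causal type drives the remaining case analysis.  Both arguments ultimately land in the same parabolic, but yours arrives there via Engel rather than via a fixed-point argument on the sphere.

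What each approach buys: the paper's proof is shorter and reuses dynamical tools already developed in the article, keeping the argument within its conformal-geometric framework.  Your proof is self-contained and makes no reference to group actions on manifolds; it would transplant unchanged into a purely Lie-theoretic setting, at the cost of a longer case analysis (the elliptic sub-case and the small values of $d$ in particular).  Your closing remark about why $n=2$ escapes the classification is also more transparent in the algebraic picture, since the elliptic branch there produces a genuine two-dimensional Cartan subalgebra.
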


\begin{remark}
In $\so(1,3) \simeq \sl_2(\C)$,  an abelian Lie subalgebra of dimension $2$ is either a restricted root-space,  or a (complex) Cartan subalgebra.
\end{remark}

\begin{proof}
We can see a Lie subalgebra $\a \subset \so(1,n+1)$ as a Lie algebra of conformal vector fields of the Möbius $n$-sphere $\S^n$.   According to Remark \ref{rem:riemannian_abelian},  $\a$ acts locally freely on a open-dense subset of the sphere,  giving the upper bound.  

Suppose now $\dim \a = n \geq 3$ and consider the function $\varphi(x) = \sum g_x(X_k,X_k)$ where $(X_1,\ldots,X_n)$ is a basis of $\a$ and $g$ the round metric.  If $\varphi$ was non-vanishing,  then all elements of $\a$ would be Killing vector fields of $g/\varphi$.  But $\Isom(\S^n,g/\varphi)$ would be a compact subgroup of $\PO(1,n+1) = \Conf(\S^n,[g])$,  hence contained in a conjugate of $O(n+1)$.  This would be a contradiction  because any abelian Lie subalgebra of $\so(n+1)$ is contained in a Cartan subalgebra,  and must have dimension at most $\lfloor \frac{n+1}{2}\rfloor$.  We deduce that $\varphi$ vanishes somewhere,  \textit{i.e.} $\a$ fixes a point in $\S^n$.  

So,  we obtain an embedding $\iota : \a \to (\R \oplus \so(n)) \ltimes \R^n$,  the $\R^n$ factor corresponding to the horospherical group at the fixed point of $\S^n$.  The projection on the  $\so(n)$ factor is abelian. Since an hyperplane of $\iota(\a)$ (at least) has no component on $\R$,  and since $n-1 > \Rk_{\C}(\so(n))$,  a non-zero element $Y \in \iota(\a)$ belongs to the $\R^n$ factor.  It follows easily that the embedding has range in $\so(n) \ltimes \R^n$.  Considering $p_1$ the projection on the $\so(n)$ factor and $V = \iota(\a) \cap \R^n= \ker p_1 \circ \iota$,  matrices of $p_1(\iota(\a))$ vanish on $V$,  so belong to $\so(V^{\perp})$.  Since  $p_1(\iota(\a))$ is abelian of dimension $n-m$ in $\so(n-m)$,  we get $m=n$ as expected.
\end{proof}

\subsubsection{Case $\dim N = n$}  According to Proposition \ref{prop:inessential_abelian},  we can choose $x \in M$ such that $\dim N.x \leq 1$.   Let $N_x$ be the stabilizer of $x$.  According to Lemma \ref{lem:iota_crochet},  for any $\hat{x} \in \pi^{-1}(x)$, the map $X \in \n_x \mapsto \omega_{\hx}(X) \in \p\simeq (\R \oplus \so(1,n-1)) \ltimes \R^n$ is a Lie algebra homomorphism.  

If $N_x = N$,  then the composition with the projection on the $\R \oplus \so(1,n-1)$ factor has a non-trivial kernel according to Lemma \ref{lem:abelian_so1n},  and any non-zero $X$ in this kernel satisfies $\omega_{\hx}(X) \in \p^+$ for every $\hx \in \pi^{-1}(x)$.

If $\dim \n_x = n-1$,  then the composition with the projection on the $\so(1,n-1)$ factor has a non-trivial kernel,  and there exists $X \in \n_x$ such that $\omega_{\hx}(X) \in \R.A_{\alpha} \oplus \p^+$,  where $A_{\alpha} \in \a$ is characterized by $\alpha(A_{\alpha})=1$ and $\beta(A_{\alpha})=0$.  Since $\ad(A_{\alpha})$ acts  homothetically on $\g_{-1}$,  the component on $\R.A_{\alpha}$ has to be trivial since $\d_x \phi_X^t$ fixes $X_0(x) \in T_xM$ for any $X_0 \notin \n_x$.   Thus,  $\omega_{\hx}(X) \in \p^+$,  which establishes point (1) in the proposition.  

\subsubsection{Case $N \simeq \R^{n-1}$ with $n \geq 5$} If $N$ has a fixed point $x$,  then what precedes shows that there exists $X \in \n$ such that $\omega_{\hx}(X) = \lambda A_{\alpha} + X_1 \in \R.A_{\alpha} \oplus \p^+$.  If $\lambda \neq 0$,  then $\Ad(p)\omega_{\hx}(X) = \lambda A_{\alpha}$ where $p=e^{\frac{1}{\lambda}X_1}$,  so $\omega_{\hx.p^{-1}}(X) \in \R.A_{\alpha}$,  showing that $X$ is locally conjugate to $\ad(A_{\alpha})|_{\g_{-1}}$ by Proposition \ref{prop:linearization}
and we are in situation (a).  If $\lambda = 0$,  then we in situation (b).  

Consequently,  we can assume that $N$ has no fixed point, and that for all $x \in M$ such that $N.x$ is $1$-dimensional and light-like,  the embedding $\iota_{\hx} : X \in \n_x \mapsto \omega_{\hx}(X) \in \p$ is such that $p \circ \iota_{\hx}$ is injective,  where $p : (\R \oplus \so(1,n-1)) \ltimes \R^n \to \so(1,n-1)$ denotes the projection (if not,  we are in situation (b)).  Furthermore,  we can assume that for all $X \in \n_x$,  if $\iota_{\hx}(X) \in \p$ is unipotent,  then it is conjugate in $P$ to an element of $\g_0$.

Recall that $K$ refers to the compact subset of $M$ where $N$-orbits are totally isotropic.  We note $\n_x$ the Lie algebra of the stabilizer in $N$ of a point $x$.  The following Lemma will be reused later.

\begin{lemma}
\label{lem:holonomy_gbeta}
Let $x \in K$.  If $n \geq 5$,  and if we are note in cases (a) and (b) of Proposition \ref{prop:abelian_large},  then there exists $\hx \in \pi^{-1}(x)$ such that $\iota_{\hx}(\n_x) = \g_{\beta}$.  
\end{lemma}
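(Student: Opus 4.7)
The plan is to conjugate $\hx$ through successive elements of $P$ until $\iota_{\hx}(\n_x)$ sits exactly inside $\g_\beta$. First I would note that $\dim \n_x = n-2$, and that since $N$ is abelian each $X \in \n_x$ has vertical lift at $\hx$, so Lemma \ref{lem:iota_crochet} gives that $\iota_{\hx} : \n_x \to \p$ is an injective Lie algebra homomorphism onto an abelian subalgebra. Under the injectivity of $p \circ \iota_{\hx}$, the image $p(\iota_{\hx}(\n_x))$ is an $(n-2)$-dimensional abelian subalgebra of $\so(1,n-1)$. Since $n-2 \geq 3$, Lemma \ref{lem:abelian_so1n} shows it is conjugate to a horospherical subalgebra; because $\g_\beta$ and $\g_{-\beta}$ are exchanged by a Weyl reflection sitting in $O(1,n-1) \subset G_0$, I may replace $\hx$ by some $\hx \cdot g_0^{-1}$, $g_0 \in G_0$, so that $p(\iota_{\hx}(\n_x)) = \g_\beta$. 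Writing $\iota_{\hx}(X) = \lambda(X) A_\alpha + y(X) + z'(X) + z_0(X) + z(X)$ according to the root decomposition, $y : \n_x \to \g_\beta$ becomes a linear isomorphism.

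Next I would prove that $\lambda \equiv 0$. Since $N$ is abelian, for any $X \in \n_x$ and any $Y \in \n$ the identity $\d_x \phi_X^t \cdot Y(x) = Y(x)$ holds, so $\d_x \phi_X^t$ fixes the $1$-dimensional null subspace $T_x(N.x)$. Through the identification $\varphi_{\hx}$ this differential equals $\bar{\Ad}(e^{t\iota_{\hx}(X)})$, and since $[\p^+, \g_{-1}] \subset \p$ only the $\g_0$-component of $\iota_{\hx}(X)$ contributes: explicitly, on $\g_{-1}$ one has $\bar{\Ad}(e^{t\iota_{\hx}(X)}) = e^{-t\lambda(X)} e^{t\ad y(X)}$. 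As $y(X) \in \g_\beta$ acts nilpotently on $\g_{-1}$ (one checks $(\ad y)^3 = 0$ from the root diagram), its only eigenvalue is $0$, and fixing a nonzero vector forces $\lambda(X) = 0$.

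With $\lambda \equiv 0$, the element $\iota_{\hx}(X) = y(X) + n^+(X)$ is unipotent, so the assumption that case (b) fails makes it $P$-conjugate to an element of $\g_0$. A direct calculation using $[\p^+, \p^+] = 0$ gives $\Ad(e^u)(y + n^+) = y + n^+ + [u,y]$ for $u \in \p^+$, so the conjugacy condition becomes $n^+(X) \in \ad(y(X))(\p^+) \subset \g_\alpha \oplus \g_{\alpha+\beta}$. This forces $z'(X) = 0$ and $z_0(X) \in \R \cdot [y(X), e_{\alpha-\beta}]$ for a fixed nonzero $e_{\alpha-\beta} \in \g_{\alpha-\beta}$; I write $z_0(X) = c_X [y(X), e_{\alpha-\beta}]$. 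The abelianity relation $[\iota_{\hx}(X_1), \iota_{\hx}(X_2)] = 0$ combined with the Jacobi identity and $[\g_\beta, \g_\beta]=0$ yields the symmetry $[y_1, [y_2, e_{\alpha-\beta}]] = [y_2, [y_1, e_{\alpha-\beta}]]$ and forces $(c_{X_1} - c_{X_2})[y_2, [y_1, e_{\alpha-\beta}]] = 0$; by non-degeneracy of the root-space pairings $\g_\beta \times \g_{\alpha-\beta} \to \g_\alpha$ and $\g_\beta \times \g_\alpha \to \g_{\alpha+\beta}$, all $c_X$ coincide with a common constant $c$.

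To conclude, I would conjugate uniformly by $e^{c\,e_{\alpha-\beta}}$ to cancel the $z_0$-component (using $[e_{\alpha-\beta}, y] = -[y, e_{\alpha-\beta}]$, with higher-order terms vanishing by root-space reasons), leaving $\iota_{\hx}(X) = y(X) + d(X) e_{\alpha+\beta}$ with $d : \n_x \to \R$ linear. The non-degeneracy of the pairing $\g_\alpha \times \g_\beta \to \g_{\alpha+\beta}$ then produces $u_0 \in \g_\alpha$ with $[u_0, y(X)] = d(X) e_{\alpha+\beta}$ for all $X$, and a final conjugation by $e^{-u_0}$ yields $\iota_{\hx}(\n_x) = \g_\beta$. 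The delicate point is carrying out all these conjugations uniformly on $\n_x$: this rests on the constancy of $c$ deduced from abelianity plus Jacobi, and on realizing the functional $d$ through a single element $u_0$, both of which reduce to the non-degeneracy of root-space bracket pairings in $\so(2,n)$.
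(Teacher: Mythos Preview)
Your proof is correct and follows essentially the same route as the paper: conjugate by an element of $G_0$ to make $p\circ\iota_{\hx}(\n_x)=\g_\beta$, kill the $A_\alpha$-component via the fixed isotropic direction, eliminate the $\g_{\alpha-\beta}$-component, and then perform a uniform $P^+$-conjugation to land in $\g_\beta$. The only tactical difference is that the paper kills the $\g_{\alpha-\beta}$-component directly from abelianity (pick $Y$ with $Y_{\alpha-\beta}=0$ and read off $[X_{\alpha-\beta},Y_\beta]=0$) and then obtains constancy of $c$ by the simpler observation that a linear map fixing every line is a homothety, whereas you deduce $z'(X)=0$ from the ``not case (b)'' hypothesis and recover constancy of $c$ via abelianity and Jacobi; both arguments are valid.
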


\begin{proof}
If $p : \p \to \so(1,n-1)$ denotes the projection,  since $\dim \n_x = n-2$,  Lemma \ref{lem:abelian_so1n} implies that $p \circ \iota_{\hx}(\n_x) \subset \so(1,n-1)$ is a restricted root-space.  So,  replacing $\hx$ by some $\hx .  g_0$,  with $g_0 \in G_0$,  we can assume $p \circ \iota_{\hx}(\n_x) = \g_{\beta}$.

In fact,  no element of $\iota_{\hx}(\n_x)$ has a component on the grading element $A_{\alpha}$.  Indeed,  every one-parameter subgroup $\{\phi_X^t\} \subset N_x$ fixes an isotropic vector in $T_xM$ which corresponds to the tangent line of the $N$-orbit of $x$.  Given the form of $\iota_{\hx}(X)$,   $\d_x\phi_X^t = e^{\lambda t} u^t$,  where $\{u^t\}$ is a unipotent one-parameter subgroup of $O(T_xM)$ and $t$ the component of $\iota_{\hx}(X)$ on $A_{\alpha}$.  Hence,  $\lambda =0$.

Thus,  $\iota_{\hx}(\n_x)$ is included in $\g_{\beta} \oplus \g_{\alpha-\beta} \oplus \g_{\alpha} \oplus \g_{\alpha+\beta}$.  Next,  we show that it is in fact in $\g_{\beta}  \oplus \g_{\alpha} \oplus \g_{\alpha+\beta}$.  Let $\iota_{\hx}(X) = X_{\beta}+ X_{\alpha-\beta} + X_{\alpha} + X_{\alpha+\beta}$ be the decomposition for some element $X \in \n_x$.  Choose $Y \in \n_x$ non-zero and such that $Y$ has no component on $\g_{\alpha-\beta}$ (recall that $\dim \g_{\alpha-\beta} = 1$),  and let $Y_{\beta}+ Y_{\alpha} + Y_{\alpha+\beta}$ be the components of $\iota_{\hx}(Y)$.  Expressing that the bracket is zero,  we get $[X_{\alpha-\beta},Y_{\beta}] = 0$,  and since $Y_{\beta} \neq 0$,  we obtain $X_{\alpha-\beta}=0$.

Therefore,  every $X \in \n_x$ has a unipotent holonomy,  which is then conjugate to an element of $\g_0$ by assumption.   The following shows that the conjugacy is realized by a same element of $P^+$,  \textit{i.e.} that we have a simultaneous linearization of the isotropy.  This is a slight variation of Proposition 5.2 of \cite{melnick_pecastaing}.  Since the hypothesis are different,  we give a proof in this particular setting.

\begin{lemma}
\begin{enumerate}
\item An element $X_{\beta}+ X_{\alpha} + X_{\alpha+\beta} \in \p$ with $X_{\beta} \neq 0$ is conjugate to an element of $\g_0$ if and only if there exists $Z \in \g_{\alpha-\beta}$ such that $X_{\alpha} = [Z,X_{\beta}]$.  
\item If $\h \subset \g_{\beta}\oplus \g_{\alpha} \oplus \g_{\alpha+\beta}$ is a linear subspace all of whose elements are conjugate to an element of $\g_0$,  then there exists $p \in \exp(\g_{\alpha-\beta} \oplus \g_{\alpha})$ such that $\Ad(p)\h \subset \g_{\beta}$.
\end{enumerate}
\end{lemma}

\begin{proof}
We identify $P$ as the subgroup $\Ad(G_0)|_{\p^+} \ltimes \p^+$ of the affine group of $\p^+$,  and its Lie algebra correspondingly.  A one-parameter subgroup $\{e^{tX}\} \subset P$ is conjugate to a one-parameter subgroup of $G_0$ if and only if its affine action on $\p^+$ has a fixed point,  or equivalently,  writing $X = (X_0, X_1) \in \g_0 \ltimes \p^+$,  if there exists $Y_1 \in \p^+$ such that $X_1 = [X_0,Y_1]$ and then $\Ad(e^{Y_1}) X = X - [X_0,Y_1] = X_0$.

Recall that if $Z \in \g_{\alpha-\beta}$ is non-zero,  then $\ad(Z)|_{\g_{\beta}} : \g_{\beta} \to \g_{\alpha}$ is a linear isomorphism and that the bracket $\g_{\alpha} \times \g_{\beta} \to \g_{\alpha+\beta}$ is non-degenerate in the sense that it identifies $\g_{\alpha}$ with $(\g_{\beta})^*$ and conversely.

For the first point,  if $X_{\beta}+ X_{\alpha} + X_{\alpha+\beta}$ is conjugate to an element of $\g_0$,  then there exists $Y_1 = Y_{\alpha-\beta} + Y_{\alpha} + Y_{\alpha + \beta}$ such that $X_{\alpha} + X_{\alpha+\beta} = [X_{\beta},Y_1] = [X_{\beta},Y_{\alpha-\beta} ] + [X_{\beta},Y_{\alpha}]$,  so $Z:= -Y_{\alpha-\beta}$ is convenient.  Conversely,  if there exists $Z \in \g_{\alpha-\beta}$ such that $X_{\alpha} = [Z,X_{\beta}]$,  then if $Z' \in \g_{\alpha}$ is such that $[Z',X_{\beta}] = X_{\alpha+\beta}$ ,  then $[X_{\beta},-Z-Z'] = X_{\alpha}+X_{\alpha+\beta}$,  showing that $\Ad(e^{-Z-Z'}) (X_{\beta}+ X_{\alpha} + X_{\alpha+\beta}) = X_{\beta}$.

For the second point,  the linear projection to $\g_{\beta}$ is injective in restriction to $\h$.   We deduce the existence of a linear subspace $V \subset \g_{\beta}$ and linear maps $\varphi_{\alpha} : V \to \g_{\alpha}$ and $\varphi_{\alpha + \beta} : V \to \g_{\alpha+\beta}$ such that $\h = \{X + \varphi_{\alpha}(X) + \varphi_{\alpha+\beta}(X),  \ X \in V\}$. Our hypothesis is that for a given non-zero $Z_{\alpha-\beta} \in \g_{\alpha-\beta}$,  $\varphi_{\alpha}(X)$ is collinear to $[Z_{\alpha-\beta},X]$ for every $X \in V$.  Let $f : \g_{\alpha} \to \g_{\beta}$ be the  inverse of $(\ad Z_{\alpha-\beta})|_{\g_{\beta}}$.  Then,  $f \circ \varphi_{\alpha} : V \to V$ is a linear map fixing all the lines,  hence an homothety.  Replacing $Z_{\alpha-\beta}$ by $\lambda Z_{\alpha-\beta}$ if necessary,  we must have $\varphi_{\alpha}(X) = [Z_{\alpha-\beta},X]$ for all $X \in V$.  Since the bracket $\g_{\alpha} \times \g_{\beta} \to \g_{\alpha+\beta}$ is non-degenerate,  there exists $Z_{\alpha}\in \g_{\alpha}$ such that $\varphi_{\alpha+\beta}(X) = [Z_{\alpha},X]$ for all $X \in V$.  Finally,  we obtain $\Ad(e^{-Z_{\alpha-\beta}-Z_{\alpha}})\h =V \subset \g_{\beta}$ as announced.
\end{proof}
Replacing now $\hx$ by $\hx.p^{-1}$,  we obtain $\iota_{\hx}(\n_x) \subset \g_{\beta}$,  which concludes the proof of the Lemma since $\dim \n_x = \dim \g_{\beta} = n-2$.
\end{proof}

Combined with Proposition \ref{prop:linearization},  Lemma \ref{lem:holonomy_gbeta} implies that the action of the identity component of the stabilizer $(N_x)_0$ is locally linearizable at the neighborhood of $x$,  and explicitly the local conjugacy is given by
\begin{align*}
\exp_{\hx} : \mathcal{U} \subset \g_{-1} \to U \subset M,
\end{align*}
where $\g_{-1} = \g_{-\alpha+\beta}\oplus \g_{-\alpha} \oplus \g_{-\alpha-\beta}$,  $\mathcal{U}$ is a neighborhood of $0$ and $U$ a neighborhood of $x$.  For all $X \in \n_x$,  we have $(\exp_{\hx})^* X = \ad(\omega_{\hx}(X))|_{\g_{-1}}$ over $\mathcal{U}$.  Therefore, because $\g_{-\alpha + \beta}$ is the subspace of $\g_{-1}$ centralized by $\g_{\beta}$,  the set of fixed points of $(N_x)_0$ in $U$ is a $1$-dimensional submanifold, parametrized by $\pi(\exp_{\hx}(tX_{-\alpha+\beta}))$ for $t$ in a neighborhood of $0$.  Necessarily,  this $1$-dimensional fixed points set coincides locally with the $N$-orbit of $x$.  We get the following consequence: 

\begin{lemma}
\label{lem:Norbit_closed}
Any $1$-dimensional,  light-like $N$-orbit is closed.
\end{lemma}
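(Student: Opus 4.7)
The plan is to argue by contradiction using the local linearization around any accumulation point. Suppose that a $1$-dimensional, light-like orbit $\mathcal{O} = N.x$ is not closed, and pick $y \in \overline{\mathcal{O}} \setminus \mathcal{O}$. First I would verify that $y$ satisfies the same hypotheses as $x$, namely $y \in K$ and $\dim N.y = 1$ with $N.y$ light-like. Since $N$ is abelian, for every $n \in N$ one has $n.y \in \overline{N.x}$, so $N.y \subset \overline{N.x}$; in particular $\dim N.y \leq 1$. The possibility $\dim N.y = 0$ is excluded by the standing assumption that $N$ has no fixed point (which we are entitled to by (b) of the proposition), so $\dim N.y = 1$. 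Isotropy of $T_y(N.y)$ follows by passing to the limit along a converging family of tangent vectors to the orbits $N.x_k$ for $x_k \to y$ in $\mathcal{O}$, using that $\mathcal{O} \subset K$ and that $K$ is closed.

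Next, since $N$ is abelian the stabilizer is constant along orbits, so $N_{x'} = N_x$ for every $x' \in N.x$. The stabilizer being upper semi-continuous in the point, this gives $N_x \subset N_y$. Both $\n_x$ and $\n_y$ have dimension $n-2$, therefore $(N_x)_0 = (N_y)_0$.

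Now I apply Lemma \ref{lem:holonomy_gbeta} to $y$: there exists $\hy \in \pi^{-1}(y)$ with $\iota_{\hy}(\n_y) = \g_{\beta}$, and consequently the local fixed-point set of $(N_y)_0$ in some neighborhood $V$ of $y$ is a $1$-dimensional submanifold which coincides with $V \cap N.y$. Pick any sequence $x_k = n_k.x \in N.x$ with $x_k \to y$. For $k$ large, $x_k \in V$, and since $x_k$ is fixed by $(N_x)_0 = (N_y)_0$, it must belong to the local fixed-point set, i.e.\ to $V \cap N.y$. Thus $N.x \cap N.y \neq \emptyset$, and because $N$-orbits partition $M$ we obtain $N.x = N.y$, contradicting $y \notin \mathcal{O}$.

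The only genuinely delicate step is the first one, namely making sure the limit orbit $N.y$ is still of dimension one and light-like. Once one grants the assumption ruling out fixed points, this follows from the closedness of $K$ and the equality of stabilizers along the orbit; the subsequent identification $(N_x)_0 = (N_y)_0$ is what makes the local normal form obtained in Lemma \ref{lem:holonomy_gbeta} applicable to both orbits at once, and from there the contradiction is immediate.
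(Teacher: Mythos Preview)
Your argument is correct and follows essentially the same route as the paper: both use that $N_x \subset N_y$ (hence $(N_x)_0 = (N_y)_0$, since $N$ has no fixed point) together with the local linearization at $y$ derived from Lemma~\ref{lem:holonomy_gbeta} to identify the local fixed-point set of $(N_y)_0$ with $N.y \cap U$, forcing nearby points of $N.x$ into $N.y$. The paper phrases this directly (showing any $y \in \overline{N.x}$ lies in $N.x$) rather than by contradiction, and is terser about why $y \in K$, but the substance is identical.
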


\begin{proof}
Let $x$ be a point with a $1$-dimensional,  light-like orbit.  Let $y \in \bar{N.x}$ be a point in the orbit closure.  Since $N$ is abelian,  $N_x \subset N_y$,  so that $\n_x =\n_y$ since we assumed that $N$ has no fixed point.  Let $U$ be a neighborhood of $y$ over which $(N_y)_0$ is locally linearizable.   We have observed that $\{z \in U \ | \ \forall g \in (N_y)_0,  \ g.z =z\} = N.y  \cap U$.  Let now $g_0 \in N$ be such that $g_0.x \in U$.  Then,  $g_0.x$ is fixed by $N_x$,  which contains $(N_y)_0$,  so $g_0.x \in N.y$.  Therefore,  $y \in N.x$,  showing that $N.x$ is closed as we claimed.
\end{proof}

Therefore,  for every $X \in \n$ and every $x \in K$,  if $X(x) \neq 0$, then there exists $t_0 > 0$ such that $e^{t_0X} \in N_x$.  In general,  it is possible that $e^{t_0X} \in (N_x)_0$ (for instance if $\phi_X^t$ is a free $\S^1$-action).  This is not possible in our situation since $N$ is isomorphic to $\R^{n-1}$.

\begin{lemma}
The centralizer $\mathcal{Z}_{O(1,n-1)}(\g_{\beta})$ is $\mathcal{Z}_{O(1,n-1) } \times \exp(\g_{\beta})$.
\end{lemma}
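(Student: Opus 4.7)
The plan is to reduce the claim to a short structural computation inside the minimal parabolic of $O(1,n-1)$. Since centralizing the Lie algebra $\g_\beta$ is the same as centralizing the connected abelian subgroup $U := \exp(\g_\beta)$, I would first identify the normalizer $N_{O(1,n-1)}(U)$. The subgroup $U$ acts on the standard representation $\R^{1,n-1}$ with fixed-vector set exactly the null line $\ell$ whose stabilizer is the parabolic $P_\ell$ having $U$ as unipotent radical; hence any $g$ normalizing $U$ must stabilize $\ell$, while conversely every element of $P_\ell$ normalizes $U$. Thus $N_{O(1,n-1)}(U) = P_\ell$, and in particular $\mathcal{Z}_{O(1,n-1)}(U) \subset P_\ell$.

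Next, I would use the Levi decomposition $P_\ell = L \ltimes U$ with $L \simeq \R^\times \times O(n-2)$ acting on the filtration $\ell \subset \ell^\perp \subset \R^{1,n-1}$. Because $U$ is abelian, $\mathcal{Z}_{P_\ell}(U) = \mathcal{Z}_L(U) \ltimes U$, so the problem reduces to computing $\mathcal{Z}_L(U)$. A routine calculation in a null frame shows that the $\Ad$-action of $L$ on $\g_\beta \simeq \R^{n-2}$ is the product representation $(\lambda, R)\cdot v = \lambda R v$; its kernel consists of those $(\lambda, R)$ with $\lambda R = \Id_{n-2}$, which forces $\lambda = \pm 1$ and $R = \pm\Id$ with matched signs. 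Inside $O(1,n-1)$ these two elements are precisely $\Id$ and the antipodal map $-\Id$, i.e.\ the center $\mathcal{Z}(O(1,n-1)) = \{\pm \Id\}$. Combining gives
\begin{align*}
\mathcal{Z}_{O(1,n-1)}(\g_\beta) \;=\; \{\pm \Id\} \cdot U \;=\; \mathcal{Z}(O(1,n-1)) \times \exp(\g_\beta),
\end{align*}
the product being direct since the unipotent subgroup $U$ intersects $\{\pm \Id\}$ trivially.

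I do not foresee any serious obstacle: the entire argument is internal to the rank-one group $O(1,n-1)$, and the only slightly non-trivial ingredient, the explicit form of the $L$-action on $\g_\beta$, is a one-line verification from the matrix model already fixed earlier in the paper. The one point worth double-checking is that the normalizer $N_{O(1,n-1)}(U)$ really equals $P_\ell$ and not some extension (e.g.\ a Weyl-group element swapping $\g_\beta$ with $\g_{-\beta}$); this is ruled out because such an element would necessarily exchange the unique null line fixed by $U$ with a different null line, hence cannot normalize $U$.
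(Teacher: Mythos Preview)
Your proposal is correct and follows essentially the same route as the paper: first confine the centralizer to the parabolic stabilizing the distinguished null line, then compute the kernel of the Levi action $(\lambda,R)\mapsto \lambda R$ (the paper writes it as $a^{-1}A$) on $\g_\beta\simeq\R^{n-2}$. The only cosmetic difference is that the paper phrases the first step via the conformal action of $O(1,n-1)$ on $\S^{n-2}$ (the unique fixed point of $\g_\beta$) rather than the linear action on $\R^{1,n-1}$, but these are the same parabolic reduction.
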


\begin{proof}
Consider the standard conformal action of $O(1,n-1)$ on the round sphere $\S^{n-2}$.  Let $x \in \S^{n-2}$ be the (unique) point fixed by all of $\g_{\beta}$. Then,  for $g$ in the centralizer,  we must have $g.x=x$ by uniqueness.  Therefore,  $g \in Q < O(1,n-1)$,  the  parabolic subgroup fixing $x$.  The group $Q$ decomposes into $Q = (\R^* \times O(n-2)) \ltimes \R^{n-2}$,  where the $\R^{n-2}$ factor parametrizes $\exp(\g_{\beta}) < Q$.   As the latter clearly centralizes $\g_{\beta}$,  we can assume that $g$ has no component on the $\R^{n-2}$ factor.  Let $(a,A) \in \R^* \times O(n-2)$ be corresponding to the other factors of $g$, then  the adjoint action of $g$  on some $X_{\beta}$ in the root-space, which is parametrized by $u \in \R^{n-2}$,  is $a^{-1} Au$.  We get $Au = au$,  for all $u \in \R^{n-2}$,  showing that $A$ is a scalar matrix,  and finally $g = \pm \id$.
\end{proof}

\begin{corollary}
If $x \in K$,  then there exists $X \in \n$,  $\hx \in \pi^{-1}(x)$ and $t_0 > 0$ such that $X(x) \neq 0$,  $n_0 := e^{t_0X} \in N_x$,  and $\hol^{\hx}(n_0)$ is a light-like translation in $P^+$.  
\end{corollary}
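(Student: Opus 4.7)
Since $x\in K$ and $N$ has no fixed point, $\dim N.x = 1$ and the orbit is light-like; by Lemma~\ref{lem:Norbit_closed} it is closed, and being contained in the compact set $K$, it is diffeomorphic to $\S^1$. Choose $X\in\n$ transverse to $\n_x$. The flow $\phi_X$ is then periodic on $N.x$ with some period $t_0>0$, so $n_0:=e^{t_0X}\in N_x$; because $N\simeq\R^{n-1}$ and $X\notin\n_x$, the line $\R X$ meets the hyperplane $\n_x$ only at $0$, and $n_0\notin(N_x)_0$. Fix $\hx\in\pi^{-1}(x)$ with $\iota_{\hx}(\n_x)=\g_\beta$ (Lemma~\ref{lem:holonomy_gbeta}), and set $W:=\omega_{\hx}(\hat X)\in\g$, $p:=\hol^{\hx}(n_0)\in P$.

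The analysis rests on three constraints. First, since $N$ is abelian and $\hat Y_{\hx}$ is vertical for every $Y\in\n_x$, Lemma~\ref{lem:iota_crochet} gives $[\iota_{\hx}(Y),W]=0$, so $W\in\mathcal{Z}_\g(\g_\beta)=\R A_\alpha\oplus\g_{-\alpha+\beta}\oplus\g_\beta\oplus\g_{\alpha+\beta}$ (a direct root-space computation); moreover $W_{-1}\in\g_{-\alpha+\beta}$, which is the unique $\g_\beta$-fixed line in $\g_{-1}$, and $W_{-1}\neq 0$ since $X(x)\neq 0$. Secondly, the identity $\hat n_{0*}\hat X=\hat X$ (because $N$ is abelian) combined with $\hat n_0^*\omega=\omega$ and $R_p^*\omega=\Ad(p^{-1})\omega$ yields $\Ad(p)W=W$. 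Thirdly, $p$ centralizes $\hol^{\hx}((N_x)_0)=\exp(\g_\beta)$ (since $N_x$ is abelian and $\hol^{\hx}$ restricted to it is a group homomorphism), while $p\notin\exp(\g_\beta)$ by injectivity of the faithful embedding $\hol^{\hx}:N_x\hookrightarrow P$.

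The Lie algebra centralizer of $\g_\beta$ in $\p$ being $\R A_\alpha\oplus\g_\beta\oplus\g_{\alpha+\beta}$, I may write $p=\exp(\lambda A_\alpha)\exp(Y_\beta+Y_{\alpha+\beta})$ (using $[\g_\beta,\g_{\alpha+\beta}]=0$). Since $[\g_\beta,\g_{-\alpha+\beta}]=[\g_{\alpha+\beta},\g_{-\alpha+\beta}]=0$, the factor $\exp(Y_\beta+Y_{\alpha+\beta})$ fixes $W_{-1}$, while $\exp(\lambda A_\alpha)$ scales it by $e^{-\lambda}$. The constraint $\Ad(p)W=W$ then forces $\lambda=0$, and the condition $p\notin\exp(\g_\beta)$ forces $Y_{\alpha+\beta}\neq 0$.

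To absorb the remaining $\g_\beta$-component, I modify $X$ within $X+\n_x$: since $\iota_{\hx}:\n_x\to\g_\beta$ is a linear bijection, pick $Y\in\n_x$ with $t_0\,\iota_{\hx}(Y)=-Y_\beta$. Setting $X':=X+Y$ and $n_0':=e^{t_0X'}=n_0\cdot e^{t_0Y}$, one has $X'(x)=X(x)\neq 0$ and $n_0'\in N_x\setminus(N_x)_0$; since $n_0$ and $e^{t_0Y}$ commute and both fix $x$, their holonomies at $\hx$ multiply, giving
\[
\hol^{\hx}(n_0')=\exp(Y_\beta+Y_{\alpha+\beta})\cdot\exp(-Y_\beta)=\exp(Y_{\alpha+\beta})
\]
by $[\g_\beta,\g_{\alpha+\beta}]=0$. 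Relabeling $X'\to X$, $n_0'\to n_0$ yields the claimed non-zero light-like translation in $P^+$. The principal obstacle is the centralizer analysis in the third step; the final isotropy correction is the natural cohomological device absorbing the residual $\g_\beta$-component.
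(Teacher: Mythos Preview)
Your argument follows essentially the same strategy as the paper's: exploit that $p=\hol^{\hx}(n_0)$ centralizes $\exp(\g_\beta)$, eliminate the homothetic $A_\alpha$-component, absorb the residual $\g_\beta$-part by shifting $X$ inside $X+\n_x$, and land in $\exp(\g_{\alpha+\beta})$.  Your use of the relation $\Ad(p)W=W$ together with $0\neq W_{-1}\in\g_{-\alpha+\beta}$ to kill the $A_\alpha$-factor is a clean algebraic substitute for the paper's more geometric observation that $d_xn_0$ must fix the tangent to the periodic orbit $N.x$.

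There is one small gap.  You pass from the \emph{Lie algebra} centralizer $\mathcal{Z}_{\p}(\g_\beta)=\R A_\alpha\oplus\g_\beta\oplus\g_{\alpha+\beta}$ directly to $p=\exp(\lambda A_\alpha)\exp(Y_\beta+Y_{\alpha+\beta})$, but the \emph{group} centralizer $\mathcal{Z}_P(\g_\beta)$ has a second connected component coming from $-\id\in O(1,n-1)\subset G_0$; this is precisely the content of the lemma immediately preceding the corollary in the paper.  The paper handles it by replacing $t_0$ with $2t_0$.  Alternatively, your own constraint $\Ad(p)W_{-1}=W_{-1}$ already excludes this component, since $-\id$ acts as $-1$ on $\g_{-1}$ and hence on $W_{-1}$; invoking that constraint \emph{before} writing $p$ in exponential form would close the gap without any squaring.
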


\begin{proof}
Let $X \in \n$ be such that $X(x) \neq 0$ and let $t_0>0$ be such that $n_0 = e^{t_0X} \in N_x$.  Let $\hx \in \pi^{-1}(x)$ be such that $\omega_{\hx}(\n_x) = \g_{\beta}$. 

Recall that the map $\hol^{\hx} : f \in \Conf(M,g)_x \mapsto \hol^{\hx}(f) \in P$  is an injective Lie group homomorphism,  where $\hol^{\hx}$ refers to the unique $p \in P$ such that $\hat{f}(\hx) = \hx.p^{-1}$.  

So,  $\hol^{\hx} (n_0) \in P$ is an element centralizing $\hol^{\hx}((N_x)_0) = \exp(\g_{\beta})$.  Since $\hol^{\hx}$ is injective and $n_0 \notin (N_x)_0$,  $\hol^{\hx}(n_0) \notin \exp(\g_{\beta})$.   Using the decomposition $P = G_0 \ltimes P^+$,  and replacing $t_0$ by $2t_0$ if necessary,  we obtain that the $G_0$-part of $\hol^{\hx} (n_0)$ belongs to $\{A^t\} \times \exp(\g_{\beta})$,  say $(e^{s_0 A},e^{X_{\beta}})$ for $X_{\beta} \in \g_{\beta}$.  Thus,  picking $Y \in \n_x$ such that $\hol^{\hx}(e^{t_0}Y) = e^{X_{\beta}}$,  and replacing $X$ by $X-Y$,  we may assume that $\hol^{\hx}(n_0) \in \{A^t\} \ltimes P^+$ and is a non-trivial element.  Necessarily,  the factor on $A^t$ must be the identity because if not,  it would mean that the differential $\d_x n_0$ is a non-trivial homothety,  whereas it fixes pointwise the light-like periodic orbit $N.x$.  Finally,  $\hol^{\hx}(n_0)$ is a non-trivial element of $P^+$ which centralizes $\g_{\beta}$,  so we conclude $\hol^{\hx}(n_0) \in \exp(\g_{\alpha + \beta})$.
\end{proof}

This concludes the proof of Proposition \ref{prop:abelian_large} in the case $n \geq 5$.

\subsubsection{Case of $\R^{n-1}$,  $n=3$}

In this situation,  $\n_x$ is a line.  If $\{\phi^t\}$ is a one-parameter subgroup of $N_x$,  then $\d_x \phi^t$ fixes an isotropic vector in $T_xM$ given by the orbit $N.x$.  As a consequence,  $\d_x \phi^t = e^{\lambda t} g^t$,  where $\{g^t\} \subset \SO(T_xM,g_x) \simeq \SO(1,2)$ is in the parabolic subgroup fixing this isotropic line.  It implies that either $\lambda=0$ and $\d_x \phi^t = g^t$ is unipotent,  or $\{g^t\}$ is hyperbolic and $\lambda \neq 0$.  

\begin{lemma}
\label{lem:casen=3}
For any $x \in K$,  the identity component $(N_x)_0 = \{\phi_Y^t\}_{t \in \R}$ is locally linearizable near $x$,  and $Y$ is locally conjugate to one of the two linear vector fields on $\R^3$
\begin{align*}
X_h = 
\left |
\begin{array}{l}
0 \\
x_2 \\
2x_3
\end{array}
\right.
\text{ or }
X_u =
\left |
\begin{array}{l}
x_2 \\
-x_3 \\
0
\end{array}
\right.
.
\end{align*}
In both cases,  there exists a neighborhood $U$ of $x$ such that $\{y \in U \ | \ Y(y)=0\} = N.x \cap U$ is a one-dimensional submanifold of $U$.
\end{lemma}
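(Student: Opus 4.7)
Pick $\hat x \in \pi^{-1}(x)$ and write $Y_h = \omega_{\hat x}(Y) = Y_0 + Y_1 \in \g_0 \ltimes \p^+$. The dichotomy stated just before the lemma sorts the $\g_0$-component $Y_0$ into two subcases: the \emph{unipotent case}, where $Y_0 \in \so(1,2) \subset \g_0$ is a nonzero nilpotent element with no $A_{\alpha}$-component; and the \emph{hyperbolic case}, where $Y_0 = \lambda(A_{\alpha} \pm A_{\beta})$ with $\lambda \neq 0$. In each subcase, the strategy is first to show that $Y_h$ is $P$-conjugate to $Y_0$ (so that by Proposition \ref{prop:linearization}, $Y$ is locally linearizable at $x$), and then, via the exponential chart $\exp_{\hat x} : \g_{-1} \to U \ni x$, to transport $Y$ to the linear vector field induced by $\ad(Y_0)|_{\g_{-1}}$ on $\R^{1,2} \simeq \g_{-1}$; a basis change then identifies this with $X_u$ or $X_h$.

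In the unipotent case, suppose $Y_h$ is not $P$-conjugate into $\g_0$. Then by Proposition \ref{prop:linearization}, $\phi_Y^t$ is not locally linearizable at $x$, while its $1$-jet is the unipotent element $Y_0 \in \so(1,2)$ acting on $T_xM$. But this is exactly situation (b) of Proposition \ref{prop:abelian_large}, excluded by the running assumption. Hence there exists $p \in P^+$ with $\Ad(p) Y_h = Y_0$; after replacing $\hat x$ by $\hat x \cdot p^{-1}$, we have $Y_h = Y_0 \in \so(1,2)$, and $Y$ pulls back under $\exp_{\hat x}$ to the nilpotent action of $Y_0$ on $\g_{-1}$. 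In a suitable basis of $\g_{-1} \simeq \R^{1,2}$, this is $X_u$.

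In the hyperbolic case, take $Y_0 = \lambda(A_{\alpha} + A_{\beta})$ (the other sign is symmetric). The eigenvalues of $\ad(Y_0)$ on $\p^+ = \g_{\alpha-\beta} \oplus \g_{\alpha} \oplus \g_{\alpha+\beta}$ are $0, \lambda, 2\lambda$, so conjugation by a suitable element of $\exp(\g_{\alpha} \oplus \g_{\alpha+\beta})$ brings $Y_h$ to the form $Y_0 + cX_{\alpha-\beta}$ for some $c \in \R$ and $X_{\alpha-\beta} \in \g_{\alpha-\beta}$. The crucial step is to show $c = 0$: this is where I expect the main obstacle to lie, since $c \neq 0$ produces a mixed semisimple-plus-light-like-translation holonomy not directly covered by case (b) of Proposition \ref{prop:abelian_large}. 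The route I would pursue is to show that $c \neq 0$ implies the existence of a time $t_0 > 0$ and an element $n_0 \in N$ such that $n_0 \cdot \phi_Y^{t_0}$ fixes $x$ with a non-trivial holonomy in $\exp(\p^+)$; Proposition \ref{prop:discrete_holonomy} would then directly give conformal flatness of an open subset of $M$, finishing Proposition \ref{prop:abelian_large} at once. Once $c = 0$ is secured, linearizability holds, and $\ad(Y_0)|_{\g_{-1}}$ has eigenvalues $0, -\lambda, -2\lambda$ on $\g_{-\alpha+\beta} \oplus \g_{-\alpha} \oplus \g_{-\alpha-\beta}$: in an adapted basis this is $-\lambda X_h$, i.e., $X_h$ up to reparametrization of the flow.

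For the final zero-set assertion, both $X_u$ and $X_h$ vanish near the origin of $\R^3$ exactly on the $x_1$-axis, a one-dimensional smooth submanifold. Transporting back through the local conjugacy, $\{Y = 0\} \cap U$ is a one-dimensional submanifold through $x$; since $N$ is abelian, $(N_x)_0$ fixes $N.x$ pointwise, so $N.x \cap U \subseteq \{Y = 0\} \cap U$, and equality follows from both being connected one-dimensional submanifolds through $x$.
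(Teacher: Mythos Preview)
Your treatment of the unipotent case is correct and matches the paper's approach (which simply refers back to the argument for $n\geq 5$).

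The hyperbolic case, however, has a genuine gap. Your proposed route to $c=0$ is both unfinished and circular: to produce an element $n_0\in N\setminus (N_x)_0$ fixing $x$ you would need to know that $N.x$ is a closed light-like circle, but that is precisely what Lemma~\ref{lem:casen=3} is being used to establish (via the subsequent analogue of Lemma~\ref{lem:Norbit_closed}). At this stage you only know $Y\in\n_x$; there is no other element of $N$ available that is known to fix $x$, so the element $n_0\cdot\phi_Y^{t_0}$ you postulate cannot be manufactured.

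The paper's argument avoids this by exploiting the \emph{second} generator of $\n$. Since $\dim\n=2$, pick $X\in\n$ with $X(x)\neq 0$; then $[X,Y]=0$ and $Y(x)=0$ give, via Lemma~\ref{lem:iota_crochet}, $[\iota_{\hx}(X),\iota_{\hx}(Y)]=0$ in $\so(2,3)$. After conjugating so that $Y_0=\diag(1,1,0,-1,-1)$ and $Y_1=Y_{\alpha-\beta}$, the $\g_{-1}$-component of $\iota_{\hx}(X)$ is forced into $\g_{-\alpha+\beta}$ (the zero eigenspace of $\ad(Y_0)$ on $\g_{-1}$). Writing $\iota_{\hx}(X)=X_{-\alpha+\beta}+X_0+X_1$ and reading the $\g_0$-component of the vanishing bracket yields $[X_{-\alpha+\beta},Y_{\alpha-\beta}]+[X_0,Y_0]=0$. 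Now $[X_0,Y_0]\in[\g_0,\g_0]\subset\so(1,2)$, whereas $[X_{-\alpha+\beta},Y_{\alpha-\beta}]$ has a nonzero component on $\R A_{\alpha}$ whenever $Y_{\alpha-\beta}\neq 0$ (since $X_{-\alpha+\beta}\neq 0$). This forces $Y_{\alpha-\beta}=0$, i.e.\ your $c=0$, and linearizability follows. This purely algebraic use of the commutation relation is the missing idea in your proposal.
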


\begin{proof}
If $\d_x \phi_Y^t$ is unipotent,  then the proof of the case $n \geq 5$ can be directly applied.  

So,  we assume that $\d_x \phi_Y^t$ is hyperbolic.  Up to conjugacy in $P$,  we have $\iota_{\hx}(Y) = Y_0 + Y_1$ where
\begin{align*}
Y_0 = 
\begin{pmatrix}
1 & & & & \\
 & 1 & & & \\
 & & 0 & & \\
 & & & -1 & \\
 & & & & -1
\end{pmatrix}
\end{align*}
and $Y_1 \in \p^+$.  After conjugacy by an element of $\exp(\g_{\alpha}\oplus \g_{\alpha+\beta})$,  we can furthermore assume $Y_1 = Y_{\alpha-\beta} \in \g_{\alpha-\beta}$.  Now let $X \in \n$ be such that $X(x) \neq 0$.  Since $[X,Y]=0$ and $Y(x) = 0$,  $[\iota_{\hx}(X),\iota_{\hx}(Y)]  = 0$.  It follows that the  $\g_{-1}$ component of $\iota_{\hx}(X)$ is in $\g_{-\alpha+\beta}$.  Let $\iota_{\hx}(X) = X_{-\alpha+\beta}+X_0+X_1$ be the decomposition according to the grading of $\so(2,3)$.  Expressing that the bracket $[\iota_{\hx}(X),\iota_{\hx}(Y)]=0$,  we obtain $[X_{-\alpha+\beta},Y_{\alpha-\beta}]+[X_0,Y_0]=0$.  This forces $Y_{\alpha-\beta}=0$ because $[\g_0,\g_0]$ is contained in the $\so(1,2)$ factor of $\g_0$ whereas $[\g_{\alpha-\beta},\g_{-\alpha+\beta}]$ is not.  By Proposition \ref{prop:linearization},  $Y$ is locally linearizable near $x$ and conjugate to the linear vector field $\ad(Y_0)|_{\g_{-1}}$.  The latter fixes only the line $\g_{-\alpha+\beta}$,  so the fixed points of $Y$ near $x$ form a $1$-dimensional submanifold near $x$,  which necessarily locally coincides with $N.y$, as claimed.
\end{proof}

It follows,  as in Lemma \ref{lem:Norbit_closed} in the case $n \geq 5$,  that for all $x \in K$,  $N.x$ is closed.   Let $X \in \n$ be such that $X(x) \neq 0$ and let $t_0>0$ be such that $n_0 = e^{t_0X} \in N_x$.  Since $N \simeq \R^2$,  we have $n_0 \notin (N_x)_0$.  

If $(N_x)_0$ has a unipotent linear action near $x$,  then the end of the proof of the case $n \geq 5$ applies.  Let us assume then that we are in the hyperbolic case.  The proof of Lemma \ref{lem:casen=3} shows that there exists $\hx \in \pi^{-1}(x)$ such that $\iota_{\hx}(Y)
 = \diag(1,1,0,-1,-1)$.  Hence,  $\hol^{\hx}(n_0)$ is an element of $P$ centralizing $\iota_{\hx}(Y)$.  It follows that there exists $s_0 \in \R$ such that $\hol^{\hx}(n_0.\phi_Y^{s_0}) \in \exp(\g_{\alpha-\beta})$,  which concludes since $n_0 \notin (N_x)_0$.

\subsubsection{Case of $\R^{n-1}$,  $n=4$}

When $n=4$,  by Lemma \ref{lem:abelian_so1n},  considering the projection $p : \p \to \so(1,3)$ as before,  we get that for all $x \in K$ and $\hx \in \pi^{-1}(x)$,  $p \circ \iota_{\hx}(\n_x)$ is either a restricted root-space,  or a Cartan subalgebra.

As in the previous cases,  we obtain that near every point $x \in K$,  there exists $Y \in \n_x$ which is locally linearizable and whose fixed points locally coincide with $N.x$ (in the unipotent case,  the proof is the same as in the case $n \geq 5$,  in the other case,  we pick $Y \in \n_x$ such that $p \circ \iota_{\hx}(Y)$ is an $\R$-split element in the Cartan subalgebra,  and the proof of the case $n=3$ applies).

It follows similarly that $N.x$ is closed for every $x \in K$.  Let  $t_0>0$ be such that $n_0 = e^{t_0}X \in N_x$.  If the isotropy representation of $(N_x)_0$ at $x$ is unipotent,  then the same approach as in the case $n \geq 5$ implies the existence of some $Y_0 \in \n_x$ and $\hx \in \pi^{-1}(x)$ such that $\hol^{\hx}(n_0e^{Y_0}) \in \exp(\g_{\alpha + \beta})$ (and non-trivial).  If the isotropy representation sends $(N_x)_0$ onto a Cartan subgroup of $\SO(T_xM,g_x)$,  then the same proof as in the case $n=3$ shows that if $Y \in \n_x$ is sent onto the $\R$-split element of the Cartan subalgebra and $Z \in \n_x$ to the imaginary part,  then there exists $\hx \in \pi^{-1}(x)$ such that
\begin{align*}
\iota_{\hx}(Y) = 
\begin{pmatrix}
1 & & & & & \\
& 1 & & & & \\
& & 0 & & & \\
& & & 0 & & \\
& & & & -1 & \\
& & & & & -1
\end{pmatrix}
\text{ and }
\iota_{\hx}(Z) = 
\begin{pmatrix}
0 & & & & & \\
& 0 & & & & \\
& & 0 & -1 & & \\
& & 1  & 0 & & \\
& & & & 0 & \\
& & & & & 0
\end{pmatrix}.
\end{align*}
Expressing that $\hol^{\hx}(n_0)$ centralizes these elements,  we obtain that 
\begin{align*}
\hol^{\hx}(n_0) =
\begin{pmatrix}
\lambda & x_0 & & &  \\
0 & \mu & & & \\
 & & R & & \\
 & & & \mu^{-1} & -x_0\\
 & & & & \lambda^{-1}
\end{pmatrix},
\end{align*}
with $R  \in O(2)$,  $\lambda,\mu \in \R^*$.
Since $\d_x n_0$ fixes the same isotropic vector in $T_xM$ as $Y$,  it follows that $\lambda=\mu$.  So,  replacing $n_0$ by its square,  and translating it by some $e^{s_0Y+s_1Z}$,  we obtain
\begin{align*}
\hol^{\hx}(n_0) =
\begin{pmatrix}
1 & x_0 & & &  \\
0 & 1 & & & \\
 & & \id & & \\
 & & &1 & -x_0\\
 & & & & 1
\end{pmatrix} \in \exp(\g_{\alpha-\beta}),
\end{align*}
as expected.

\section{Proof of Theorem \ref{thm:embedding}: embedding of the radical into $\so(2,n)$}
\label{s:proof_embedding}

We assume in all this section that no non-empty open subset of $(M^n,g)$ is conformally flat.	We denote by $R$ a connected,  solvable Lie subgroup of $\Conf(M,g)$,  which is assumed to be essential.  By Theorem \ref{thm:inessential_nilradical} and Theorem \ref{thm:nilpotent},  $R$ has abelian nilradical $N$ and $N$ is essential.  By Proposition \ref{prop:abelian_large},  $\dim N \leq n-1$ and if $\dim N =n-1$,  then the maximal torus of $N$ is non-trivial.  By Propositions \ref{prop:split_exact_sequence} and \ref{prop:rho_semisimple},  there exists $\a \subset \gr$ abelian and $\rho : \a \to \gl(\n)$ a faithful representation such that for all $X\in \a$,  $\rho(X)$ is semi-simple and $\gr \simeq \a \ltimes_{\rho} \n$.  Let $\{\alpha_1,\bar{\alpha_1},\ldots,\alpha_r,\bar{\alpha_r}\} \subset (\a^*)^{\C}$ be the complex weights of $\rho$.  According to Lemma \ref{lem:sol_like},  there exists real linear forms $\lambda,\mu_1,\ldots,\mu_r \in \a^*$ and $s \in \{0,\ldots,r\}$ such that $\lambda \neq 0$ and 
\begin{itemize}
\item $\alpha_k = \lambda + i \mu_k$ for $1 \leq k \leq s$ ;
\item $\alpha_k = i\mu_k$ for $s+1 \leq k \leq r$,
\end{itemize}
the case where all weights are purely imaginary corresponding to $s=0$.  

We set first some notations.  Let us take the convention that if there exists $k \in \{1,\ldots,s\}$ such that $\mu_k = 0$,  then it is $k=1$,  and if there exists $k \in \{s+1,\ldots,r\}$ such that $\mu_k = 0$,  then it is $k=s+1$.   Let $\n^{\C} =\bigoplus_k (\n^{\C})_{\alpha_k}$ denote the complex weight-space decomposition.  Remark that $\cap_k \ker \alpha_k = \{0\}$  since $\rho$ is faithful (the centralizer of $\n$ in $\gr$ coincides with $\n$).  At most two weight-spaces are real: $(\n^{\C})_{\alpha_1}$ if $\mu_1=0$ and $(\n^{\C})_{\alpha_{s+1}}$ if $\mu_{s+1}=0$,  the latter corresponding to the center of $\gr$.  For a non-real weight $\alpha_k$,  we define the real even-dimensional subspace 
\begin{align*}
\n^k = \{X+\bar{X}, \ X \in (\n^{\C})_{\alpha_k}\} \oplus \{i(X-\bar{X}), \ X \in (\n^{\C})_{\alpha_k}\} \subset \n,
\end{align*}
which verifies $\n^k \oplus i\n^k = (\n^{\C})_{\alpha_k} \oplus \bar{(\n^{\C})_{\alpha_k}}$.  Finally,  let 
\begin{align*}
\n_1 = \bigoplus_{\alpha_k \notin \a^* \cup \,  i \a^*} \n_k\  ,  \quad  
\n_2 = \bigoplus_{ 
 \alpha_k \in i \a^* \setminus \{0\}} \n_k.
\end{align*}
Hence,  we have the decomposition
\begin{align*}
\n = \n_1 \oplus \n_2 \oplus \n_{\lambda} \oplus \z(\gr),
\end{align*}
where $\z(\gr)$ denotes the center of $\gr$ and $\n_{\lambda} = \{X \in \n \ | \ \forall H \in \a, \ [H,X]  = \lambda(H)X\}$.   

\subsection{Case $\dim N \leq n-2$} In this situation,  we have the following explicit embedding of any such solvable Lie algebra $\gr$ into $\so(2,n)$,  which sends the nilradical into $\g_{\alpha} \oplus \g_{\beta}$.  Note that $\dim \g_{\alpha} = \dim \g_{\beta} = n-2$.   To an element $(X,Y)$ with $X \in \a$ and $Y \in \n$ with decomposition $Y = Y_1+Y_2+Y_{\lambda}+Y_{\z}$,  we associate the matrix of $\so(2,n)$:

\begin{align*}
\left (
\begin{array}{cc|ccccc|cc}
\lambda(X) &  &\quad Y_1 \quad  &  \quad Y_{\lambda} \quad  & \quad 0 & \quad 0  & \quad 0 \quad &\\
                     &  0 &      \quad 0        &  \quad 0 & \quad Y_2 \quad & \quad Y_{\z} \quad & \quad 0 &&\\
\hline                     
                     &    & D_1(X) &  & & & &0&- \ \! \!^{t} Y_1 \\
                     &    &               &  \mathbf{0}  & & & & 0&- \ \! \!^{t} Y_{\lambda} \\
                     &   &                     &            & D_2(X)  & & & - \ \! \!^{t} Y_2 &0\\
                     &   &                     &            &                    & \mathbf{0}  &  & - \ \! \!^{t} Y_{\z} &0 \\
                     &   &                     &            &                    &                       & \mathbf{0} & 0&0 \\
\hline                     
                     &   &                     &    &                &                &              &0 &          \\          
                     &   &                     &    &                &                &              &   & -\lambda(X)          
\end{array}
\right )
\end{align*}
where 
\begin{align*}
& D_1(X) =
\begin{pmatrix}
0 & -\mu_1(X) & & &  \\
\mu_1(X) &0 & & &  \\
  &   & \ddots & & \\
  &   &              & 0 & -\mu_{s}(X)  \\
  &   &              & \mu_s(X) & 0  \\
\end{pmatrix}
\\
\text{and } &
D_2(X) =
\begin{pmatrix}
0 & -\mu_{s+1}(X) & & &  \\
\mu_{s+1}X) &0 & & &  \\
  &   & \ddots & & \\
  &   &              & 0 & -\mu_{r}(X)  \\
  &   &              & \mu_r(X) & 0  \\
\end{pmatrix}.
\end{align*}

\subsection{Case $\dim N = n-1$}

Recall that we can assume that $N$ has a non-trivial maximal torus by Proposition \ref{prop:abelian_large} and that $N$ acts without fixed points.  We still denote by $K$ the compact subset of points admitting a one-dimensional light-like $N$-orbit.  

\begin{lemma}
\label{lem:dimN=n-1}
If $\n_1 \neq 0$,  then $\n_2 = 0$ and $\dim \z(\gr) \leq 1$.
\end{lemma}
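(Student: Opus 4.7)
The plan is to apply the Bader--Frances--Melnick embedding theorem at a point of $K$ (non-empty by Theorem \ref{thm:nilpotent}) and then exploit the codimension-one constraint $\dim(\iota_{\hx}(\n) \cap \p) = n-2$ together with the root-space structure of $\so(2,n)$.

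Choose $X_0 \in \a$ with $\lambda(X_0) = 1$, possible since $\n_1 \neq 0$ forces $\lambda \neq 0$. The discompact radical of $S = \{e^{tX_0}\}$ is the one-dimensional $\R$-split group scaling $\n_1 \oplus \n_\lambda$ by $e^t$ and fixing $\a \oplus \n_2 \oplus \z(\gr)$ pointwise. Applying Theorem \ref{thm:bfm} with an $S$-invariant probability supported on $K$, then conjugating $\hx$ inside its fibre, produces a point $x \in K$, a fibre point $\hx$, and an element $A_0$ of the $\R$-split Cartan $\mathfrak{a}_{\so(2,n)}$ fixed in Section \ref{sss:roots} such that $\ad(A_0)$ preserves the linear subspace $\iota_{\hx}(\gr) \subset \so(2,n)$ and acts on it by $+1$ on $\iota_{\hx}(\n_1 \oplus \n_\lambda)$ and by $0$ on $\iota_{\hx}(\a \oplus \n_2 \oplus \z(\gr))$. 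Write $A_0 = a A_\alpha + b A_\beta$.

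I first rule out the root-regular case ($a, b, a+b, a-b$ distinct and non-zero): then the $+1$-eigenspace of $\ad(A_0)$ in $\so(2,n)$ is a single root space $\g_\gamma$ and the $0$-eigenspace is $\mathfrak{a}_{\so(2,n)} \oplus \m$. If $\gamma$ is a positive root of $P$, then $\iota_{\hx}(\n_1 \oplus \n_\lambda) \subset \g_\gamma \subset \p^+$ and $\iota_{\hx}(\n_2 \oplus \z(\gr)) \subset \mathfrak{a}_{\so(2,n)} \oplus \m \subset \g_0$, so $\iota_{\hx}(\n) \subset \p$, contradicting $\dim N \cdot x = 1$; if $\gamma$ is negative, then $\iota_{\hx}(\n_1 \oplus \n_\lambda) \cap \p = 0$, forcing $\dim(\n_1 \oplus \n_\lambda) \leq \dim \n / \n_x = 1$, contradicting $\dim \n_1 \geq 2$. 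Hence $A_0$ lies on a root hyperplane: it is proportional to $A_\alpha$, $A_\beta$, $A_\alpha + A_\beta$, or $A_\alpha - A_\beta$. The case $A_0 \propto A_\alpha$ is eliminated by the same dichotomy, as then the $+1$-eigenspace equals $\p^+$ (if the scalar is positive) or $\g_{-1}^\alpha$ (if negative).

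In the remaining three singular cases ($A_0 \propto A_\beta$, $A_\alpha + A_\beta$, or $A_\alpha - A_\beta$), I decompose the enlarged $+1$-eigenspace, which contains exactly one negative root space of dimension $1$, and the $0$-eigenspace, which contains exactly one negative root space as well. Exactly one direction of $\iota_{\hx}(\n)$ must leave $\p$, and a case-by-case bookkeeping locates this direction either in the $\iota_{\hx}(\n_1 \oplus \n_\lambda)$ part or in the $\iota_{\hx}(\n_2 \oplus \z(\gr))$ part. Using that $\iota_{\hx}(\n_x) \subset \p$ is an abelian Lie subalgebra of $\so(2,n)$ by Lemma \ref{lem:iota_crochet} applied to pairs of vertical lifts, together with the non-degeneracy of the root-space bracket pairings $\g_\gamma \times \g_{-\gamma} \to \mathfrak{a}_{\so(2,n)}$ and $\g_\alpha \times \g_\beta \to \g_{\alpha+\beta}$, one extracts vanishing relations forcing the image of $\n_2 \oplus \z(\gr)$ to have dimension at most $1$. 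Since any non-zero weight contribution to $\n_2$ comes in a complex conjugate pair and hence is even-dimensional, this residual direction cannot lie in $\n_2$, yielding $\n_2 = 0$ and $\dim \z(\gr) \leq 1$. The main obstacle is this final dimension accounting, specifically ruling out that a two-dimensional rotational weight space of $\n_2$ fits into $\mathfrak{a}_{\so(2,n)} \oplus \m \oplus \g_{\pm \gamma'}$ while abelian-commuting with the part of $\iota_{\hx}(\n)$ sitting in the $+1$-eigenspace of $\ad(A_0)$.
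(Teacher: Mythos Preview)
Your approach and the paper's both start by applying Theorem~\ref{thm:bfm} at a point of $K$, but they diverge at the key normalisation step. You conjugate $\hx$ so that the $\R$-split element $A_0$ lies in the fixed Cartan subalgebra of $\so(2,n)$, and then attempt a root-space eigenvalue analysis. The paper instead first disposes of $n<5$ by a direct dimension count (since $\dim\n_1\geq 2$ and $\dim\n=n-1$), and for $n\geq 5$ invokes Lemma~\ref{lem:holonomy_gbeta} to choose $\hx$ so that $\iota_{\hx}(\n_x)=\g_\beta$ exactly. With this in hand, the paper looks at the $\so(1,n-1)$-component $X_0$ of the resulting element $\chech{X}\in\p$, observes it is a nonzero $\R$-split element (using that $\n_1\cap\n_x\neq 0$), and concludes: any nonzero $Y\in(\n_2\oplus\z(\gr))\cap\n_x$ would give $\iota_{\hx}(Y)\in\g_\beta$, a nilpotent element of $\so(1,n-1)$ centralising the $\R$-split $X_0$, which is impossible. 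Hence $(\n_2\oplus\z(\gr))\cap\n_x=0$, so $\dim(\n_2\oplus\z(\gr))\leq 1$, and parity of $\dim\n_2$ finishes.

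Your proposal has a genuine gap. You explicitly flag the final accounting as ``the main obstacle'' and do not carry it out; moreover, your description of the eigenspaces in the singular cases is not accurate. For instance, when $A_0$ is proportional to $A_\beta$, the $+1$-eigenspace is $\g_\beta\oplus\g_{\alpha+\beta}$ (both positive) or $\g_{-\beta}\oplus\g_{-\alpha-\beta}$ (both negative), never ``exactly one negative root space of dimension~$1$''; and the $0$-eigenspace contains $\g_{-\alpha}$, which has dimension $n-2$, so the part of $\iota_{\hx}(\n_2\oplus\z(\gr))$ outside $\p$ is not \emph{a priori} bounded by~$1$ as you need. The abelian constraint on $\iota_{\hx}(\n_x)$ alone does not obviously rule out a two-dimensional $\n_2$ landing in $\mathfrak{a}\oplus\m\oplus\g_{\alpha-\beta}\oplus\g_\alpha$ while commuting with the $\n_1$-part in $\g_\beta\oplus\g_{\alpha+\beta}$. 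The missing idea is precisely Lemma~\ref{lem:holonomy_gbeta}: knowing that $\iota_{\hx}(\n_x)$ is not just some abelian subalgebra of $\p$ but is \emph{equal to} $\g_\beta$ is what collapses the casework into the one-line centraliser argument above.
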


\begin{proof}
The result is immediate if $n<5$.  Let us then assume $n \geq 5$.  Let $X \in \a$ be such that $\lambda(X)=1$.  We apply Theorem \ref{thm:bfm} to the pair $(G,S)$ where $S = \{e^{tX}\}_{t\in \R}$.  Considering a finite measure supported on $K$,  we deduce the existence of $x\in K$ at which the conclusions of this theorem are true.  Similarly as before,  the Zariski closure of $\Ad(S)$ in $\GL(\gr)$ contains an $\R$-split one-parameter subgroup $\{h^t\}$ which acts homothetically on $\n_1 \oplus \n_{\lambda}$ and trivially on $\n_2 \oplus \z(\gr)$.  

Necessarily,  $\{h^t\}$ is contained in the discompact radical of this Zariski closure.  Choose now $\hx \in \pi^{-1}(x)$ such that $\iota_{\hx}(\n_x) = \g_{\beta}$.  Considering the Jordan decomposition of a one parameter subgroup of $\Ad_{\so(2,n)}(P)$ which is sent onto $\{h^t\}$ by the algebraic homomorphism provided by Theorem \ref{thm:bfm},  we get the existence of an $\R$-split one parameter subgroup $\{p^t\} < P$ such that for all $X \in \gr$,  $\Ad(p^{t})\iota_{\hx}(X) = \iota_{\hx}(h^t(X))$.  Let $\chech{X} \in \p$ be the generator of $\{p^t\}$. Then,  we have:
\begin{itemize}
\item $[\chech{X},\iota_{\hx}(Y)] =  \iota_{\hx}(Y)$ for all $Y \in \n_1 \oplus \n_{\lambda}$ ;
\item $[\chech{X},\iota_{\hx}(Y)] =  0$ for all $Y \in \n_2 \oplus \z(\gr)$.
\end{itemize}
Recall that $\p = (\R \oplus \so(1,n-1)) \ltimes \R^n$.  Consider $X_0$ the $\so(1,n-1)$ component of $\chech{X}$.  Then,  $\ad(X_0)$ is still $\R$-split.  It is moreover non-zero because if it was,  it would imply (in particular) that $\chech{X}$ centralizes $\g_{\beta}$,  but this is excluded because $\dim \n_1 \geq 2$,  so it intersects $\n_x$ and there exists $Y \in \n_1$ non-zero such that $\iota_{\hx}(Y) \in \g_{\beta}$ and $[\chech{X},\iota_{\hx}(Y)] = [X_0, \iota_{\hx}(Y)] =  \iota_{\hx}(Y)$.  So,  $X_0$ generates an $\R$-split Cartan subgroup of $\so(1,n-1)$.  Consequently,  the centralizer of $X_0$ in $\so(1,n-1)$ is isomorphic to $\R.X_0 \oplus \so(n-2)$,  and an element of $\so(1,n-1)$ centralizing $X_0$ is semi-simple,  so it cannot belong to a restricted root-space.  

If  $\n_2$ was non-zero,  then,  being at least $2$-dimensional,  it would intersect the hyperplane $\n_x$.  But for $Y \in \n_x \cap \n_2$, we have $\iota_{\hx}(Y) \in \g_{\beta}$ and $[\chech{X},\iota_{\hx}(Y)] = [X_0,\iota_{\hx}(Y)] = 0$,  a contradiction.
\end{proof}

\begin{lemma}
\label{lem:centreNonNul}
$\z(\gr) \neq 0$.
\end{lemma}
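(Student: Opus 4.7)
The plan is to realize the Lie algebra of the maximal torus of $N$ as a subspace of $\z(\gr)$, and to invoke the hypotheses of this section to ensure that this torus is non-trivial.

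First, I would use the standing assumption that no non-empty open subset of $M$ is conformally flat to rule out $N \simeq \R^{n-1}$ via Proposition \ref{prop:abelian_large}: each of its three alternatives (a), (b), (c) produces a conformally flat open subset, contradicting the hypothesis. Since $N$ is a connected abelian Lie group, it decomposes as $N \simeq \R^p \times T^q$ with $p+q = n-1$, so we conclude $q \geq 1$. Let $T \subset N$ denote the maximal torus and $\t \subset \n$ its Lie algebra; thus $\t \neq 0$.

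The key observation is that $T$ is stabilized by the conjugation action of $R$ on $N$. Since $N$ is normal in $R$, this action yields a continuous homomorphism $R \to \Aut(N)$, and as $T$ is the unique (hence characteristic) maximal compact subgroup of the abelian group $N$, composing with the restriction map $\Aut(N) \to \Aut(T)$ gives a continuous homomorphism $\varphi : R \to \Aut(T)$. The automorphism group $\Aut(T) \simeq \GL(q,\Z)$ is discrete, while $R$ is connected, so $\varphi$ is trivial: $R$ centralizes $T$. Differentiating at the identity yields $\ad(X)(Y)=0$ for every $X \in \gr$ and $Y \in \t$, that is, $\t \subset \z(\gr)$, and the desired conclusion $\z(\gr) \neq 0$ follows.

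I do not anticipate any real obstacle: the argument reduces to the classical principle that a continuous homomorphism from a connected Lie group to a discrete group is trivial. The only point deserving a moment's thought is the non-triviality of the maximal torus of $N$, which is immediate from Proposition \ref{prop:abelian_large} combined with the standing non-flatness assumption of this section.
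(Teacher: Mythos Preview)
Your proposal is correct and follows essentially the same approach as the paper: both arguments use that the maximal torus $T$ of $N$ is non-trivial (from Proposition \ref{prop:abelian_large} and the non-flatness hypothesis), that $T$ is characteristic in $N$ hence normal in $R$, and that connectedness of $R$ together with discreteness of $\Aut(T)$ forces $T$ to be central, giving $\t \subset \z(\gr)$. The paper's proof is simply a more compressed version of what you wrote.
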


\begin{proof}
Let $T \simeq \T^k$ denote the maximal torus of the abelian Lie group $N$.  Then,  $T$ is normal in $R$ and since $\Aut(T)$ is discrete,  $T$ is in fact central in $R$ and,  by assumption, $\t \neq 0$.  So, $\z(\gr) \neq 0$.
\end{proof}

We can now conclude by distinguishing several cases.  

\subsubsection{$\n_{\lambda}\neq 0$}

In this situation,  we distinguish a direction in $\n_{\lambda}$ (noted $Y'_{\lambda}$) that we embed into $\g_{\alpha-\beta}$:

\begin{align*}
\left (
\begin{array}{cc|cccc|cc}
\lambda(X) & Y_{\lambda}' & \quad Y_{\lambda}'' \quad  & \quad Y_1 & \quad 0 &  \quad 0 \quad  \quad & & \\
                     &  0 &      \quad 0        &  \quad 0 &  \quad Y_2 \quad  & \quad Y_{\z} \quad   &  & \\
\hline                     
                     &    & \mathbf{0} &  & &   & &- \ \! \!^{t} Y_{\lambda}'' \\
                     &   &  & D_1(X) & & & & - \! \!^{t} Y_1 \\
                     &    &              & &  D_2(X) &   & - \ \! \!^{t} Y_2&\\
                     &   &                        &      &   &  \mathbf{0} &    - \ \! \!^{t} Y_{\z} &  \\
\hline                     
                     &   &                     &                    &                              & & 0 &   -Y_{\lambda}'      \\          
                     &   &                     &                  &                              &   & & -\lambda(X)          
\end{array}
\right )
\end{align*}

\subsubsection{Case $\n_{\lambda} = 0$ and $\n_1 \neq 0$}

Recall that by Lemma \ref{lem:dimN=n-1},  we have $\n_2 = 0$ and the center is contained in a line,  so $\z(\gr))$ is a line by Lemma \ref{lem:centreNonNul}, that we embed into $\g_{\alpha-\beta}$ as follows

\begin{align*}
\left (
\begin{array}{cc|c|cc}
                \lambda(X)     &      \quad Y_{\z}   &   \quad Y_1 \quad   & & \\
                     & \lambda(X)  & &  & \\
\hline                     
                     &    &                D_1(X)    & & - \ \! \!^{t} Y_1\\
\hline                     
                     &                        &                                               & -\lambda(X) & -Y_{\z}    \\          
                     &                    &                                               &     & -\lambda(X)         
\end{array}
\right )
\end{align*}

\subsubsection{Case $\n_{\lambda} = 0$ and $\n_1=0$}
In this last configuration,  we have the embedding
\begin{align*}
\left (
\begin{array}{cc|cc|cc}
                0     &  Y_{\z}' &      \quad Y_{\z}''    &   \quad Y_2 \quad   & & \\
                     & 0  & &  & \\
\hline                     
                     &    & \mathbf{0} &  & & - \ \! \!^{t} Y_{\z}'' \\
                     &    &               &  D_2(X)    & & - \ \! \!^{t} Y_2\\
\hline                     
                     &   &                     &                                               & 0 & -Y_{\z}'    \\          
                     &   &                     &                                               &     & 0         
\end{array}
\right )
\end{align*}

\bibliographystyle{alpha}
\bibliography{bibli}

\begin{thebibliography}{BDRZ23}

\bibitem[AS97a]{adams_stuck}
S.~Adams and G.~Stuck.
\newblock The isometry group of a compact {L}orentz manifold. {I}.
\newblock {\em Invent. Math.}, 129(2):239--261, 1997.

\bibitem[AS97b]{adams_stuckII}
S.~Adams and G.~Stuck.
\newblock The isometry group of a compact {L}orentz manifold. {II}.
\newblock {\em Invent. Math.}, 129(2):263--287, 1997.

\bibitem[BDRZ23]{belraouti_deffaf_raffed_zeghib}
M.~Belraouti, M.~Deffaf, Y.~Raffed, and A.~Zeghib.
\newblock Pseudo-{C}onformal {A}ctions of the {M}öbius {G}roup.
\newblock {\em Preprint arxiv:2305.18868}, 2023.

\bibitem[BFM09]{bader_frances_melnick}
U.~Bader, C.~Frances, and K.~Melnick.
\newblock An embedding theorem for automorphism groups of {C}artan geometries.
\newblock {\em Geom. Funct. Anal.}, 19(2):333--355, 2009.

\bibitem[BN02]{bader_nevo}
U.~Bader and A.~Nevo.
\newblock Conformal actions of simple {L}ie groups on compact
  pseudo-{R}iemannian manifolds.
\newblock {\em J. Differential Geom.}, 60(3):355--387, 2002.

\bibitem[D'A88]{dambra}
G.~D'Ambra.
\newblock {Isometry groups of Lorentz manifolds}.
\newblock {\em Invent. Math.}, 92(3):555--565, 1988.

\bibitem[DG91]{dambra_gromov}
G.~D'Ambra and M.~Gromov.
\newblock Lectures on transformation groups : geometry and dynamics.
\newblock {\em Surveys in {D}ifferential {G}eometry ({C}ambridge, {MA}, 1990)},
  pages 19--111, 1991.

\bibitem[Fer71]{ferrand71}
J.~Ferrand.
\newblock Transformations conformes et quasi-conformes des vari\'et\'es
  riemanniennes compactes.
\newblock {\em M\'em. Acad. Royale Belgique}, 39:1--44, 1971.

\bibitem[Fer96]{ferrand96}
J.~Ferrand.
\newblock {The action of conformal transformations on a Riemannian manifold}.
\newblock {\em Math. Ann.}, 304(2):277--291, 1996.

\bibitem[FM10]{frances_melnick_nilpotent}
C.~Frances and K.~Melnick.
\newblock Conformal actions of nilpotent groups on pseudo-riemannian manifolds.
\newblock {\em Duke Math. J.}, 153(3):511--550, 2010.

\bibitem[FM13]{frances_melnick_normal_forms}
C.~Frances and K.~Melnick.
\newblock Formes normales pour les champs conformes pseudo-riemanniens.
\newblock {\em Bull. Soc. Math. France}, 141(3):377--421, 2013.

\bibitem[FM21]{frances_melnick_lichne-anal3d}
C.~Frances and K.~Melnick.
\newblock The lorentzian lichnerowicz conjecture for real-analytic,
  three-dimensional manifolds.
\newblock {\em Preprint arXiv:2108.07215}, 2021.

\bibitem[Fra05]{frances_ferrand_obata_lorentz}
C.~Frances.
\newblock {Des contre-exemples au théorème de {F}errand-{O}bata en
  géométrie {L}orentzienne conforme}.
\newblock {\em Math. Ann.}, 332(1):103--119, 2005.

\bibitem[Fra07]{frances_rang1}
C.~Frances.
\newblock {Sur le groupe d'automorphismes des géométries paraboliques de rang
  un}.
\newblock {\em Ann. Sci. Ecole Norm. Sup. (4)}, 40(5):741--764, 2007.

\bibitem[Fra12]{frances-localdynamics}
C.~Frances.
\newblock Local dynamics of conformal vector fields.
\newblock {\em Geom. Dedicata}, 158(1):35--59, 2012.

\bibitem[Fra15]{frances_counter_examples}
C.~Frances.
\newblock {About pseudo-Riemannian Lichnerowicz conjecture}.
\newblock {\em Transf. Groups}, 20(4):1015--1022, 2015.

\bibitem[FZ05]{frances_zeghib}
C.~Frances and A.~Zeghib.
\newblock {Some remarks on conformal pseudo-Riemannian actions of semi-simple
  {L}ie groups}.
\newblock {\em Math. Res. Lett.}, 12(1):49--56, 2005.

\bibitem[Gro88]{gromov}
M.~Gromov.
\newblock Rigid transformations groups.
\newblock {\em Géométrie différentielle}, pages 65--139, 1988.

\bibitem[Mat07]{matveev}
V.S. Matveev.
\newblock {Proof of the projective Lichnerowicz-Obata conjecture}.
\newblock {\em J. Diff. Geom.}, 75(3):459--502, 2007.

\bibitem[MP22]{melnick_pecastaing}
K.~Melnick and V.~Pecastaing.
\newblock {The conformal group of a compact simply-connected Lorentzian
  manifold}.
\newblock {\em J. Amer. Math. Soc.}, 32:81--122, 2022.

\bibitem[Oba70]{obata70}
M.~Obata.
\newblock Conformal transformations of riemannian manifolds.
\newblock {\em J. Differential Geom.}, 4, 1970.

\bibitem[Oba71]{obata71}
M.~Obata.
\newblock {The conjectures on conformal transformations of {R}iemannian
  manifolds}.
\newblock {\em J. Differential Geom.}, 6(2):247--258, 1971.

\bibitem[Pec18]{pecastaing_smooth}
V.~Pecastaing.
\newblock {Lorentzian manifolds with a conformal action of {$\SL(2,\R)$}}.
\newblock {\em Comment. Math. Helvetici.}, 93(2):401--439, 2018.

\bibitem[Pec19]{pecastaing_rang1}
V.~Pecastaing.
\newblock Conformal actions of real-rank 1 simple lie groups on
  pseudo-riemannian manifolds.
\newblock {\em Transf. Groups}, 24(4):1213--1239, 2019.

\bibitem[Pec22]{pecastaing_conformal_projective}
V.~Pecastaing.
\newblock {Projective and conformal closed manifolds with a higher-rank lattice
  action}.
\newblock {\em To appear in Math. Ann.}, 2022.

\bibitem[Sch95]{schoen}
R.~Schoen.
\newblock {On the conformal and {CR} automorphism groups}.
\newblock {\em Geom. Funct. Anal.}, 5(2):464--481, 1995.

\bibitem[Sha97]{sharpe}
R.W. Sharpe.
\newblock {\em Differential Geometry. Cartan's Generalization of Klein's
  Erlangen Program}.
\newblock Graduate Texts in Mathematics, Vol. 166. Springer-Verlag New York,
  1997.

\bibitem[Sha99]{shalom}
Y.~Shalom.
\newblock {Invariant measures for algebraic actions, Zariski dense subgroups
  and Kazhdan's property (T)}.
\newblock {\em Trans. Amer. Math. Soc.}, 351(8):3387--3412, 1999.

\bibitem[Zeg98a]{zeghib98}
A.~Zeghib.
\newblock The identity component of the isometry group of a compact {L}orentz
  manifold.
\newblock {\em Duke Math. J.}, 129(2):321--333, 1998.

\bibitem[Zeg98b]{zeghib_espaces_temps_homogenes}
A.~Zeghib.
\newblock Sur les espaces-temps homogènes.
\newblock {\em Geom. Topol.}, 1:551--576, 1998.

\bibitem[Zim87]{zimmer87}
R.J. Zimmer.
\newblock {Split rank and semisimple automorphism groups of {G}-structures.}
\newblock {\em J. Differential Geom.}, 26(1):169--173, 1987.

\end{thebibliography}

\end{document}